\definecolor{paired1}{HTML}{a6cee3}
\definecolor{paired2}{HTML}{1f78b4}
\definecolor{paired3}{HTML}{b2df8a}
\definecolor{paired4}{HTML}{33a02c}
\definecolor{paired5}{HTML}{fb9a99}
\definecolor{paired6}{HTML}{e31a1c}
\definecolor{paired7}{HTML}{fdbf6f}
\definecolor{paired8}{HTML}{ff7f00}
\definecolor{paired9}{HTML}{cab2d6}
\definecolor{paired10}{HTML}{6a3d9a}
\pgfplotsset{select coords between index/.style 2 args={
		x filter/.code={
			\ifnum\coordindex<#1\def\pgfmathresult{}\fi
			\ifnum\coordindex>#2\def\pgfmathresult{}\fi
		}
}}
\pgfplotsset{
	log x ticks with fixed point/.style={
		xticklabel={
			\pgfkeys{/pgf/fpu=true}
			\pgfmathparse{exp(\tick)}%
			\pgfmathprintnumber[fixed relative, precision=3]{\pgfmathresult}
			\pgfkeys{/pgf/fpu=false}
		}
	},
	log y ticks with fixed point/.style={
		yticklabel={
			\pgfkeys{/pgf/fpu=true}
			\pgfmathparse{exp(\tick)}%
			\pgfmathprintnumber[fixed relative, precision=3]{\pgfmathresult}
			\pgfkeys{/pgf/fpu=false}
		}
	}
}
\newcommand{\bigo}[1]{\mathcal{O}(#1)}
\let\grad\undefined
\let\curl\undefined
\let\div\undefined
\let\d\undefined
\let\tr\undefined
\DeclareMathOperator{\grad}{grad}
\DeclareMathOperator{\curl}{curl}
\DeclareMathOperator{\div}{div}
\DeclareMathOperator{\d}{d}
\DeclareMathOperator{\tr}{tr}
\DeclareMathOperator{\Exists}{\exists}
\DeclareMathOperator{\Forall}{\forall}
\newcommand{\Hgrad}{H(\grad)}
\newcommand{\Hcurl}{H(\curl)}
\newcommand{\Hdiv}{H(\div)}
\newcommand{\Ltwo}{L^2}
\newcommand{\mesh}{\mathcal{T}_h}
\newcommand{\pullback}{\mathcal{F}}
\renewcommand{\vec}[1]{\mathbf{#1}}
\DeclareMathOperator{\diag}{diag}
\DeclareMathOperator{\spn}{span}
\newcommand{\bu}{\vec{u}}
\newcommand{\bn}{\vec{n}}
\newcommand{\bt}{\vec{t}}
\renewcommand{\bf}{\vec{f}}
\newcommand{\bx}{\vec{x}}
\newcommand{\Khat}{\hat{K}}
\newcommand{\That}{\hat{T}}
\renewcommand{\P}{\mathrm{P}}
\newcommand{\CG}{\mathrm{CG}}
\newcommand{\DG}{\mathrm{DG}}
\newcommand{\Ned}{\mathrm{Ned}^{1}}
\newcommand{\RT}{\mathrm{RT}}
\newcommand{\BDM}{\mathrm{BDM}}
\newcommand{\NedTwo}{\mathrm{Ned}^{2}}
\newcommand{\rmone}{\mathrm{I}}
\newcommand{\rmtwo}{\mathrm{II}}
\newcommand{\interior}{\mathcal{I}}
\newcommand{\interface}{\Gamma}
\newcommand{\pafw}[2]{\mathrm{PAFW}_{#1}({#2})}
\newcommand{\ph}[2]{\mathrm{HT}({#1}, {#2})}
\newcommand{\jacobi}[1]{\mathrm{J}({#1})}
\newcommand{\Nedelec}{Ned\'el\'ec}
\crefname{equation}{}{}
\Crefname{equation}{Equation}{Equations}
\crefname{lemma}{Lemma}{Lemmas}
\crefname{theorem}{Theorem}{Theorems}
\crefname{corollary}{Corollary}{Corollaries}
\crefname{figure}{Figure}{Figures}
\Crefname{figure}{Figure}{Figures}
\crefname{table}{Table}{Tables}
\Crefname{table}{Table}{Tables}
\crefname{remark}{Remark}{Remarks}
\newtheorem{theorem}{Theorem}[section]
\newtheorem{lemma}[theorem]{Lemma}
\theoremstyle{definition}
\theoremstyle{remark}
\newtheorem{remark}[theorem]{Remark}
\theoremstyle{corollary}
\newtheorem{corollary}[theorem]{Corollary}
\numberwithin{equation}{section}
\begin{document}

\title[Fast solvers for simplicial de Rham]{Fast solvers for the 
high-order FEM simplicial de Rham complex}


\author[P. D. Brubeck]{Pablo D.\ Brubeck}
\address{Mathematical Institute,
	University of Oxford,
	Oxford, UK }
\email{brubeckmarti@maths.ox.ac.uk}
\thanks{PBD and PEF were supported by EPSRC grant EP/W026163/1 and 
the UKRI Digital Research Infrastructure Programme through the Science 
	and Technology Facilities Council's Computational Science Centre for 
	Research Communities (CoSeC).}

\author[P.E. Farrell]{Patrick E.\ Farrell}
\address{Mathematical Institute,
	University of Oxford,
	Oxford, UK}
\email{patrick.farrell@maths.ox.ac.uk}
\thanks{PEF was additionally supported by the Donatio Universitatis
	Carolinae Chair ``Mathematical modelling of multicomponent systems''.}

\author[R. C. Kirby]{Robert C.\ Kirby}
\address{Department of Mathematics,
	Baylor University,
	Waco, TX, US}
\email{robert\_kirby@baylor.edu}

\author[C. Parker]{Charles Parker}
\address{Ridgway Scott Foundation,
2544 S Forest Ln, Cedarville, MI}
\email{charles\_parker@alumni.brown.edu}
\thanks{CP acknowledges this material is based upon work supported by the 
National Science Foundation under Award No.~DMS-2201487}

\subjclass[2020]{Primary 65F08, 65N35, 65N55}

\date{\today}


\begin{abstract}
	We present new finite elements for solving the Riesz maps of the de Rham 
	complex on triangular and tetrahedral meshes at high order. The finite 
	elements discretize the same spaces as usual, but with different basis 
	functions, so that the resulting matrices have desirable properties. 
	These properties mean that we can solve the Riesz maps to a given 
	accuracy in a $p$-robust number of iterations with $\mathcal{O}(p^6)$ 
	flops in three dimensions, rather than the na\"ive $\mathcal{O}(p^9)$ 
	flops.
	
	The degrees of freedom build upon an idea of Demkowicz et al.,
	and consist of integral moments on an equilateral reference simplex with 
	respect to a numerically computed polynomial basis that is orthogonal in 
	two different inner products. As a result, 
	the interior-interface and interior-interior couplings are provably weak, 
	and we devise a preconditioning strategy by neglecting them.
	The combination of this approach with a space decomposition method on 
	vertex and edge star patches allows us to efficiently solve the canonical 
	Riesz maps at high order.
	We apply this to solving the Hodge Laplacians of the de Rham complex with 
	novel augmented Lagrangian preconditioners.
\end{abstract}

\maketitle



\section{Introduction} \label{sec:introduction}

High-order finite element discretizations have very attractive properties, 
such as rapid convergence and high arithmetic intensity. However, their 
implementation requires great care, as na\"ive approaches to operator 
evaluation and linear system solution can scale badly in terms of memory and 
floating-point operations (flops) as the polynomial degree $p$ increases. On 
a single $d$-dimensional cell, the number of degrees of freedom scales like 
$p^d$; computing the $\mathcal{O}(p^{2d})$ entries of a dense stiffness 
matrix with standard Lagrange-type elements would cost $\mathcal{O}(p^{3d})$ 
flops. Making high-order discretizations competitive therefore requires 
better algorithms.

For tensor-product cells (quadrilaterals and hexahedra), a tensor-product 
basis naturally exposes the required structure to decouple multidimensional 
problems into a sequence of one-dimensional problems, reducing computational 
cost and storage. For example, the sum-factorization assembly strategy 
enables fast matrix-free operator evaluation in $\mathcal{O}(p^{d+1})$ 
operations and $\mathcal{O}(p^{d})$ storage~\cite{orszag80}, and the fast 
diagonalization method (FDM) addresses the solution of separable problems on 
structured domains with the same complexities~\cite{lynch64}. The previous 
work of the first two authors employing FDM-inspired finite element bases 
with suitable space decompositions yields the solution of certain separable 
and non-separable problems on unstructured domains with the same 
complexities~\cite{brubeck22,brubeck24}. In particular, these complexities 
are achieved on tensor-product cells for the solution of the following 
partial differential equations (PDEs), posed on a bounded Lipschitz domain 
$\Omega \subset \mathbb{R}^d$ with $d = 3$ for concreteness:
\begin{alignat}{5}
	\beta u - \nabla \cdot \left(\alpha \nabla u\right) &= f 
	\mbox{~in~}\Omega, \quad
	&u &= 0 \mbox{~on~} \Gamma_D, \quad
	&\alpha\nabla u\cdot\bn &= 0 \text{~on~} \Gamma_N; \label{eq:hgrad}\\
	\beta\bu + \nabla\times \left( \alpha \nabla\times \bu\right) &= \bf 
	\mbox{~in~} \Omega,
	&\bu\times\bn &= 0 \mbox{~on~} \Gamma_D, \quad
	&\alpha\nabla \times \bu \times \bn &= 0 \text{~on~} \Gamma_N; 
	\label{eq:hcurl}\\
	\beta\bu - \nabla \left(\alpha \nabla\cdot\bu\right) &= \bf \mbox{~in~} 
	\Omega, \quad
	&\bu\cdot\bn &= 0 \mbox{~on~} \Gamma_D, \quad
	&\alpha\nabla\cdot\bu &= 0 \text{~on~} \Gamma_N. \label{eq:hdiv}
\end{alignat}
Here $\alpha,\beta: \Omega \to \mathbb{R}_+$ are given coefficients, $f: 
\Omega \to \mathbb{R}$ and $\bf: \Omega \to \mathbb{R}^3$ are given source 
functions, $\Gamma_D \subseteq \partial \Omega$, and $\Gamma_N = \partial 
\Omega \setminus \Gamma_D$.
For $\alpha = \beta = 1$, these equations are the so-called \emph{Riesz maps} 
associated with subsets of the spaces
$H(\grad, \Omega) = H^1(\Omega)$, $H(\curl, \Omega)$, and $H(\div, \Omega)$, 
respectively.
For brevity we shall write $\Hgrad = H(\grad, \Omega)$ etc.~where there is no 
potential confusion.
These equations \cref{eq:hgrad,eq:hcurl,eq:hdiv} often arise as subproblems 
in the construction of preconditioners for more complex systems involving 
solution variables in $\Hgrad$, $\Hcurl$, and 
$\Hdiv$~\cite{hiptmair06,kirby10,mardal11,malek14}.

Achieving $\mathcal{O}(p^{d+1})$ operations and $\mathcal{O}(p^{d})$ storage 
on simplices is more challenging.
Bases constructed from collapsed-coordinate mappings~\cite{karniadakis13} 
give one possible approach, allowing a similar sum-factorization evaluation 
strategy.
Perhaps surprisingly, Bernstein--B\'ezier expansions also admit 
sum-factorized matrix-free operator evaluation in $\mathcal{O}(p^{d+1})$ 
operations and $\mathcal{O}(p^{d})$ storage~\cite{ainsworth11, 
kirby2011fast}, and such techniques have been generalized to the entire de 
Rham complex~\cite{ainsworth2018bernstein, kirby2014low}. For the $\Hgrad$ 
mass matrix ($\alpha = 0$) in 2D, there are $\mathcal{O}(p^{3})$ solvers 
available \cite{Ainsworth2019p3}. However, there is no known solver for the 
linear systems arising from FEM discretizations of 
\cref{eq:hgrad,eq:hcurl,eq:hdiv} with these complexities. This manuscript 
makes a step towards addressing this challenge.

The essential idea of our approach is to devise new finite elements for the 
$L^2$ de Rham complex with favorable properties for solvers. With the basis 
functions we propose, the interior and interface degrees of freedom are 
provably weakly coupled. This motivates the use of block preconditioning 
strategies that decouple the interface and interior degrees of freedom. The 
degrees of freedom yielding this weak coupling are designed within the 
framework of Demkowicz et al.~\cite{demkowicz00}, by making a clever choice 
for bases of bubble spaces that arise in the construction of the degrees of 
freedom. These bases for the bubble spaces are constructed by solving a 
handful of offline eigenproblems on the reference cell for each polynomial 
degree $p$. This combination of finite elements and block preconditioning 
strategy solves the Riesz maps of the $L^2$ de Rham complex in 
$\mathcal{O}(p^{3(d-1)})$ operations and $\mathcal{O}(p^{2(d-1)})$ storage, 
worse than the optimal complexities achieved on tensor-product cells, but 
orders better than the na\"ive approach. This is borne out in numerical 
experiments, as shown in \Cref{fig:complexities}.

\begin{figure}
	\centering
	\input{./figures/plot_complexity.tex}
	\caption{
		Flop counts, peak memory usage, and nonzeros in the sparse matrices 
		and patch factors in 
		the solution of the Riesz maps ($\alpha = \beta = 1$) on a 
		Freudenthal mesh with 3 cells in each direction.
	}
	\label{fig:complexities}
\end{figure}

Several other works have addressed the same problem of solving the Riesz maps 
at high order. 
For \cref{eq:hgrad}, \v{S}ol\'{i}n and Vejchodsk\'{y} \cite{Solin08} 
introduced the idea of using generalized eigenfunctions as interior basis 
functions, which we also employ in our construction, but they do not address 
how to construct efficient solvers. 
Also for \cref{eq:hgrad}, Casarin and Sherwin 
\cite{Sherwin2001low} constructed nonoverlapping Schwarz 
methods for the Schur complement system requiring exact interior solves;
however, these interior solves require $\mathcal{O}(p^{3d})$ operations.
Low-order refined preconditioning is provably effective on 
tensor-product cells \cite{pazner22}, but the extension to simplicial cells 
is 
delicate and has only been studied for \cref{eq:hgrad} on triangles 
\cite{chalmers2018}. 
Beuchler and coauthors construct sparse hierarchical 
bases for discretizations of the de Rham complex using Jacobi polynomials 
\cite{beuchler2007sparse,Beuchler12,Beuchler12hdiv,Beuchler13}; this approach 
yields sparser matrices than ours, but the bases here concentrate the nonzero 
entries in the interface block, enabling the efficient interior-interface 
space decomposition described in 
\cref{thm:generic-subspace-decomp-interface-interior-split} below.

A disadvantage of our approach is that the resulting preconditioners are not 
parameter-robust in $\alpha$ and $\beta$. We design elements suitable for the 
stiffness-dominated case, so that the preconditioner is provably robust for 
$\beta/\alpha \in [0, 1)$. This is the more difficult case of wider interest 
(e.g.~arising in augmented Lagrangian preconditioning). We anticipate that it 
is possible to design a basis for the mass-dominated case ($\beta/\alpha \gg 
1$), but with very different properties.

\section{Finite element discretization}

Let $\Omega \subset \mathbb{R}^d$ be a bounded Lipschitz domain with boundary 
$\Gamma \coloneqq \partial \Omega$.
We denote the spaces consisting of $k$-forms as $H(\d^k, \Omega) = \{v\in 
L^2(\Omega) : \d^k v \in L^2(\Omega)\}$, 
where $\d^k$ denotes the exterior derivative on $k$-forms ($\d^0 = \grad$, 
$\d^1 
= \curl$, $\d^2 = \div$, and $\d^3 = 0$).
The trace operator on theses spaces $\tr^k_{\Gamma} : H(\d^k, \Omega) \to 
H^{-1/2}(\Gamma)$ is defined by
\begin{equation}
	\label{eq:tr-k}
	\tr^k_{\Gamma} v \coloneqq \begin{cases}
		v|_{\Gamma} & \text{if } k = 0, \\
		\Pi_{\Gamma} v|_{\Gamma} & \text{if } k = 1, \\
		\bn \cdot v|_{\Gamma} & \text{if } k = 2, 
	\end{cases}
\end{equation}
where $\Pi_\Gamma$ denotes the projection of a
vector onto the tangent space of $\Gamma$. The trace operator is continuous, 
but not surjective if $k < 2$ \cite{boffi13}.

On a shape-regular, conforming simplicial mesh of $\Omega$, denoted by 
$\mesh$,
we consider well-known finite element subcomplexes of the $\Ltwo$ de Rham 
complex.
For concreteness, we will restrict our discussion to the case $d=3$:
\begin{figure}[H] 
	\centering
	\begin{tikzcd}
		\CG_p \arrow[r, "\grad"] & \Ned_p \arrow[r, "\curl"] & 
		\RT_p \arrow[r, "\div"] & \DG_{p-1} \\
		\Hgrad \arrow[r, "\grad"] \arrow[u] \arrow[d]  & \Hcurl \arrow[r, 
		"\curl"]
		\arrow[d] \arrow[u] & \Hdiv \arrow[r, "\div"] \arrow[u] \arrow[d] & 
		\Ltwo \arrow[u] \arrow[d] \\
		\CG_p \arrow[r, "\grad"] & \NedTwo_{p-1} \arrow[r, "\curl"] & 
		\BDM_{p-2} \arrow[r, "\div"] & \DG_{p-3}
	\end{tikzcd}
	\caption{The $\Ltwo$ de Rham complex in three dimensions (middle), and 
	the finite element subcomplexes of the first (above) and second (below) 
	kinds.}
	\label{fig:3DL2deRham}
\end{figure}
\noindent
The top and bottom row of \cref{fig:3DL2deRham} correspond to the 
subcomplexes of first and second kinds, respectively.
In particular, $\CG_p$ and  $\DG_p$  denote the usual spaces of continuous 
and discontinuous piecewise polynomials,
$\Ned_p$~\cite{nedelec80} and $\NedTwo_p$~\cite{ned-second-element} 
correspond to the $\Hcurl$-conforming spaces of the first and second kinds, 
and
$\RT_p$~\cite{rt-element,nedelec80} and 
$\BDM_p$~\cite{brezzi1985,ned-second-element} correspond to the 
$\Hdiv$-conforming spaces of the first and second kinds. One can also form a 
hybrid complex by interchanging kinds as in \cref{fig:3DL2deRham-hybrid}:
\begin{figure}[H]
	\centering
	\begin{tikzcd}
		\CG_p \arrow[r, "\grad"] & \Ned_p \arrow[r, "\curl"] & 
		\BDM_{p-1} \arrow[r, "\div"] & \DG_{p-2} \\
		\CG_p \arrow[r, "\grad"] & \NedTwo_{p-1} \arrow[r, "\curl"] & 
		\RT_{p-1} \arrow[r, "\div"] & \DG_{p-1}
	\end{tikzcd}	
	\caption{Hybrid subcomplexes of the $L^2$ de Rham complex.}
	\label{fig:3DL2deRham-hybrid}		
\end{figure}

To unify our discussion, we employ the same notation $X^k_p(\That)$ to denote 
the 
discrete spaces of $k$-forms of the first or
second kind on a reference equilateral
simplex $\That$. Here, $p$ denotes the degree of the polynomial spaces. In 
particular, for the first kind, we have
\begin{equation}
	\label{eq:first-kind-def}
	X^k_p(\That) \coloneqq \left\{ \begin{array}{rlll}
		\CG_p(\That) &\coloneqq& \P_{p}(\That) & \text{if } k = 0, \\
		\Ned_p(\That) &\coloneqq& \P_{p-1}(\That)^3 + \P_{p-1}(\That)^3\times 
		\bx 
		& \text{if } k = 1, \\
		\RT_p(\That) &\coloneqq& \P_{p-1}(\That)^3 + \P_{p-1}(\That) \bx & 
		\text{if } k = 2, \\
		\DG_{p}(\That) &\coloneqq& \P_{p}(\That) & \text{if } k = 3, 
	\end{array}\right.
\end{equation}
while for the second kind we have
\begin{equation}
	\label{eq:second-kind-def}
	X^k_p(\That) \coloneqq \left\{ \begin{array}{rlll}
		\CG_p(\That) &\coloneqq& \P_{p}(\That) & \text{if } k = 0, \\
		\NedTwo_{p}(\That) &\coloneqq& \P_{p}(\That)^3 & \text{if } k = 1, \\
		\BDM_{p}(\That) &\coloneqq& \P_{p}(\That)^3 & \text{if } k = 2, \\
		\DG_{p}(\That) &\coloneqq& \P_{p}(\That) & \text{if } k = 3.
	\end{array}\right.
\end{equation}
We define the global finite element space $X^k_p(\mesh)$ 
in terms of the space on the reference cell $X^k_p(\That)$ via the 
standard pullback:
\begin{equation}
	X^k_p(\mesh) \coloneqq \{ v \in H(\d^k, \Omega): \Forall T \in \mesh 
	\ \Exists \hat{v} \in X^k_p(\That) \text{ s.t.} \left.v\right|_T = 
	\pullback^k_T(\hat{v}) \},
\end{equation}
where for a cell $T \in \mesh$, the pullback $\pullback^k_T : H(\d^k, \That) 
\to 
H(\d^k, T)$ is defined by:
\begin{equation} \label{eq:pullback-3d-def}
	\pullback^k_{T}(\hat{v}) \coloneqq \begin{cases}
		\hat{v} \circ F_T^{-1} & \text{if } k = 0, \\
		J_T^{-\top} \hat{v} \circ F_T^{-1} & \text{if } k = 1, \\
		(\det J_T)^{-1} J_T \hat{v} \circ F_T^{-1} & \text{if } k = 2, \\
		(\det J_T)^{-1} \hat{v} \circ F_T^{-1} & \text{if } k = 3,
	\end{cases}
\end{equation}
where $F_T : \That \to T$ is a fixed bijective mapping from the reference 
cell 
$\That$ to the physical cell $T$
and $J_T$ denotes its Jacobian matrix. 
Denoting by $\Delta_l(\mesh)$ the set of $l$-dimensional subsimplices of 
$\mesh$, 
we note that a discrete $k$-form
$v \in X^k_p(\mesh)$ and any subset $S \subset \Delta_l(\mesh)$ with $l = 
k:d-1$, 
the trace $\tr^k_{S} v$, given by formula \cref{eq:tr-k} with $\Gamma$
replaced by $S$ is well-defined (see e.g.\ \cite[Lemma 5.1]{Arnold2006}). We 
also define the trivial trace 
$\tr^k_S$ for a $d$-dimensional
subset $S \subset \mesh$ by $\tr^k_S v = v|_{S}$.

With these spaces, the discrete weak formulation of problems 
\cref{eq:hgrad,eq:hcurl,eq:hdiv} is to find $u \in X^k_{p,D}(\mesh)$ such that
\begin{equation} \label{eq:weak-form}
	a^k(u, v) \coloneqq (u, \beta v) + (\d^k u, \alpha \d^k v) = F(v) \qquad
	\Forall v \in X^k_{p,D}(\mesh),
\end{equation}
where $X^k_{p,D}(\mesh)$ is the subset of $X^k_p(\mesh)$ with zero trace on 
$\Gamma_D$:
\begin{equation}
	X^k_{p,D}(\mesh) \coloneqq \{u \in X^k_p(\mesh) : \tr_{\Gamma_D}^k u = 0 
	\}.
\end{equation}
For standard bases for $X^k_{p,D}(\mesh)$, the matrix corresponding to 
\cref{eq:weak-form} will have dense $\bigo{p^d} \times \bigo{p^d}$ 
blocks.

\section{Orthogonality-promoting discretizations} \label{sec:dofs}

We wish to construct bases for $X^k_{p}(\mesh)$ that enable fast solvers for 
problem 
\cref{eq:weak-form} at high order. In particular,
we wish to promote orthogonality of the basis in the $\Ltwo$ and $H(\d^k)$ 
inner 
products. However, constructing such 
bases would involve the
solution of global eigenproblems, which is not appealing. Instead, we devise 
bases so that the \emph{interior} basis functions are orthogonal in
these inner products on the reference cell. This will ensure that the 
interior-interior block of the mass and stiffness matrices associated with
the $\Ltwo(\That)$ and $H(\d^k, \That)$ inner products are diagonal on the 
reference cell.

To form bases with this interior orthogonality property, we follow the 
definition of interpolation degrees of freedom from
Demkowicz et al.~\cite{demkowicz00}. Demkowicz et al.~employed these degrees 
of freedom to define interpolation operators with commuting diagram 
properties; here, we will use them to define finite elements, through the 
Ciarlet
dual basis construction~\cite{ciarlet1978}. In particular, given a set of 
degrees 
of freedom $\{ \ell_j \}$, the basis $\{ \phi_j \}$ is constructed so that 
$\ell_j(\phi_i) = \delta_{ij}$. Although a basis for 
$p$-version methods is typically constructed 
directly and without reference to a set of degrees of freedom, having an 
explicit set of degrees of freedom facilitates the implementation of 
the method in some finite element packages such as Firedrake \cite{firedrake}.
In \cref{rem:harmonic-extension-basis}, we briefly describe an equivalent 
approach that is more in line with a $p$-version construction 
and show that the basis is recursive in dimension.

For the remainder of the manuscript, we select one of the discrete 
subcomplexes in \cref{fig:3DL2deRham,fig:3DL2deRham-hybrid} and drop the 
subscript ``$p$" so that the complexes read
\begin{equation}\label{eq:abstract-complex}
	\begin{tikzcd}[ampersand replacement=\&]
		X^0(\That)  \arrow[r, "\d^{0}"] \& X^{1}(\That) \arrow[r, "\d^1"] \& 
		\cdots \arrow[r, "\d^{d-1}"] \& X^{d}(\That) \arrow[r] \& 0.
	\end{tikzcd}
\end{equation}
In particular, $X^k(\That)$ may be a first or second kind space provided that
\cref{eq:abstract-complex} is exact. Although we will first proceed in this 
abstract 
setting, the degrees of freedom for each of the spaces in
\cref{eq:first-kind-def,eq:second-kind-def} and the corresponding spaces in 2D
are listed explicitly in \cref{sec:concrete-dofs}. 

\subsection{Degrees of freedom with commuting diagram properties}

Let $k \in 0:d-1$ be fixed. The bubble spaces 
on a cell subentity $S \in \Delta_l(\That)$ 
with $l \in k+1:d$, which play a key role in the degrees of freedom 
in~\cite{demkowicz00}, are defined by
\begin{equation}
	\mathring{X}^k(S) \coloneqq \left\{\tr_S^k v : v \in X^k(\That) \text{ 
	and } 
	\tr_{\partial S}^{k} v  = 0
	\right\}.
\end{equation}
We will later construct bases for these bubble spaces so as to promote 
sparsity.
We also define differential operators on the trace subspace $\tr_S^k 
X^k(\That)$ 
for $S \in \Delta_l(\That)$ with $l \in k+1:d-1$, as follows:
\begin{align}
	\d_S^0 v &\coloneqq \grad_S v \quad \text{and} \quad \d_S^1 v \coloneqq 
	\curl_S v,
\end{align}
where $\grad_S$ denotes the tangential surface gradient taking values in 
$\mathbb{R}^l$  
and $\curl_S$ is the surface curl/rot operator taking values in $\mathbb{R}$.
Crucially, these operators
are defined so that
\begin{alignat}{2}
	\label{eq:trace-ds-commute}
	\d_S^k \tr_S^k v &= \tr_S^{k+1} \d^k v \qquad & &\Forall v \in 
	X^k(\That).
\end{alignat}	

The commuting interpolation procedure given in \cite{demkowicz00,zaglmayr06} 
is based on the following linear functionals
\begin{subequations} 
	\begin{alignat}{3}
		\label{eq:demkowicz-whitney-dofs}
		&(q, \tr^k_S v)_{S}, \qquad & &\Forall S \in \Delta_k(\That), \ & 
		&\Forall q 
		\in \tr^k_S X^k(\That),\\
		\label{eq:demkowicz-type-I-dofs}
		&(\d_S^k q, \d^k_S \tr^k_S v)_{S} \qquad & &\Forall S \in 
		\bigcup_{l=k+1}^d 
		\Delta_l(\That), \ & &\Forall q \in  \mathring{X}^k(S), \\
		\label{eq:demkowicz-type-II-dofs}
		&(\d_S^{k-1} q, \tr^k_S v)_{S} \qquad & &\Forall S \in 
		\bigcup_{l=k+1}^d 
		\Delta_l(\That), \ & &\Forall q \in  \mathring{X}^{k-1}(S),
	\end{alignat}
\end{subequations}
which we will use to construct a set of degrees of freedom by fixing bases 
for each of the subspaces above. 

\subsection{Local space decomposition}

We observe that in the linear functionals
\cref{eq:demkowicz-type-I-dofs,eq:demkowicz-type-II-dofs},
only $\d^k_S q$ and $\d^{k-1}_{S} q$ appear, and so the construction of a 
unisolvent 
set of degrees of freedom requires finding minimal sets $\{q \}$ such 
that $\{\d^k_S q\}$ forms a basis for $\d^k_S \mathring{X}^{k}(S)$. Since 
$\d^k_S$ may have a nontrivial kernel, it will be
convenient to characterize this kernel. This characterization can be readily 
achieved by
considering the bubble subcomplex for 
$S \in \Delta_l(\That)$ with $l \in k+1:d$: 
\begin{equation}\label{eq:bubble-subcomplex}
	\begin{tikzcd}[ampersand replacement=\&]
		\mathring{X}^{k-1}(S) \arrow[r, "\d^{k-1}_S"] \& \mathring{X}^{k}(S) 
		\arrow[r, "\d^k_S"] \& 
		\d^k_S \mathring{X}^{k}(S) \arrow[r] \& 0.
	\end{tikzcd}
\end{equation}
We define the subspaces
\begin{subequations}
	\begin{alignat}{2}
		\label{eq:type-I-bubble}
		\mathring{X}^{k, \rmone}(S) &\coloneqq \left\{ v \in 
		\mathring{X}^{k}(S) : (v, w)_{S} = 0 \ \Forall w \in \ker(\d_S^{k}; 
		\mathring{X}^{k}) \right\}, \\
		\label{eq:type-II-bubble}
		\mathring{X}^{k, \rmtwo}(S) &\coloneqq 
		\ker(\d_S^{k}; \mathring{X}^{k}),
	\end{alignat} 	
\end{subequations}
Since the complex \cref{eq:bubble-subcomplex} is exact (see 
e.g.~\cite[Corollary 2]{Licht2017complexes}), the kernel of $\d_S^k$ 
acting on $\mathring{X}^{k}(S)$ is simply $\d^{k-1}_S \mathring{X}^{k-1}(S)$ 
and 
so $\mathring{X}^{k, \rmtwo}(S) = \d^{k-1}_S \mathring{X}^{k-1}(S)$.
In particular, we have the following $L^2(S)$-orthogonal decomposition:
\begin{equation}\label{eq:typeI-typeII-orthog-decomp}
	\mathring{X}^{k}(S)
	= \mathring{X}^{k, \rmone}(S) \oplus \mathring{X}^{k, \rmtwo}(S) = 
	\mathring{X}^{k, \rmone}(S) \oplus \d_S^{k-1} \mathring{X}^{k-1, 
	\rmone}(S).
\end{equation}
Therefore, we may select the functions in 
\cref{eq:demkowicz-type-I-dofs,eq:demkowicz-type-II-dofs}
from $\mathring{X}^{k, \rmone}(S)$ and $\mathring{X}^{k-1, \rmone}(S)$, 
respectively.

We now perform a similar decomposition of the trace space $\tr^k_S 
X^k(\That)$, $S \in \Delta_k(\That)$, to select the functions in 
\cref{eq:demkowicz-whitney-dofs}.
We adopt the convention that for $E \in \Delta_1(\That)$ and $v \in 
X^1(\That)$, the projection onto the tangent space of $E$ is scalar-valued, 
and thus $\tr^k_S X^k(\That)$, $S \in \Delta_k(\Khat)$, is scalar-valued for 
$k \in 0:d-1$.
Standard techniques also show that we have the following 
$L^2(S)$-orthogonal decomposition of the 
trace space 
$\tr^k_S X^k(\That)$:
\begin{equation}\label{eq:ktrace-decomp}
	\tr^k_S X^{k}(\That) 
	= \tr^k_S W^k(\That) \oplus \d_S^{k-1} \mathring{X}^{k-1, \rmone}(S) 
	= \mathbb{R} \oplus \d_S^{k-1} \mathring{X}^{k-1, \rmone}(S), 
\end{equation}
where $W^k(\That)$ denotes the space of Whitney forms, 
defined as the lowest-order finite element space of the first kind; e.g., if 
$d=3$, then 
\begin{equation}
	W^k(\That) \coloneqq \begin{cases}
		\CG_1(\That) & \text{if } k = 0, \\
		\Ned_1(\That) & \text{if } k = 1, \\
		\RT_1(\That) & \text{if } k = 2, \\
		\DG_0(\That) & \text{if } k = 3. 
	\end{cases}
\end{equation}
In contrast to \cite{demkowicz00}, we further decompose 
\cref{eq:demkowicz-whitney-dofs} using \cref{eq:ktrace-decomp} by choosing 
$q$ in \cref{eq:demkowicz-whitney-dofs}
to be the constant 1 and of the form $\d_S^{k-1} \tilde{q}$, 
where $\tilde{q} \in 
\mathring{X}^{k-1, \rmone}_p(S)$. As we will see in 
\cref{lem:basis-contains-whitney}, this choice ensures that the 
canonical basis for the Whitney forms is contained in the high-order basis.
We will achieve an interior orthogonality property through a careful and 
novel choice 
of basis for the bubble spaces $\mathring{X}^{k, \rmone}(S)$.

\subsection{Abstract construction of orthogonality-promoting basis}

We define the finite elements via a Ciarlet triple $(\That, X^k(\That), 
\mathcal{L} )$, 
where the set $\mathcal{L} = \{\ell_j^k\}$ is a basis for the dual space 
$(X^k(\That))^*$,
with degrees of freedom $\ell_j^k$ that map a $k$-form $v \in X^k(\That)$ 
to a real number $\ell_j^k(v)$. Let $N^k_S \coloneqq \dim \mathring{X}^{k, 
	\rmone}(S)$ denote the dimension of 
the type-$\rmone$ bubble spaces for $S \in \Delta_l(\That)$, $l \in k+1:d$. 
For notational convenience, we also define $N^{k}_S \coloneqq \dim \tr^k_S 
W^k(\That) = 1$ for $S \in \Delta_{k}(\That)$.
Thanks to \cref{eq:typeI-typeII-orthog-decomp,eq:ktrace-decomp}, there holds 
\begin{subequations}
	\label{eq:bubble-trace-dim-counts}
	\begin{alignat}{2}
		\dim \mathring{X}^k(S) &= N^k_S + N^{k-1}_S \qquad & &\Forall S \in 
		\bigcup_{l=k+1}^{d} \Delta_l(\That), \\
		\dim \tr_S^k X^k(\That) &= 1 + N^{k-1}_S \qquad & &\Forall S \in  
		\Delta_k(\That). 
	\end{alignat} 	
\end{subequations}

For $k \in 0:d-1$ and $l \in k+1 : d$ and each subsimplex $S \in 
\Delta_l(\That)$,
we compute an eigenbasis of the bubbles 
$ \{ \psi_{S, j}^{k} : j \in 1:N^k_S \} 
\subseteq \mathring{X}^{k, \rmone}(S)$
that solves the following generalized eigenvalue problem:
\begin{equation}
	\label{eq:ref-eval-problem-x}
	(\d_S^{k} \psi_{S, j}^{k}, \d_S^k \psi_{S, i}^{k})_{S} = \delta_{ij} 
	\quad \text{and} \quad 
	(\psi_{S, j}^{k}, \psi_{S, i}^{k})_{S} = \lambda_{S, j} \delta_{ij} 
	\qquad 
	\Forall~ i, j \in 1:N^k_S.
\end{equation}
The eigenbases $\{\psi_{S, j}^{k}\}$ are numerically computed offline and 
only 
once for each relevant reference subsimplex. Then, by construction, the set 
$\{ 
\psi_{S, j}^{k} \}$ forms a basis for $\mathring{X}^{k, \rmone}(S)$, and the 
set 
$\{ \d_S^k \psi_{S, j}^{k} \}$ forms a basis for $\d_S^k \mathring{X}^{k}(S)$.

We choose $\That$ to be an equilateral simplex
so that only one facet, edge, etc.~needs to be computed,
and the rest are constructed by the appropriate pullback.
We compute the solution to \cref{eq:ref-eval-problem-x} 
by first solving the following augmented eigenvalue problem on the whole 
bubble 
space $\mathring{X}^{k}$: 
\begin{equation}
	\label{eq:ref-eval-problem-x-aug}
	(\d_S^{k} \hat{\psi}_{S, j}^{k}, \d_S^k \hat{\psi}_{S, i}^{k})_{S} = 
	\mu_{S, j}\delta_{ij} \quad \text{and} \quad 
	(\hat{\psi}_{S, j}^{k}, \hat{\psi}_{S, i}^{k})_{S} = \delta_{ij} \qquad 
	\Forall~ i, j \in 1:\dim \mathring{X}^{k}.
\end{equation}
Then, we renormalize the subset of eigenfunctions for which $\mu_{S, j} \neq 
0$ to
form the solution to \cref{eq:ref-eval-problem-x}.

Our degrees of freedom for the discrete spaces in the $\Ltwo$-de Rham complex
$X^k(\That)$ are defined as follows.
On each subsimplex we prescribe two types of degrees of freedom if $k>0$.
On each $k$-simplex $S \in \Delta_k(\That)$,
we augment the degrees of freedom for Whitney forms \cref{eq:whitney-dofs} 
with integral moments against the exterior derivative of the basis for the 
bubble 
space of $(k-1)$-forms $\mathring{X}^{k-1}(S)$.
On the remaining sub-simplices of dimension $l \in k+1:d$,
$S \in \Delta_l(\That)$, we prescribe 
integral moments of $\d^k_S\tr^k_S v$ against the eigenbasis for $\d^{k}_S 
\mathring{X}^{k}$,
and integral moments of $\tr^k_S v$ against the eigenbasis for $\d^{k-1}_S 
\mathring{X}^{k-1}(S)$:
\begin{subequations} \label{eq:abstract-dofs}
	\begin{alignat}{2}
		\label{eq:whitney-dofs}
		\ell^{k, \rmone}_{S, 1}(v) &\coloneqq (1, \tr^k_S v)_{S}, \qquad & 
		&\Forall S 
		\in \Delta_k(\That),\\
		\label{eq:type-I-dofs}
		\ell^{k, \rmone}_{S, j}(v) &\coloneqq (\d_S^k \psi_{S, j}^{k}, \d_S^k 
		\tr^k_S 
		v)_{S} \qquad & &\Forall j \in 1:N^k_S, \ 
			\Forall S \in \bigcup_{l=k+1}^d \Delta_l(\That), \\
		\label{eq:type-II-dofs}
		\ell^{k, \rmtwo}_{S, j}(v) &\coloneqq (\d_S^{k-1} \psi_{S, j}^{k-1}, 
		\tr^k_S 
		v)_{S} \qquad & &\Forall j \in 1:N^{k-1}_S, \ 
			\Forall S \in \bigcup_{l=k}^d \Delta_l(\That). 
	\end{alignat}
\end{subequations}
We collect the degrees of freedom into a single set
\begin{equation} \label{eq:abstract-dofs-stiff}
	\mathcal{L} :=   \bigcup_{l = k}^{d} \bigcup_{S \in 
		\Delta_{l}(\That)} \{ \ell^{k, \rmone}_{S, j} : j \in 1:N^k_S \} \cup 
		\{ 
	\ell^{k, \rmtwo}_{S, j} : j \in 1:N^{k-1}_S \}. 
\end{equation}

\noindent The unisolvence of the degrees of freedom 
$\mathcal{L}$ follows 
from standard arguments.
\begin{lemma} \label{lem:abstract-dofs-stiff-unisolvant}
	The degrees of freedom \cref{eq:abstract-dofs-stiff} are unisolvent on 
	$X^k(\That)$.
\end{lemma}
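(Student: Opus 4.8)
The plan is to prove unisolvence by the standard dimension-count-plus-injectivity argument: first verify that the number of functionals in $\mathcal{L}$ equals $\dim X^k(\That)$, and then show that the only $v \in X^k(\That)$ annihilated by every functional in $\mathcal{L}$ is $v = 0$. For the dimension count, I would sum \cref{eq:bubble-trace-dim-counts}: on each $k$-simplex $S \in \Delta_k(\That)$ the Whitney-type and type-$\rmtwo$ functionals contribute $1 + N^{k-1}_S = \dim \tr^k_S X^k(\That)$, and on each $S \in \Delta_l(\That)$ with $l \in k+1{:}d$ the type-$\rmone$ and type-$\rmtwo$ functionals contribute $N^k_S + N^{k-1}_S = \dim \mathring{X}^k(S)$. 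Summing these local contributions over all subsimplices recovers $\dim X^k(\That)$ by the usual decomposition of a finite element space into its subentity bubble spaces (this is exactly the decomposition underlying the Demkowicz et al.\ construction, and is the dimension bookkeeping already implicit in \cref{fig:3DL2deRham}); alternatively one observes that $\mathcal{L}$ is in bijection with the degrees of freedom \cref{eq:demkowicz-whitney-dofs,eq:demkowicz-type-I-dofs,eq:demkowicz-type-II-dofs} of \cite{demkowicz00} after the orthogonal refinements \cref{eq:typeI-typeII-orthog-decomp,eq:ktrace-decomp}, which only change the basis of each functional block and not its dimension.

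For injectivity, suppose $v \in X^k(\That)$ satisfies $\ell(v) = 0$ for all $\ell \in \mathcal{L}$. I would argue by induction on the dimension $l$ of the subsimplices, from $l = k$ up to $l = d$, establishing at each stage that $\tr^k_S v = 0$ for all $S \in \Delta_l(\That)$. For the base case $l = k$: on a $k$-simplex $S$, vanishing of $\ell^{k,\rmone}_{S,1}$ and the $\ell^{k,\rmtwo}_{S,j}$ means $\tr^k_S v$ is $L^2(S)$-orthogonal to $\mathbb{R} \oplus \d_S^{k-1}\mathring{X}^{k-1,\rmone}(S)$, which by \cref{eq:ktrace-decomp} is all of $\tr^k_S X^k(\That)$; since $\tr^k_S v$ lies in that space, $\tr^k_S v = 0$. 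For the inductive step at dimension $l \in k+1{:}d$, fix $S \in \Delta_l(\That)$; by the inductive hypothesis $\tr^k_{\partial S} v = 0$, so $\tr^k_S v \in \mathring{X}^k(S)$. Using the orthogonal decomposition \cref{eq:typeI-typeII-orthog-decomp}, write $\tr^k_S v = w_{\rmone} + \d_S^{k-1} w$ with $w_{\rmone} \in \mathring{X}^{k,\rmone}(S)$ and $w \in \mathring{X}^{k-1,\rmone}(S)$. Apply $\d_S^k$: by \cref{eq:trace-ds-commute} and the fact that $\{\d_S^k \psi_{S,j}^k\}$ is a basis for $\d_S^k \mathring{X}^k(S)$, the vanishing of $\ell^{k,\rmone}_{S,j}(v) = (\d_S^k\psi^k_{S,j}, \d_S^k \tr^k_S v)_S$ forces $\d_S^k \tr^k_S v = 0$, hence $\d_S^k w_{\rmone} = 0$; but $w_{\rmone} \in \mathring{X}^{k,\rmone}(S)$ is $L^2$-orthogonal to $\ker(\d_S^k;\mathring{X}^k)$ by \cref{eq:type-I-bubble}, so $w_{\rmone} = 0$. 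It remains to kill $\d_S^{k-1} w$: here the vanishing of the type-$\rmtwo$ functionals $\ell^{k,\rmtwo}_{S,j}(v) = (\d_S^{k-1}\psi^{k-1}_{S,j}, \tr^k_S v)_S = (\d_S^{k-1}\psi^{k-1}_{S,j}, \d_S^{k-1} w)_S$, combined with the fact that $\{\psi^{k-1}_{S,j}\}$ is a basis for $\mathring{X}^{k-1,\rmone}(S)$ and hence $\{\d_S^{k-1}\psi^{k-1}_{S,j}\}$ spans $\d_S^{k-1}\mathring{X}^{k-1,\rmone}(S)$, shows $\d_S^{k-1} w$ is $L^2(S)$-orthogonal to the space it lies in, so $\d_S^{k-1} w = 0$ and $\tr^k_S v = 0$. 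Running the induction up to $l = d$ gives $v = \tr^k_{\That} v = 0$.

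I expect the main obstacle to be the inductive step, and specifically the bookkeeping of the exactness of the bubble subcomplex \cref{eq:bubble-subcomplex} (\cref{lem:bubble-subcomplex-exact}) that underlies \cref{eq:typeI-typeII-orthog-decomp}: one must be careful that when $S$ itself has dimension $l < d$, the surface operators $\d_S^k$ and $\d_S^{k-1}$ are the intrinsic ones on $S$, that the decomposition and the eigenbasis are computed with respect to the $L^2(S)$ inner product, and that the commuting relation \cref{eq:trace-ds-commute} is applied with the correct $S$. The edge case $k = 0$ deserves a separate remark, since then there are no type-$\rmtwo$ functionals ($\mathring{X}^{-1}(S)$ is empty) and the decompositions collapse to $\tr^0_S X^0 = \mathbb{R}$ on vertices and $\mathring{X}^0(S) = \mathring{X}^{0,\rmone}(S)$ on higher subsimplices, so the argument simplifies. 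Everything else — the dimension count and the two orthogonality arguments in each step — is routine linear algebra once the decompositions \cref{eq:typeI-typeII-orthog-decomp} and \cref{eq:ktrace-decomp} and the basis properties of the $\psi^k_{S,j}$ are in hand.
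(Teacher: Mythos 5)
Your proof is correct and takes essentially the same route as the paper: a dimension count via \cref{eq:bubble-trace-dim-counts} and the Arnold--Falk--Winther subentity decomposition, followed by injectivity via induction on subsimplex dimension, using the $L^2(S)$-orthogonality of \cref{eq:ktrace-decomp} at $l=k$ and of \cref{eq:typeI-typeII-orthog-decomp} (equivalently, exactness of \cref{eq:bubble-subcomplex}) at $l>k$ to peel off the type-$\rmone$ and type-$\rmtwo$ components with the corresponding functionals. The only cosmetic deviation is that the paper first places $\tr^k_S v$ in $\d_S^{k-1}\mathring{X}^{k-1,\rmone}(S)$ and then kills it with the type-$\rmtwo$ functionals, whereas you directly observe $\tr^k_S v$ is orthogonal to both summands of the decomposed space it lives in; these are the same fact.
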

\begin{proof}
	We first show that $\dim\mathcal{L} = \dim X^k(\That)$. 
	Note 
	that
	\begin{align*}
		\dim\mathcal{L} &= \sum_{S \in \Delta_k(\That)} \left( 
		1 + 
		N_S^{k-1} \right) + \sum_{l=k+1}^{d} \sum_{S \in \Delta_l(\That)} 
		\left( 
		N_S^k + N_{S}^{k-1} \right)  \\
		&= \sum_{S \in \Delta_k(\That)} \dim \tr_S^k 
		X^k(\That) + \sum_{l=k+1}^{d} \sum_{S \in \Delta_l(\That)} \dim 
		\mathring{X}^k(S),
	\end{align*}
	where we used \cref{eq:bubble-trace-dim-counts}. Thanks to 
	\cite[Theorem 5.5 \& Lemma 5.6]{Arnold2010}, we conclude that $\dim 
	\mathcal{L} = \dim X^k(\That)$.
	
	Now suppose that $v \in X^k(\That)$ satisfies $\ell(v) = 0$ for all $\ell 
	\in \mathcal{L}$. Since \cref{eq:whitney-dofs} vanishes, the 
	decomposition \cref{eq:ktrace-decomp} shows that for all $S \in 
	\Delta_k(\That)$, $\tr_S^k v = \d_S^{k-1} w_S$ for some $w_S \in 
	\mathring{X}^{k-1,\rmone}(S)$. However, the degrees of freedom 
	\cref{eq:type-II-dofs} vanish, and so $\tr_S^k v \equiv 0$.
	
	We may now proceed inductively on $l \in k+1:d$. For $S \in 
	\Delta_l(\That)$ 
	with $l \in k+1:d$, $\tr_S^k v \in \mathring{X}^k(S)$, and so $\d_S^k 
	\tr_S^k 
	v \equiv 0$ by \cref{eq:type-I-dofs}. By the exactness of 
	\cref{eq:bubble-subcomplex},  $\tr_S^k v = \d_S^{k-1} w_S$ for some $w_S 
	\in 
	\mathring{X}^{k-1,\rmone}(S)$. The degrees of freedom 
	\cref{eq:type-II-dofs} then 
	give $w_S \equiv 0$. Since this holds with $S = \That$, $v \equiv 0$. 
	The unisolvence of $\mathcal{L}$ now follows.
\end{proof}

\subsection{Concrete definition of orthogonality-promoting degrees of 
freedom} 
\label{sec:concrete-dofs}

For the sake of readability, we have simplified our notation slightly by
dropping the $k$-form superscript
and explicitly writing out the trace operators.
In particular, the the simplified notation in this section is limited 
to this section only, as it clashes with notation used elsewhere.

\subsubsection{Degrees of freedom for $\Hgrad$} \label{sec:hgrad}

We discretize $H(\grad, \That)$ with the space $\CG_p(\That)$.
For each subsimplex $S\in \Delta_l(\That)$ with $l\in 1:d$,
we compute the eigenbasis $\{\psi_{S,j}\}$ for
$\mathring{X}^{0,\rmone}(S) = \mathring{X}^0(S)$ satisfying
\begin{equation} \label{eq:hgrad-fdm}
	(\grad_S\psi_{S, j}, \grad_S\psi_{S, i})_{S} = \delta_{ij}, \quad
	(\psi_{S, j}, \psi_{S, i})_{S} = \lambda_j\delta_{ij},
\end{equation}
where we recall that $\grad_S$ is the tangential gradient on $S$. The degrees 
of freedom for $X^0(\That)$ in \cref{eq:abstract-dofs-stiff} are 
point evaluations at vertices 
and integral moments of surface gradients
on each higher-dimensional subsimplex:
\begin{subequations}
	\label{eq:hgrad-dofs}
	\begin{alignat}{2}
		\label{eq:hgrad-dofs-low-order} 
		\ell_V(v) &= v(V)  \qquad & &\Forall V \in \Delta_0(\That),  \\
		\label{eq:hgrad-dofs-grad}
		\ell_{S, j}(v) &= (\grad_S\psi_{S, j}, \grad_S v)_{S} \qquad & 
		&\Forall j \in 1:N_S^0, \ \Forall S 
		\in \bigcup_{l=1}^d \Delta_l(\That).
	\end{alignat}
\end{subequations}

\subsubsection{Degrees of freedom for $\Hcurl$} \label{sec:hcurl}

We discretize $H(\curl, \That)$ with $X^1(\That)$, 
which we recall is either $\Ned_p(\That)$ or $\NedTwo_{p}(\That)$,
so that the corresponding space $X^0(\That)$ is either 
$\CG_p(\That)$ or $\CG_{p+1}(\That)$, respectively. We define a basis for the 
dual of $X^1(\That)$ by 
using the eigenbasis $\{\psi_{S,j}\}$ constructed in \cref{eq:hgrad-fdm}
for the $\Hgrad$ bubbles $\mathring{X}^0(S)$. 
For each subsimplex $S\in \Delta_l(\That)$ with $l\in 2:d$,
we compute the eigenbasis $\{\Psi_{S,j}\}$ for $\mathring{X}^{1,\rmone}(S)$ 
satisfying
\begin{equation} \label{eq:hcurl-fdm}
	(\curl_S\Psi_{S, j}, \curl_S\Psi_{S, i})_{S} = \delta_{ij}, \quad
	(\Psi_{S, j}, \Psi_{S, i})_{S} = \lambda_j\delta_{ij},
\end{equation}
where we recall that $\curl_S$ is the tangential curl on $S$.
As mentioned above, $\{\curl_S\Psi_{S,j}\}$ is a basis for $\curl_S 
\mathring{X}^{1}(S)$.

The degrees of freedom for $X^1(\That)$ in \cref{eq:abstract-dofs-stiff} 
are tangential moments along edges,
moments of curl against curls
and moments against gradients
on each higher-dimensional subsimplex:
\begin{subequations}
	\label{eq:hcurl-dofs}
	\begin{alignat}{2}
		\label{eq:hcurl-dofs-low-order}
		\ell^{\rmone}_{E, 1}(v) &= (1, \bt\cdot v)_E  \qquad & &\Forall E \in 
		\Delta_1(\That),  \\
		\label{eq:hcurl-dofs-no-grad}
		\ell^{\rmone}_{S, j}(v) &= (\curl_S\Psi_{S, j}, \curl_S \Pi_S v)_{S}  
		\qquad & 
		&\Forall j \in 1:N_S^1, \ 
			\Forall S \in \bigcup_{l=2}^d \Delta_l(\That), \\
		\label{eq:hcurl-dofs-grad}
		\ell^{\rmtwo}_{S, j}(v) &= (\grad_S\psi_{S, j}, \Pi_S v)_{S}  \qquad 
		& 
		&\Forall j \in 1:N_S^0, \ 
			\Forall S \in \bigcup_{l=1}^d \Delta_l(\That),      
	\end{alignat}
\end{subequations}
where we recall that $\Pi_S$ is the projection operator onto the tangent 
space of $S$. In particular, the only difference between $\Ned_p(\That)$ and 
$\NedTwo_{p}(\That)$ is in \cref{eq:hcurl-dofs-grad}. For $\Ned_p(\That)$, 
the 
eigenfunctions $\psi_{S, j}$ defined in \cref{eq:hgrad-fdm} and appearing in 
\cref{eq:hcurl-dofs-grad} are degree $p$, while 
for $\NedTwo_{p}(\That)$, they are degree $p+1$.

\subsubsection{Degrees of freedom for $\Hdiv$} \label{sec:hdiv}

We discretize $H(\div, \That)$ with $X^2(\That)$, which we recall 
is either $\RT_p(\That)$ or $\BDM_{p}(\That)$. If $X^2(\That) = \RT_p(\That)$,
then we can choose $X^1(\That)$ as either $\Ned_p$ or $\NedTwo_p$, 
while if $X^2(\That) = \BDM_p(\That)$, we can choose  $X^1_p(\That)$ 
as either $\Ned_{p+1}$ or $\NedTwo_{p+1}$. The resulting basis will depend on 
this
particular choice, but the properties of the basis in the remainder of the 
manuscript are independent of this choice. For the implementation, we always 
choose $X^1(\That)$ to be the corresponding space of the second kind.

We define a basis for the dual of $X^2(\That)$ by 
using the eigenbasis $\{\Psi_{F,j}\}$ constructed in \cref{eq:hcurl-fdm}
for the $\Hcurl$ bubbles $\mathring{X}^{1, \rmone}(F)$. 
On the interior of $\That$,
we compute the eigenbasis $\{\Phi_{C,j}\}$ for 
$\mathring{X}^{2,\rmone}(\That)$ 
satisfying
\begin{equation} \label{eq:hdiv-fdm}
	(\div\Phi_{C, j}, \div\Phi_{C, i})_{\That} = \delta_{ij}, \quad
	(\Phi_{C, j}, \Phi_{C, i})_{\That} = \lambda_j\delta_{ij}.
\end{equation}
As mentioned above, $\{\div\Psi_{C,j}\}$ is a basis for $\div 
\mathring{X}^{2}(\That)$.

The degrees of freedom for $X^2(\That)$ in \cref{eq:abstract-dofs-stiff} 
are normal moments on faces,
moments of divergence against divergences
and moments against curls:
\begin{subequations} 
	\label{eq:hdiv-dofs}
	\begin{alignat}{2}
		\label{eq:hdiv-dofs-low-order}
		\ell^{\rmone}_{F, 1}(v) &= (1, \bn\cdot v)_F  \qquad & &\Forall F \in 
		\Delta_{d-1}(\That),  \\
		\label{eq:hdiv-dofs-face}
		\ell^{\rmtwo}_{F, j}(v) &= (\curl_F\Psi_{F, j}, \bn\cdot v)_{F} 
		\qquad & 
		&\Forall j \in 1:N_F^2, \ \Forall F \in \Delta_{d-1}(\That), \\
		\label{eq:hdiv-dofs-no-curl}
		\ell^{\rmone}_{C, j}(v) &= (\div\Phi_{C, j}, \div v)_{\That}  \qquad 
		& &\Forall j \in 1:N_C^2, \\
		\label{eq:hdiv-dofs-curl}
		\ell^{\rmtwo}_{C, j}(v) &= (\curl\Psi_{C, j}, v)_{\That} \qquad
		& &\Forall j \in 1:N_C^1. 
	\end{alignat}
\end{subequations}
Note that the only difference between $\RT_p(\That)$ or $\BDM_{p}(\That)$ are 
the eigenfunctions $\Psi_{F, j}$ and $\Psi_{C, j}$ defined in 
\cref{eq:hcurl-fdm} appearing in \cref{eq:hdiv-dofs-face,eq:hdiv-dofs-curl}. 
For $\RT_p(\That)$, these eigenfunctions are $\Ned_p$ or $\NedTwo_p$ 
functions (or their trace), while for $\BDM_p(\That)$, they are 
$\Ned_{p+1}(\That)$ or $\NedTwo_{p+1}(\That)$ functions (or their trace).

\begin{remark}
	The degrees of freedom in \cref{eq:hgrad-dofs,eq:hcurl-dofs} for 
	discretizations of $\Hgrad$ and $\Hcurl$ are well-defined for $d=2$. To 
	discretize $\Hdiv$ in 2D, which corresponds to $k=1$, we choose 
	$X^0(\That)$ 
	to be $\CG_{p}$ if $X^1(\That) = \RT_p$ or $\CG_{p+1}$ if $X^1(\That) = 
	\BDM_{p}$ and the operator $\curl_F$ in \cref{eq:hdiv-dofs-face} is the 
	tangential derivative operator and $\curl$ in \cref{eq:hdiv-dofs-curl} 
	corresponds to a $\pi/2$ rotation of the gradient operator.
\end{remark}

\section{Properties of the basis on the reference cell}

For $k \in 0:d-1$, let
\begin{align}
	\label{eq:reference-basis}
	\left\{ \phi_{S, j}^{k, \rmone}, \phi_{S, n}^{k, \rmtwo} : j = 1:N^k_S, \ 
	n = 
	1:N^{k-1}_S, \ S \in \Delta_{l}(\That), \ l \in k:d \right\}	
\end{align}
be the basis dual to the degrees of freedom \cref{eq:abstract-dofs-stiff}:
\begin{alignat*}{2}
	\ell^{k, \rmone}_{S, i}(\phi_{S, j}^{k, \rmone}) &= \delta_{ij}, \qquad & 
	\ell^{k, \rmtwo}_{S, m}(\phi_{S, j}^{k, \rmone}) &= 0, \\
	\ell^{k, \rmone}_{S, i}(\phi_{S, n}^{k, \rmtwo}) &= 0, \qquad & \ell^{k, 
		\rmtwo}_{S, m}(\phi_{S, n}^{k, \rmtwo}) &= \delta_{mn},
\end{alignat*}	
for all $i, j = 1:N^k_{S}$, $m, n = 1:N^{k-1}_S$, and $S \in 
\Delta_{l}(\That)$ 
with $l=k:d$.

\subsection{Relations to Whitney forms and eigenfunctions}

The first result shows that this basis contains the canonical basis for the 
Whitney forms.
\begin{lemma} \label{lem:basis-contains-whitney}
	For $k=0:d-1$ and $S \in \Delta_k(\That)$, $\phi_{S , 1}^{k, \rmone}$ 
	coincides with the 
	Whitney form associated to $S$.
\end{lemma}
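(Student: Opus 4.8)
The plan is to show that the Whitney form $w_S$ associated to a $k$-simplex $S \in \Delta_k(\That)$ satisfies exactly the same defining equations as the dual basis function $\phi_{S,1}^{k,\rmone}$, and then invoke uniqueness of the dual basis. Recall that the Whitney form $w_S$ is the canonical lowest-order basis function, i.e.\ it is the element of $W^k(\That) \subset X^k(\That)$ whose trace is the constant $1$ on $S$ (in the scalar-valued convention for $k<d$ adopted before \cref{eq:ktrace-decomp}) and whose trace vanishes on all other $k$-simplices $S' \in \Delta_k(\That)$, $S' \neq S$. I would first check that $w_S$, viewed as an element of $X^k(\That)$, annihilates every degree of freedom in $\mathcal{L}$ except $\ell^{k,\rmone}_{S,1}$, on which it takes the value $1$; the result then follows since $\phi_{S,1}^{k,\rmone}$ is by definition the unique such element.

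The verification splits according to the type of degree of freedom. For a Whitney degree of freedom $\ell^{k,\rmone}_{S',1}(v) = (1, \tr^k_{S'} v)_{S'}$ with $S' \in \Delta_k(\That)$: if $S' = S$ this equals $(1,1)_S$; here one must be slightly careful, since the definition $\ell^{k,\rmone}_{S,1}(v) = (1,\tr^k_S v)_S$ is not normalized, so strictly one normalizes $w_S$ (or equivalently rescales the constant $q=1$) so that $(1,1)_S = 1$ — I would note this normalization convention matches the one implicit in ``the Whitney form associated to $S$'' and in \cref{eq:whitney-dofs}. If $S' \neq S$, then $\tr^k_{S'} w_S = 0$ by the defining property of Whitney forms, so the functional vanishes. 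For the type-$\rmone$ interior/interface degrees of freedom \cref{eq:type-I-dofs}, $\ell^{k,\rmone}_{S',j}(v) = (\d^k_{S'} \psi^k_{S',j}, \d^k_{S'} \tr^k_{S'} v)_{S'}$ for $S' \in \Delta_l(\That)$, $l \in k+1:d$: using the commuting relation \cref{eq:trace-ds-commute}, $\d^k_{S'}\tr^k_{S'} w_S = \tr^{k+1}_{S'}\d^k w_S$; but $\d^k w_S$ is a Whitney $(k+1)$-form and lies in $W^{k+1}(\That)$, whose trace on any $l$-simplex with $l \geq k+1$ is constant, hence $\d^k_{S'}\tr^k_{S'} w_S$ is a constant on $S'$, which is $L^2(S')$-orthogonal to $\d^k_{S'}\psi^k_{S',j} \in \d^k_{S'}\mathring{X}^k(S')$ because the bubble space consists of functions vanishing on $\partial S'$ — more precisely, by \cref{eq:typeI-typeII-orthog-decomp} and exactness of \cref{eq:bubble-subcomplex}, $\d^k_{S'}\mathring{X}^k(S') = \d^k_{S'}\mathring{X}^{k,\rmone}(S')$ is orthogonal to constants. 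For the type-$\rmtwo$ degrees of freedom \cref{eq:type-II-dofs}, $\ell^{k,\rmtwo}_{S',j}(v) = (\d^{k-1}_{S'}\psi^{k-1}_{S',j}, \tr^k_{S'} v)_{S'}$ for $S' \in \Delta_l(\That)$, $l \in k:d$: here $\tr^k_{S'} w_S$ is constant on $S'$ (again because $w_S \in W^k(\That)$), while $\d^{k-1}_{S'}\psi^{k-1}_{S',j}$ lies in $\d^{k-1}_{S'}\mathring{X}^{k-1,\rmone}(S')$, which by \cref{eq:ktrace-decomp} is precisely the $L^2(S')$-orthogonal complement of the constants $\mathbb{R} = \tr^k_{S'} W^k(\That)$ inside $\tr^k_{S'} X^k(\That)$; hence this functional vanishes as well.

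The main obstacle, and the only place requiring genuine care, is the claim that $\tr^k_{S'} w_S$ and $\d^k_{S'}\tr^k_{S'} w_S$ are \emph{constant} on every subsimplex $S'$ of dimension $\geq k$ (resp.\ $\geq k+1$). This rests on two facts: that $w_S$ belongs to the lowest-order space $W^k(\That)$, and that traces of Whitney $k$-forms onto subsimplices of dimension $l \geq k$ are again (scalar multiples of) lowest-order Whitney forms on those subsimplices — and for $l = k$ a Whitney $k$-form on a $k$-simplex is literally a constant. This is a standard property of Whitney forms (see e.g.\ \cite{Arnold2006,Arnold2010}), so I would cite it rather than reprove it, but it is worth stating explicitly since the entire argument hinges on the orthogonality between ``constants on $S'$'' and the various eigenbasis images $\d^k_{S'}\mathring{X}^{k,\rmone}(S')$, $\d^{k-1}_{S'}\mathring{X}^{k-1,\rmone}(S')$ built into the decompositions \cref{eq:typeI-typeII-orthog-decomp,eq:ktrace-decomp}. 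Once that is in place, the argument is a routine check that $w_S$ (suitably normalized) satisfies the defining relations of $\phi_{S,1}^{k,\rmone}$, and uniqueness of the dual basis closes the proof.
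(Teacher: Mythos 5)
Your overall strategy matches the paper's exactly: verify that the Whitney form $w_S$ annihilates every degree of freedom except $\ell^{k,\rmone}_{S,1}$ (normalized so it gives $1$), then invoke unisolvence/uniqueness of the dual basis. Your argument for the type-$\rmone$ degrees of freedom \cref{eq:type-I-dofs} is correct and in fact tidier than the paper's: you use the commuting relation \cref{eq:trace-ds-commute} to write $\d^k_{S'}\tr^k_{S'}w_S = \tr^{k+1}_{S'}\d^k w_S$, observe that $\d^k w_S$ is a constant $(k+1)$-form (its trace on any $S'$ of dimension $\geq k+1$ is constant), and then use that $\d^k_{S'}\mathring{X}^{k,\rmone}(S')$ is $L^2(S')$-orthogonal to constants, which follows from Stokes' theorem since the $\psi^k_{S',j}$ vanish on $\partial S'$. (Minor inaccuracy: $\d^k w_S$ is not a Whitney $(k+1)$-form; it is the constant $(k+1)$-form lying in the image $\d^k W^k \subsetneq W^{k+1}$, but the constancy is what matters.)

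The type-$\rmtwo$ step has a genuine gap. You assert that $\tr^k_{S'}w_S$ is constant on $S'$ for every $S' \in \Delta_l(\That)$ with $l \geq k$, and then invoke \cref{eq:ktrace-decomp} to get orthogonality against $\d^{k-1}_{S'}\mathring{X}^{k-1,\rmone}(S')$. Both claims fail once $l > k$: the trace of a Whitney $k$-form onto an $l$-simplex with $l > k$ is the lowest-order Whitney $k$-form on that $l$-simplex, which is \emph{not} constant (e.g., the tangential trace $\Pi_F w_E$ of an edge Whitney form onto a face $F$ is a genuine 2D N\'ed\'elec field); and \cref{eq:ktrace-decomp} is stated only for $S \in \Delta_k(\That)$, not for higher-dimensional subsimplices. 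What actually makes the type-$\rmtwo$ functionals vanish for $l > k$ is the fact that Whitney forms are \emph{codifferential-free}: integrating by parts against the bubble $\psi^{k-1}_{S',j}$ (which vanishes on $\partial S'$) moves the derivative onto $\tr^k_{S'}w_S$, yielding a term $\div_{S'}\tr^1_{S'}w_S$ or $\curl_{S'}\tr^2_{S'}w_S$ that must be shown to vanish by a direct computation with the explicit formula for Whitney forms. This is exactly what the paper does in its Step~2, and it is a substantive identity (cancellations in $\grad\lambda_i \times (\grad\lambda_j \times\grad\lambda_k)$ and so forth), not a consequence of constancy. You should replace the constancy/ortho-decomposition argument for $l > k$ with this integration-by-parts plus codifferential-free computation.
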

\begin{proof}
	We prove the result for $d=3$; the case 
	$d=2$ is similar. Let $k \in 0:d-1$ and, for $S \in 
	\Delta_k(\That)$, let $w_{S}^k \in W^k(\That)$ denote the Whitney form 
	associated to $S$: $(1, \tr_{S'}^k w_S^k) = \delta_{S S'}$ for all $S' 
	\in 
	\Delta_k(\That)$.
	
	\noindent \textbf{Step 1: Type I degrees of freedom \cref{eq:type-I-dofs} 
		vanish. } For 
	$S' \in \cup_{l=k+1}^{d} \Delta_k(\That)$ and $j \in 1:N_{S'}^k$, there 
	holds 
	\begin{align*}
		\ell^{k, \rmone}_{S', j}(w_S^k) = (\d_{S'}^k \psi_{S', j}^{k}, 
		\d_{S'}^k 
		\tr^k_{S'} w_S^k)_{S'} &= \begin{cases}
			-(\psi_{S', j}^{0}, \div_{S'} \grad_{S'} \tr^k_{S'} w_S^0)_{S'} & 
			\text{if } k=0, \\
			(\psi_{S', j}^{1}, \curl_{S'} \curl_{S'} \tr^k_{S'} w_S^1 )& 
			\text{if } k = 1, \\
			-(\psi_{S', j}^{2}, \grad_{S'} \div_{S'} \tr^k_{S'} w_S^2)_{S'} & 
			\text{if } k = 2,
		\end{cases}
	\end{align*}
	where $\div_{S'}$ denotes surface divergence. Since $w_S^k \in 
	\P_1(\That)$, 
	$\ell^{k, \rmone}_{S', j}(w_S^k) = 0$. 
	
	\noindent \textbf{Step 2: Type II degrees of freedom 
	\cref{eq:type-II-dofs} 
		vanish. } 
	Let $S' \in \cup_{l=k}^{d} \Delta_l(\That)$ and $j \in 1:N_{S'}^{k-1}$. 
	Then, 
	there holds
	\begin{align*}
		\ell^{k, \rmtwo}_{S', j}(w_S^k) = (\d_{S'}^{k-1} \psi_{S', j}^{k-1}, 
		\tr^k_{S'} w_S^k)_{S'} &= \begin{cases}
			(\psi_{S', j}^{1}, \div_{S'} \tr^k_{S'} w_S^1)& 
			\text{if } k = 1, \\
			-(\psi_{S', j}^{2}, \curl_{S'} \tr^k_{S'} w_S^2)_{S'} & 
			\text{if } k = 2,
		\end{cases}.
	\end{align*}
	If $S' \in \Delta_{k}(\That)$, then $\tr_{S'}^k w_S^k \in \mathbb{R}$, 
	and so 
	$\ell^{k, \rmtwo}_{S', j}(w_S^k) = 0$. 
	
	Now suppose that $S' \in \Delta_l(\That)$ with $l \in k+1:d$. For $k=1$, 
	we 
	have
	\begin{align*}
		\tr_{S'} w_S^1 \in \begin{cases}
			\P_{0}(S')^{2} + \P_{0}(S) (\Pi_{S'} \bx)^{\perp} & \text{if } l 
			= 
			2, 
			\\
			\P_{0}(S')^{3} + \P_{0}(S)^3 \times \bx & \text{if } l = 3,
		\end{cases} 	
	\end{align*}
	and direct computation shows that $\tr_{S'} w_S^1 = 0$. E.g. for $l=3$, 
	we 
	have
	\begin{align*}
		\div (\vec{a} + \vec{b} \times \bx) = \div (\vec{b} \times \bx) = 
		-\vec{b} \cdot \curl \bx = 0 \qquad \Forall \vec{a}, \vec{b} \in 
		\mathbb{R}^3.
	\end{align*}
	Similarly, for $k=2$, we have
	\begin{align*}
		\curl(\vec{a} + b \bx) = b \curl \bx = 0 \qquad \Forall \vec{a} \in 
		\mathbb{R}^3, \ \Forall b \in \mathbb{R}.
	\end{align*}
	Thus, \cref{eq:type-II-dofs} vanish. By the unisolvence of 
	\cref{eq:abstract-dofs-stiff}, $\phi_{S , 1}^{k, \rmone} = w_{S}^k$.
\end{proof}

The next result shows that the traces of the type-$\rmone$ basis functions 
coincide with the associated eigenfunctions, while the traces of the 
type-$\rmtwo$ 
basis functions coincide with the exterior derivative of the $(k-1)$-form 
eigenfunctions.
\begin{lemma} \label{lem:trace-basis-eigenfunctions}
	For $k \in 0:d-1$, there holds
	\begin{subequations}
		\label{eq:trace-of-basis}
		\begin{alignat}{3}
			\label{eq:trace-of-basis-I}
			\tr^k_S \phi_{S, j}^{k, \rmone} &= \psi_{S, j}^{k} \qquad 
			& &\Forall j \in 1:N_S^k,\
			& &\Forall S \in \bigcup_{l=k+1}^d \Delta_l(\That), \\
			\label{eq:trace-of-basis-II}
			\tr^k_S \phi_{S, n}^{k, \rmtwo} &= \d_S^{k-1} \psi_{S, n}^{k-1} 
			\qquad 
			& &\Forall n \in 1:N_S^{k-1}, \ 
			& &\Forall S \in \bigcup_{l=k}^d \Delta_l(\That),
		\end{alignat}	
	\end{subequations}
	where $\{ \psi_{S, j}^{k} \}$ is defined in \cref{eq:ref-eval-problem-x}.
\end{lemma}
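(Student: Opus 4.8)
The plan is to fix a subsimplex $S \in \Delta_l(\That)$ and identify $\tr^k_S$ of the basis functions attached to $S$ by matching their values against only those degrees of freedom supported on $S$ and its faces, using that the latter are unisolvent on the relevant trace space. First I would note that for $\phi$ equal to $\phi^{k,\rmone}_{S,j}$ (requiring $l\ge k+1$) or $\phi^{k,\rmtwo}_{S,n}$ (allowing $l\ge k$), every $\ell\in\mathcal{L}$ attached to a \emph{strict} subsimplex $S'\subsetneq S$ evaluates to $0$ on $\phi$ by the biorthogonality defining \cref{eq:reference-basis}, and each such functional depends on $\phi$ only through $\tr^k_S\phi$ since traces compose. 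Re-running the inductive argument in the proof of \cref{lem:abstract-dofs-stiff-unisolvant} with $S$ in the role of $\That$ then yields $\tr^k_{\partial S}\phi = 0$ when $l\ge k+1$, so that $\tr^k_S\phi\in\mathring{X}^k(S)$; when $l=k$ there are no proper subsimplices carrying degrees of freedom and one only knows $\tr^k_S\phi\in\tr^k_S X^k(\That)$. In either case the degrees of freedom \cref{eq:whitney-dofs,eq:type-I-dofs,eq:type-II-dofs} attached to $S$ number exactly $\dim\mathring{X}^k(S)$, resp.\ $\dim\tr^k_S X^k(\That)$, by \cref{eq:bubble-trace-dim-counts}, so it suffices to exhibit an element with the prescribed values.

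For \cref{eq:trace-of-basis-I} I would expand $\tr^k_S\phi^{k,\rmone}_{S,j} = \sum_m c_m\psi^k_{S,m} + \d_S^{k-1} w$ according to \cref{eq:typeI-typeII-orthog-decomp}, with $w\in\mathring{X}^{k-1,\rmone}(S)$. Testing with $\ell^{k,\rmone}_{S,i}$ and using $\d_S^k\d_S^{k-1}=0$ together with the $\d_S^k$-orthonormality in \cref{eq:ref-eval-problem-x} gives $c_i=\delta_{ij}$; testing with $\ell^{k,\rmtwo}_{S,n}$ and using that $\psi^k_{S,m}\in\mathring{X}^{k,\rmone}(S)$ is $L^2(S)$-orthogonal to $\ker(\d_S^k;\mathring{X}^k)=\d_S^{k-1}\mathring{X}^{k-1}(S)$ collapses the identity to $(\d_S^{k-1}\psi^{k-1}_{S,n},\d_S^{k-1}w)_S = 0$ for all $n$, whence $w=0$ by the $\d_S^{k-1}$-orthonormality of $\{\psi^{k-1}_{S,n}\}$. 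The argument for \cref{eq:trace-of-basis-II} runs in parallel: when $l\ge k+1$ the same expansion gives $c_i=0$ from $\ell^{k,\rmone}_{S,i}(\phi^{k,\rmtwo}_{S,n})=0$, and $(\d_S^{k-1}\psi^{k-1}_{S,m},\d_S^{k-1}w)_S=\delta_{mn}$ from $\ell^{k,\rmtwo}_{S,m}(\phi^{k,\rmtwo}_{S,n})=\delta_{mn}$, so $w=\psi^{k-1}_{S,n}$; when $l=k$ I would instead use \cref{eq:ktrace-decomp} to write $\tr^k_S\phi^{k,\rmtwo}_{S,n}=c+\d_S^{k-1}w$ with $c\in\mathbb{R}$, observe that the Whitney functional \cref{eq:whitney-dofs} vanishes and that $\mathbb{R}\perp\d_S^{k-1}\mathring{X}^{k-1,\rmone}(S)$ in $L^2(S)$ forces $c=0$, and again recover $w=\psi^{k-1}_{S,n}$.

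The only nonroutine point is the reduction in the first paragraph: one must check that the degrees of freedom attached to $S$ and its faces genuinely control $\tr^k_S$ and constitute a unisolvent set on $\mathring{X}^k(S)$, resp.\ $\tr^k_S X^k(\That)$, so that the identification above is legitimate. This is inherited from the unisolvence proof of \cref{lem:abstract-dofs-stiff-unisolvant} applied to the sub-element on $S$, once one invokes the exactness of the bubble subcomplex \cref{eq:bubble-subcomplex} and the dimension counts \cref{eq:bubble-trace-dim-counts}; everything else is an immediate consequence of $\d\circ\d=0$, the orthogonal splittings \cref{eq:typeI-typeII-orthog-decomp,eq:ktrace-decomp}, and the normalizations \cref{eq:ref-eval-problem-x}.
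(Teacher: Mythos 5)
Your proposal is correct and follows essentially the same route as the paper's proof: re-run the inductive argument from the unisolvence lemma to place $\tr^k_S\phi$ in $\mathring{X}^k(S)$ (resp.\ $\tr^k_S X^k(\That)$), then use the orthogonal decompositions \cref{eq:typeI-typeII-orthog-decomp,eq:ktrace-decomp} together with the orthonormality in \cref{eq:ref-eval-problem-x} to identify the coefficients from the degrees of freedom attached to $S$. The paper compresses this to ``uniqueness of the eigendecomposition'' after establishing membership in $\mathring{X}^{k,\rmone}(S)$ directly (by killing the type-$\rmtwo$ component via \cref{eq:type-II-dofs}), whereas you carry the type-$\rmtwo$ component $\d^{k-1}_S w$ explicitly and show $w=0$; these are the same computation arranged slightly differently.
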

\begin{proof}
	Let $k \in 0:d-1$, $S \in \Delta_l(\That)$ for some $l \in k+1:d$, and $j 
	\in 
	1:N_S^k$. Arguing as in the proof of 
	\cref{lem:abstract-dofs-stiff-unisolvant}, we may show that $\tr_S^k 
	\phi_{S, 
		j}^{k, \rmone} \in \mathring{X}^{k, \rmone}(S)$, and so 
	\cref{eq:trace-of-basis-I} follows by the uniqueness of the 
	eigendecomposition of $\mathring{X}^k(S)$ \cref{eq:ref-eval-problem-x}. 
	\Cref{eq:trace-of-basis-II} follows from a similar argument.
\end{proof}

In fact, we can improve \cref{eq:trace-of-basis-II} and show that each 
type-$\rmtwo$ basis functions is the exterior derivative of a type-$\rmone$ 
basis 
function for $(k-1)$-forms.
\begin{lemma} \label{lem:d-preserves-basis-functions}
	For $k \in 1:d-1$, there holds
	\begin{align}
		\label{eq:d-preserves-basis-functions}
		\phi_{S, n}^{k, \rmtwo} = \d^{k-1} \phi_{S, n}^{k-1, \rmone} \qquad 
		\Forall n \in 1:N_S^{k-1}, \ 
		\Forall S \in \bigcup_{l=k}^{d} \Delta_l(\That).
	\end{align}
\end{lemma}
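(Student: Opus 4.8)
The plan is to verify that the function $\d^{k-1}\phi_{S,n}^{k-1,\rmone}$ satisfies the defining equations of $\phi_{S,n}^{k,\rmtwo}$, namely that every degree of freedom in $\mathcal{L}$ for $k$-forms evaluates to the appropriate Kronecker delta or zero; then unisolvence (\Cref{lem:abstract-dofs-stiff-unisolvant}) forces the claimed equality. The key structural fact I would lean on is the commuting relation \cref{eq:trace-ds-commute}, which guarantees that applying $\d^{k-1}$ and then a trace $\tr^k_{S'}$ equals applying $\tr^{k-1}_{S'}$ and then $\d^{k-1}_{S'}$, so that the traces of $\d^{k-1}\phi_{S,n}^{k-1,\rmone}$ are controlled by the already-known traces of $\phi_{S,n}^{k-1,\rmone}$ via \cref{lem:trace-basis-eigenfunctions}.

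The steps, in order: First, record that by \cref{lem:trace-basis-eigenfunctions} (specifically \cref{eq:trace-of-basis-I} applied at level $k-1$) we have $\tr^{k-1}_{S'}\phi_{S,n}^{k-1,\rmone} = \psi_{S,n}^{k-1}$ when $S' = S$, and $\tr^{k-1}_{S'}\phi_{S,n}^{k-1,\rmone} = 0$ for $S'$ not containing $S$ (indeed for $S' \ne S$ with $\dim S' = \dim S$, and for lower-dimensional $S'$); for $S' \supsetneq S$ the trace lies in $\mathring{X}^{k-1}(S')$ in a controlled way. Second, use \cref{eq:trace-ds-commute} to compute $\tr^k_{S'}\bigl(\d^{k-1}\phi_{S,n}^{k-1,\rmone}\bigr) = \d^{k-1}_{S'}\tr^{k-1}_{S'}\phi_{S,n}^{k-1,\rmone}$. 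Third, evaluate the three families of degrees of freedom on $\d^{k-1}\phi_{S,n}^{k-1,\rmone}$:
\begin{itemize}
\item The Whitney/type-$\rmone$ dof $\ell^{k,\rmone}_{S',1}(v) = (1,\tr^k_{S'}v)_{S'}$ for $S'\in\Delta_k(\That)$: since $\tr^k_{S'}(\d^{k-1}\phi_{S,n}^{k-1,\rmone}) = \d^{k-1}_{S'}(\text{something})$ is an exact form on $S'$, its pairing against the constant $1$ vanishes by integration by parts (the boundary trace of $\phi_{S,n}^{k-1,\rmone}$ on $\partial S'$ being zero on bubble components).
\item The type-$\rmone$ dof \cref{eq:type-I-dofs}, $\ell^{k,\rmone}_{S',j}(v) = (\d^k_{S'}\psi^k_{S',j}, \d^k_{S'}\tr^k_{S'}v)_{S'}$: here $\d^k_{S'}\tr^k_{S'}(\d^{k-1}\phi_{S,n}^{k-1,\rmone}) = \d^k_{S'}\d^{k-1}_{S'}(\cdots) = 0$ since consecutive exterior derivatives compose to zero; hence this dof vanishes.
\item The type-$\rmtwo$ dof \cref{eq:type-II-dofs}, $\ell^{k,\rmtwo}_{S',m}(v) = (\d^{k-1}_{S'}\psi^{k-1}_{S',m}, \tr^k_{S'}v)_{S'}$: substituting $\tr^k_{S'}(\d^{k-1}\phi_{S,n}^{k-1,\rmone}) = \d^{k-1}_{S'}\tr^{k-1}_{S'}\phi_{S,n}^{k-1,\rmone}$ and using \cref{lem:trace-basis-eigenfunctions} plus, for $S'=S$, the orthonormality of $\{\d^{k-1}_S\psi^{k-1}_{S,j}\}$ built into \cref{eq:ref-eval-problem-x}, we get exactly $\delta_{mn}$ when $S'=S$ and $0$ otherwise.
\end{itemize}
Finally, since $\d^{k-1}\phi_{S,n}^{k-1,\rmone}$ produces precisely the dof values $\ell^{k,\rmtwo}_{S,m}\mapsto\delta_{mn}$ and all other dofs zero, unisolvence of $\mathcal{L}$ identifies it with $\phi_{S,n}^{k,\rmtwo}$.

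The main obstacle I anticipate is bookkeeping the trace behavior on subsimplices $S'$ of dimension strictly between $\dim S$ and $d$, and on $d$-dimensional $S' = \That$: one must confirm that $\d^{k-1}_{S'}\tr^{k-1}_{S'}\phi_{S,n}^{k-1,\rmone}$ is genuinely a bubble in $\mathring{X}^k(S')$ (so the type-$\rmone$ and type-$\rmtwo$ dofs there are the only ones that could fire) and that it is $L^2(S')$-orthogonal to the kernel component exactly when required — this is precisely the content of the type-$\rmone$/type-$\rmtwo$ splitting \cref{eq:typeI-typeII-orthog-decomp} and the exactness of the bubble subcomplex \cref{eq:bubble-subcomplex} (via \cref{lem:bubble-subcomplex-exact}), so the argument reduces to invoking those structural results rather than fresh computation. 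A secondary subtlety is the degree shift: when $X^k$ and $X^{k-1}$ are chosen from different kinds (e.g.\ the hybrid complexes of \cref{fig:3DL2deRham-hybrid}), one must check that $\d^{k-1}$ maps the $(k-1)$-form bubble basis into the correct $k$-form space, which is exactly the exactness assumption on \cref{eq:abstract-complex}.
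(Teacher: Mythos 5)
Your proof follows essentially the same route as the paper: verify that $\d^{k-1}\phi_{S,n}^{k-1,\rmone}$ hits the same degree-of-freedom pattern as $\phi_{S,n}^{k,\rmtwo}$, using \cref{eq:trace-ds-commute} and $\d^k \circ \d^{k-1}=0$, then conclude by unisolvence (\cref{lem:abstract-dofs-stiff-unisolvant}). Two minor variations: for the Whitney dofs you invoke Stokes' theorem where the paper directly reads off orthogonality to constants from the decomposition \cref{eq:ktrace-decomp}, and for the type-$\rmtwo$ dofs the paper skips \cref{lem:trace-basis-eigenfunctions} entirely and simply observes that $(\d^{k-1}_{S'}\psi^{k-1}_{S',m}, \d^{k-1}_{S'}\tr^{k-1}_{S'}\phi_{S,n}^{k-1,\rmone})_{S'}$ is verbatim the dof $\ell^{k-1,\rmone}_{S',m}$ applied to the dual basis function $\phi_{S,n}^{k-1,\rmone}$, hence $\delta_{SS'}\delta_{mn}$ by definition. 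Your appeal to the orthonormality \cref{eq:ref-eval-problem-x} works only for $S'=S$; for $S'\supsetneq S$ you cannot identify $\tr^{k-1}_{S'}\phi_{S,n}^{k-1,\rmone}$ with a single eigenfunction and the ``$0$ otherwise'' claim is left unjustified. Invoking the duality $\ell^{k-1,\rmone}_{S',m}(\phi_{S,n}^{k-1,\rmone})=\delta_{SS'}\delta_{mn}$ closes this gap at once and handles all $S'$ uniformly.
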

\begin{proof}
	Let $S' \in \Delta_k(\That)$. If $S \neq S'$, then $\tr^{k-1}_{S'} 
	\phi_{S, 
		n}^{k-1, \rmone} \equiv 0$ by the same arguments in the proof of 
		\cref{lem:abstract-dofs-stiff-unisolvant}. Moreover, if $S = S'$, 
		then 
	$\phi_{S, n}^{k-1, \rmone} \in \mathring{X}^{k-1, \rmone}$ and
	\cref{eq:ktrace-decomp} shows that $(1, \d_{S}^{k-1} \tr^{k-1}_{S}  
	\phi_{S, 
		n}^{k-1, \rmone})_S = 0$, and so \cref{eq:trace-ds-commute} gives
	\begin{align*}
		(1, \tr^k_{S} \d^{k-1} \phi_{S, n}^{k-1, \rmone})_{S} = (1, 
		\d_{S}^{k-1} 
		\tr^{k-1}_{S}  \phi_{S, n}^{k-1, \rmone})_{S} = 0.
	\end{align*}
	
	Now let $S' \in \cup_{l=k+1}^d \Delta_l(\That)$ and  $j \in 1:N^k_{S'}$. 
	\Cref{eq:trace-ds-commute} again gives
	\begin{align*}
		(\d_{S'}^k \psi_{S', j}^{k}, \d_{S'}^k \tr_{S'}^k  \d^{k-1} \phi_{S, 
			n}^{k-1, \rmone} )_{S'} = (\d_{S'}^k \psi_{S', j}^{k}, \d_{S'}^k 
		\d_{S'}^{k-1} \tr_{S'}^{k-1} \phi_{S, n}^{k-1, \rmone} )_{S'} = 0
	\end{align*}
	by the complex property $\d_{S'}^k \circ \d_{S'}^{k-1} = 0$. Thus, the 
	degrees of freedom \cref{eq:whitney-dofs,eq:type-I-dofs} vanish.
	
	Finally, let $S' \in \cup_{l=k}^d \Delta_l(\That)$ and $m \in 
	1:N^{k-1}_S$. 
	By virtue of $\phi_{S, n}^{k-1, \rmone}$ being a basis function of 
	$X^{k-1}_p(\That)$ dual to the degrees of freedom 
	$\mathcal{L}$, there holds
	\begin{align*}
		(\d_{S'}^{k-1} \psi_{S', j}^{k-1}, \tr^k_{S'}  \d^{k-1} \phi_{S, 
			n}^{k-1, \rmone})_{S'} = (\d_{S'}^{k-1} \psi_{S', j}^{k-1}, 
		\d_{S'}^{k-1} 
		\tr^{k-1}_{S'} \phi_{S, 
			n}^{k-1, \rmone})_{S'} = \delta_{S S'} \delta_{mn}.
	\end{align*}
	\Cref{eq:d-preserves-basis-functions} now follows from the unisolvence of 
	$\mathcal{L}$ on $X^{k}(\That)$.
\end{proof}

	\begin{remark}
		\label{rem:harmonic-extension-basis}
		We briefly outline how 
		\Cref{lem:basis-contains-whitney,lem:trace-basis-eigenfunctions,%
		lem:d-preserves-basis-functions}
		show that the basis \cref{eq:reference-basis} may be constructed 
		in a more typical $p$-version fashion as follows. For 
		$k \in 0:d-1$, $l \in k+1:d$, $S \in \Delta_l(\That)$, and 
		$j \in 1:N_S^k$, we extend 
		$\psi_{S, j}^k \in \mathring{X}^{k, \rmone}(S)$
		defined in \cref{eq:ref-eval-problem-x} to $X^k(\That)$
		by recursively taking harmonic extensions onto subsimplices 
		of one dimension higher. More precisely, 
		let $E_{l}^k \psi_{S, j}^k \in \prod_{\substack{S' \in \Delta_l(\That)}}
		\Tr_{S'}^k X^k(\That)$ be the zero extension of $\psi_{S, j}^k$:
		\begin{align*}
			E_{l}^k \psi_{S, j}^k|_{S'} = \psi_{S, j}^k \delta_{S S'}
			\quad \forall S' \in \Delta_l(\That).
		\end{align*}
		For $m \in l+1:d$, we recusively define
		$E_m^k  \psi_{S, j}^k \in \prod_{\substack{S' \in \Delta_m(\That)}}
		\Tr_{S'}^k X^k(\That)$
		on $S' \in \Delta_m(\That)$ as the 
		solution to the following variational problem:
		\begin{alignat*}{2}
			(\d_{S'}^k E_{m}^k \psi_{S, j}^k, \d_{S'}^k v)_{S'} &= 0 
			\qquad & & \forall v \in \mathring{X}^{k, \rmone}(S'), \\
			(E_m^k \psi_{S, j}^k, v)_{S'} &= 0 
			\qquad & & \forall v \in \d^{k-1}_{S'} 
				\mathring{X}^{k-1, \rmone}(S'), \\
			\Tr_F^k E_m^k \psi_{S, j}^k &= E_{m-1} \psi_{S, j}^k 
			\qquad & &\forall F \in \Delta_{m-1}(S').
		\end{alignat*}	
		One may readily verify that $E_m \psi_{S, j}$ is well-defined as the
		traces are constructed to be compatible. Then, the basis 
		\cref{eq:reference-basis} which is dual to the 
		degrees of freedom \cref{eq:abstract-dofs} 
		is the Whitney forms augmented with
		\begin{alignat*}{3}
			\phi_{S, j}^{k, \rmone} &= E_d^k \psi_{S, j}^k \qquad 
			& &\Forall j \in 1:N_S^k, \
			& &\Forall S \in \bigcup_{l=k+1}^d \Delta_l(\That), \\
			\phi_{S, n}^{k, \rmtwo} &= \d^{k-1} E_d^k \psi_{S, n}^{k-1} 
			\qquad 
			& &\Forall n \in 1:N_S^{k-1}, \ 
			& &\Forall S \in \bigcup_{l=k}^d \Delta_l(\That).
		\end{alignat*}
		Also note that this construction is hierarchical in dimension 
		in the sense that the trace of the 3D basis functions associated to $S$, 
		$ \{ \tr_{S} \phi_{S, j}^{k, \rmone}, \tr_{S} \phi_{S, n}^{k, \rmtwo}  
		\}$, 
		coincides with the 2D basis construction on $S$.
	\end{remark}

\subsection{Algebraic properties} \label{sec:reference-matrix-properties}

We now turn to properties of the resulting mass and stiffness matrices on the 
reference cell. In 
particular, let $\{ \phi_j^k \}$ denote the basis for $X^{k}(\That)$ in 
\cref{eq:reference-basis} and let the mass matrix $\hat{M} = (\hat{M}_{ij})$ 
and 
stiffness matrix $\hat{K} = (\hat{K}_{ij})$ be given by
\begin{align*}
	\hat{M}_{ij} = (\phi_i^k, \phi_j^k)_{\That} \quad \text{and} \quad 
	\hat{K}_{ij} = (\d^k \phi_i^k, \d^k \phi_j^k)_{\That}.
\end{align*}
We partition $\hat{M}$ and $\hat{K}$ into the contribution from the interior 
and 
interface basis functions:
\begin{align*}
	\hat{M} = \begin{bmatrix}
		\hat{M}_{\interior \interior} & \hat{M}_{\interior \interface} \\
		\hat{M}_{\interface \interior} & \hat{M}_{\interface \interface}
	\end{bmatrix} 
	\quad \text{and} \quad 
	\hat{K} = \begin{bmatrix}
		\hat{K}_{\interior \interior} & \hat{K}_{\interior \interface} \\
		\hat{K}_{\interface \interior} & \hat{K}_{\interface \interface}
	\end{bmatrix},
\end{align*}
where the subscript ``$\interior \interior$" denotes the contribution from 
the 
interior basis functions ($\phi_{S, j}^{k, \rmone}, \phi_{S, j}^{k, \rmtwo}$ 
with 
$S = \That$), ``$\interface\interface$" the contribution from the remaining 
(interface) functions, and ``$\interior \interface$" and ``$\interface 
\interior$" the interaction between the interior and interface functions. We 
further partition each block into contributions 
from the type-$\rmone$ and type-$\rmtwo$ basis functions, using the 
superscripts ``$\rmone,\rmone$", ``$\rmone,\rmtwo$", etc.

\begin{lemma} \label{lem:ref-element-matrix-properties}
	$\hat{M}$ and $\hat{K}$ have the following structure:
	\begin{align*}
		\hat{M} = \left[
		\begin{array}{cc|cc}
			\Lambda & & \bullet & \bullet \\
			& I & & \\ 
			\midrule
			\bullet & & \bullet & \bullet \\
			\bullet & & \bullet & \bullet
		\end{array} \right]
		\quad \text{and} \quad 
		\hat{K} = \left[
		\begin{array}{cc|cc}
			I & & & \\
			& & & \\
			\midrule
			& & \bullet & \\
			& & &
		\end{array} \right],
	\end{align*}
	where $\Lambda$ is a diagonal matrix whose entries are the eigenvalues  
	$\{\lambda_{T,j}\}_{j=1}^{N_T^k}$ \cref{eq:ref-eval-problem-x}.
\end{lemma}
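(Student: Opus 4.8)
The whole statement can be read off from two facts already established, together with Ciarlet biorthogonality. First, by \cref{lem:trace-basis-eigenfunctions}, using the trivial trace $\tr^k_{\That}v = v$, the interior basis functions are the reference eigenfunctions themselves, $\phi^{k,\rmone}_{\That,j} = \psi^k_{\That,j}$, and combining \cref{lem:trace-basis-eigenfunctions} with \cref{lem:d-preserves-basis-functions} gives $\phi^{k,\rmtwo}_{\That,n} = \d^{k-1}\psi^{k-1}_{\That,n}$ (with the convention $\d^k_{\That} = \d^k$). Second, under these identifications the interior degrees of freedom are exactly inner products against the interior basis functions: from \cref{eq:type-I-dofs} and the trivial trace, $\ell^{k,\rmone}_{\That,i}(v) = (\d^k\phi^{k,\rmone}_{\That,i},\d^k v)_{\That}$, and from \cref{eq:type-II-dofs}, $\ell^{k,\rmtwo}_{\That,m}(v) = (\phi^{k,\rmtwo}_{\That,m}, v)_{\That}$ for every $v \in X^k(\That)$. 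Evaluating these at the various basis functions and invoking $\ell(\phi) = \delta$ pins down every row and column of $\hat M$ and $\hat K$ indexed by an interior function.

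For $\hat K$: every type-$\rmtwo$ basis function, interior or interface, satisfies $\d^k\phi^{k,\rmtwo}_{S,n} = \d^k\d^{k-1}\phi^{k-1,\rmone}_{S,n} = 0$ by \cref{lem:d-preserves-basis-functions} and the complex property, so every row and column of $\hat K$ indexed by a type-$\rmtwo$ function vanishes. Next, evaluating $\ell^{k,\rmone}_{\That,i}(v) = (\d^k\phi^{k,\rmone}_{\That,i},\d^k v)_{\That}$ at $v = \phi^{k,\rmone}_{\That,j}$ gives $\delta_{ij}$, so the interior type-$(\rmone,\rmone)$ block of $\hat K$ is $I$, while evaluating at any interface type-$\rmone$ function or at any type-$\rmtwo$ function gives $0$. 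Hence the only possibly nonzero block of $\hat K$ is the interface type-$(\rmone,\rmone)$ block.

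For $\hat M$: the interior type-$(\rmone,\rmone)$ block is $\big((\psi^k_{\That,i},\psi^k_{\That,j})_{\That}\big) = \Lambda = \diag(\lambda_{\That,j})$ directly from \cref{eq:ref-eval-problem-x}. Evaluating $\ell^{k,\rmtwo}_{\That,m}(v) = (\phi^{k,\rmtwo}_{\That,m}, v)_{\That}$ at the basis functions and using biorthogonality shows the interior type-$(\rmtwo,\rmtwo)$ block is $I$ (equivalently, from $\phi^{k,\rmtwo}_{\That,n} = \d^{k-1}\psi^{k-1}_{\That,n}$ and the normalization in \cref{eq:ref-eval-problem-x} applied with $k$ replaced by $k-1$), and that the interior type-$\rmtwo$ functions are $L^2(\That)$-orthogonal both to all interior type-$\rmone$ functions — consistent with $\psi^k_{\That,i} \in \mathring X^{k,\rmone}(\That)$ and \cref{eq:type-I-bubble} — and to all interface functions. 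The remaining blocks (interior type-$\rmone$ against interface, and the full interface--interface block) carry no claimed structure and are marked $\bullet$; the case $k=0$ has no type-$\rmtwo$ functions and is the $2\times 2$ upper-left specialization.

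This argument is essentially bookkeeping; the only delicate point, and the closest thing to an obstacle, is correctly matching each interior degree of freedom to the inner product it represents once the identifications $\phi^{k,\rmone}_{\That,j}=\psi^k_{\That,j}$ and $\phi^{k,\rmtwo}_{\That,n}=\d^{k-1}\psi^{k-1}_{\That,n}$ are in place, which relies on the trivial-trace convention $\tr^k_{\That}v = v$ and on $\d^k_{\That} = \d^k$. Once these are pinned down, the stated structure of $\hat M$ and $\hat K$ follows immediately from Ciarlet biorthogonality and the complex property.
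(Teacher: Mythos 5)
Your proof is correct and takes essentially the same approach as the paper's: identify the interior basis functions with the reference eigenfunctions via Lemmas \ref{lem:trace-basis-eigenfunctions} and \ref{lem:d-preserves-basis-functions}, use the complex property to kill the type-$\rmtwo$ rows and columns of $\hat K$, and read off the remaining interior blocks from the eigenfunction normalization \cref{eq:ref-eval-problem-x} and Ciarlet biorthogonality. The only cosmetic difference is that you phrase the interior-$\rmtwo$ decoupling directly through $\ell^{k,\rmtwo}_{\That,m}(\phi) = \delta$, whereas the paper observes that $\hat M_{\interior\interior}^{\rmtwo,\rmtwo}$ equals the $(\rmone,\rmone)$ stiffness block of the $(k-1)$-form element; both are the same computation.
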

\begin{proof}
	First note that the degrees of freedom \cref{eq:type-I-dofs} with 
	$S = \That$ mean that $(\d^k \phi, \d^k v)_{\That} = 0$ for any interface 
	basis function $\phi$ and any interior bubble $v \in 
	\mathring{X}^k(\That)$, and so $\hat{K}_{\interior \interface} = 0$ and 
	$\hat{K}_{\interface \interior} = 0$. 
	Similarly, \cref{eq:d-preserves-basis-functions} shows that $\d^{k} \phi 
	= 0$ for any type-$\rmtwo$ basis function, and so the only nonzero blocks 
	of $\hat{K}$ are $\hat{K}_{\interior \interior}^{\rmone, \rmone}$ and 
	$\hat{K}_{\interface,\interface}^{\rmone, \rmone}$. Thanks to 
	\cref{eq:trace-of-basis-I,eq:ref-eval-problem-x}, $\hat{K}_{\interior 
		\interior}^{\rmone, \rmone} = I$.
	
	Using \cref{eq:trace-of-basis,eq:ref-eval-problem-x} once again gives that
	$\hat{M}_{\interior \interior}$ is diagonal. More precisely, 
	\Cref{eq:trace-of-basis-I,eq:ref-eval-problem-x} mean that 
	$\hat{M}_{\interior \interior}^{\rmone, \rmone} = \Lambda$. Additionally, 
	\cref{eq:d-preserves-basis-functions} and the choice of degrees of 
	freedom \cref{eq:type-II-dofs} show that 
	(i) $\hat{M}_{\interior \interface}^{\rmtwo, \rmone} = 0$ and 
	$\hat{M}_{\interior \interface}^{\rmtwo, \rmtwo} = 0$ and
	(ii) the type$-\rmtwo$/type-$\rmtwo$ 
	block of the mass matrix of a $k$-form is the type$-\rmone$/type-$\rmone$ 
	block of the stiffness matrix of a $(k-1)$-form. Thus, 
	$\hat{M}_{\interior \interior}^{\rmtwo, \rmtwo} = I$.
\end{proof}

\subsection{Comparison to existing bases}

In \Cref{fig:conditioning} we compare the $\Hgrad$ interior basis functions 
of this
construction on the equilateral reference simplex to the hierarchical basis
proposed by Sherwin \& Karniadakis~\cite{sherwin1995new}. 
The basis proposed by this work has been constructed to produce diagonal
stiffness and mass interior submatrices $K_{\interior\interior}$ and 
$M_{\interior\interior}$.
Thus, diagonal scaling will ensure a condition number of one.
On the other hand, for the hierarchical basis these condition numbers grow 
with
$p$, indicating that a point-Jacobi preconditioner is not a suitable approach 
for this basis.  A
sparse-direct method would be required to eliminate the interiors.

We also consider the number of nonzeros per row of the stiffness matrix.  In
our construction, the number of nonzeros per row of the interior mass and
stiffness blocks is exactly one.  For the hierarchical basis
of~\cite{sherwin1995new}, Beuchler \& Pillwein~\cite{beuchler2007sparse} 
proved
that the number of nonzeros per row asymptotes to a constant of approximately 
8 in two
dimensions, and is proven to be bounded above by 105 in three dimensions. 

\begin{figure}
	\centering
	\pgfplotstableread[col sep=comma,]{results/beuchler.2d.csv}\tableA
\pgfplotstableread[col sep=comma,]{results/demkowicz.2d.csv}\tableB
\pgfplotstableread[col sep=comma,]{results/beuchler.3d.csv}\tableC
\pgfplotstableread[col sep=comma,]{results/demkowicz.3d.csv}\tableD

\subfloat[2D conditioning\label{fig:cond2d}]{\resizebox{0.49\textwidth}{!}{
\centering
\begin{tikzpicture}[scale=0.75]
    \begin{loglogaxis}[
         xlabel={Degree, $p$}, 
         ylabel={$\kappa_2(\diag(A)^{-1}A)$}, 
         ymax=2E3,
         log x ticks with fixed point,
         x tick label style={/pgf/number format/1000 sep=\,},
         xtick={2,4,6,8,10,12,16,20,24},
         legend cell align={left},
         legend style={at={(0.05,0.95)},anchor=north west}
        ]
        \addplot table [x=degree,y=condA, col sep=comma] {\tableA};
        \addlegendentry{Hierarchical, $K_{\interior \interior}$}
        \addplot table [x=degree,y=condB, col sep=comma] {\tableA};
        \addlegendentry{Hierarchical, $M_{\interior \interior}$}
        \addplot table [x=degree,y=condA, col sep=comma] {\tableB};
        \addlegendentry{This work, $K_{\interior \interior}$}
        \addplot table [x=degree,y=condB, col sep=comma] {\tableB};
        \addlegendentry{This work, $M_{\interior \interior}$}
        \addplot [domain=12:24] {10*pow(x/12,2)} node[above, yshift=2pt, midway, anchor=south west] {$p^2$};
        \addplot [domain=12:24] {30*pow(x/10,4)} node[above, xshift=-2pt, midway, anchor=east] {$p^4$};
    \end{loglogaxis}
\end{tikzpicture}
}}
\subfloat[3D conditioning\label{fig:cond3d}]{\resizebox{0.49\textwidth}{!}{
\centering
\begin{tikzpicture}[scale=0.75]        
    \begin{loglogaxis}[
         xlabel={Degree, $p$},
         ylabel={$\kappa_2(\diag(A)^{-1}A)$}, 
         ymax=2E3,
         log x ticks with fixed point,
         x tick label style={/pgf/number format/1000 sep=\,},
         xtick={4,6,8,10,12,14},
         legend cell align={left},
         legend style={at={(0.05,0.95)},anchor=north west}
        ]
        \addplot table [x=degree,y=condA, col sep=comma] {\tableC};
        \addlegendentry{Hierarchical, $K_{\interior \interior}$}
        \addplot table [x=degree,y=condB, col sep=comma] {\tableC};
        \addlegendentry{Hierarchical, $M_{\interior \interior}$}
        \addplot table [x=degree,y=condA, col sep=comma] {\tableD};
        \addlegendentry{This work, $K_{\interior \interior}$}
        \addplot table [x=degree,y=condB, col sep=comma] {\tableD};
        \addlegendentry{This work, $M_{\interior \interior}$}
        \addplot [domain=8:14] {12*pow(x/10,4)} node[above, yshift=2pt, midway, anchor=south west] {$p^4$};
        \addplot [domain=8:14] {40*pow(x/10,5)} node[above, xshift=-2pt, midway, anchor=east] {$p^5$};
    \end{loglogaxis}
\end{tikzpicture}
}}
	\caption{
		Condition number of the diagonally-scaled
		stiffness and mass interior submatrices 
		for $\Hgrad$ on an equilateral reference simplex. 
		The proposed basis in this work yields diagonal submatrices, 
		while the sparsity-optimized hierarchical
		bases studied in \cite{beuchler2007sparse}
		are less suitable for diagonal scaling.
	}
	\label{fig:conditioning}
\end{figure}

\section{Properties of the global basis functions}

Continuing with the notation from the previous sections, we let $X^k$ denote 
the 
global discrete space $X^{k}_{p, D}(\mesh)$.
The global basis for $X^k$ is constructed from the local basis on 
$X^k(\That)$ as 
in \cite{Rognes2010efficient}. In particular, for each element $T \in \mesh$, 
the 
bijective mapping $F_T : \That \to T$  is chosen to preserve fixed global 
orientations of 
subsimplices, and each global basis function supported on $T$ is of the form 
$\pullback_T^k(\hat{\phi})$, where $\{ \hat{\phi} \}$ is a basis for 
$X^k(\That)$ and 
$\pullback_T^k$ is defined in \cref{eq:pullback-3d-def}. For the remainder of 
the 
manuscript, we will assume that $F_T$ is chosen in this way. Boundary 
conditions 
are then enforced by removing all functions associated to subsimplices $S \in 
\Delta_{l}(\mesh)$, $l \in k:d-1$, lying on $\Gamma_D$.

\subsection{Key subspaces}

For $k \in 0:d-1$ and each subsimplex $S \in \Delta_l(\mesh)$, $l \in k:d$, 
let 
$\{ \phi_{S, j}^{k, \rmone} \}$ and $\{ \phi_{S, n}^{k, \rmtwo} \}$ denote 
the 
corresponding global basis functions associated to $S$. We collect a number 
of 
subspaces related to the basis functions. First, we separate the subspaces 
corresponding to the type-$\rmone$ basis functions into the Whitney forms, 
the 
high-order interface bubble functions, and the interior functions supported 
on 
one element: 
\begin{subequations}
	\label{eq:global-spaces-typeI}
	\begin{align}
		W^k &:= \spn\left\{ \phi_{S, 1}^{k, \rmone} : \ S \in 
		\Delta_k(\mesh), \ \tr_{\Gamma_D}^k \phi_{S, 1}^{k, \rmone} = 0 
		\right\}, 
		\\
		\mathring{X}^{k, \rmone}_{\interface} &:= \spn\left\{ \phi_{S, 
			j}^{k, 
			\rmone} : S \in \bigcup_{l=k+1}^{d-1} 
		\Delta_l(\mesh), \ j \in 1:N^k_{S}, \ \tr_{\Gamma_D}^k \phi_{S, 
			j}^{k, \rmone} = 0 	\right\}, \\
		\mathring{X}^{k, \rmone}_{\interior} &:= \spn\left\{ \phi_{T, 
			j}^{k, 
			\rmone} : \ \forall T \in \Delta_d(\mesh), \ j \in 1:N^k_{T} 
		\right\}, \\
		\label{eq:global-space-typeI}
		X^{k, \rmone} &:= W^k \oplus \mathring{X}^{k, \rmone}_{\interface} 
		\oplus 
		\mathring{X}^{k, \rmone}_{\interior},
	\end{align}
\end{subequations}
where spaces with the ``$\mathring{\phantom{n}}$" overset vanish on $S \in 
\Delta_k(\mesh)$.
Similarly, we separate the type-$\rmtwo$ subspaces:
\begin{subequations}
	\label{eq:global-spaces-typeII}
	\begin{align}
		X^{k, \rmtwo}_{\interface} &:= \spn\left\{ \phi_{S, 
			j}^{k, \rmtwo} : S \in \bigcup_{l=k}^{d-1} \Delta_l(\mesh), \ j 
			\in 
		1:N^{k-1}_{S}, \tr_{\Gamma_D}^k \phi_{S, j}^{k, \rmtwo} = 0 \right\}, 
		\\
		\mathring{X}^{k, \rmtwo}_{\interior} &:= \spn\left\{ \phi_{T, 
			j}^{k, \rmtwo} : \forall T \in \Delta_d(\mesh), \ j \in 
			1:N^{k-1}_{T} 
		\right\}, \\
		X^{k, \rmtwo} &:= X^{k, \rmtwo}_{\interface} \oplus 
		\mathring{X}^{k, \rmtwo}_{\interior}.
	\end{align}
\end{subequations}

By construction, the type-$\rmone$ and type-$\rmtwo$ spaces decompose $X^k$:
\begin{align} \label{eq:typeI-typeII-direct-sum}
	X^{k} = X^{k, \rmone} \oplus X^{k, \rmtwo}.
\end{align}
Moreover, since $\d^k \mathcal{F}_T^k(\phi) = \mathcal{F}_T^{k+1}(\d^k 
\phi)$, we 
also have the global analogue of \cref{lem:d-preserves-basis-functions} for 
$k \in 1:d-1$:
\begin{align}
	\label{eq:d-preserves-basis-functions-global}
	\phi_{S, n}^{k, \rmtwo} = \d^{k-1} \phi_{S, n}^{k-1, \rmone} \qquad 
	\Forall S \in \bigcup_{l=k}^{d} \Delta_l(\mesh), \ \Forall n \in 
	1:N_S^{k-1}.
\end{align}

\subsection{Decomposing the range and kernel of $\d^k$}

We now decompose the range of kernel of $\d^k$ using the subspaces defined 
above. 
The first result shows that the image of $\d^k$ acting on the high-order 
type-$\rmone$ spaces is disjoint from $\d^k W^k$.

\begin{lemma}
	\label{lem:image-type-one-bubble-disjoint-whitney}
	For $k \in 0:d-1$, if $u \in \mathring{X}^{k, \rmone}_{\interface} \oplus 
	\mathring{X}^{k, \rmone}_{\interior}$ satisfies $\d^k u \in \d^k 
	W^k$, then $u \equiv 0$. Consequently, $\d^k (\mathring{X}^{k, 
		\rmone}_{\interface} \oplus \mathring{X}^{k, \rmone}_{\interior}) 
		\cap \d^k 
	W^k 
	= \{0\}$.
\end{lemma}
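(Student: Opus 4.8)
The plan is to read off the coefficients of $u$ in the global basis via the dual-basis relations and show that each of them vanishes. So I would start by assuming $u \in \mathring{X}^{k, \rmone}_{\interface} \oplus \mathring{X}^{k, \rmone}_{\interior}$ with $\d^k u \in \d^k W^k$ and fixing a $w \in W^k$ with $\d^k u = \d^k w$. Since $u$ is by definition a linear combination of the type-$\rmone$ basis functions $\phi^{k,\rmone}_{S,j}$ attached to subsimplices $S \in \Delta_l(\mesh)$ with $l \in k+1:d$, and the global basis is dual to the global degrees of freedom, one may write $u = \sum_{l=k+1}^{d} \sum_{S \in \Delta_l(\mesh)} \sum_{j=1}^{N^k_S} \ell^{k, \rmone}_{S, j}(u)\, \phi^{k, \rmone}_{S, j}$, so it suffices to prove $\ell^{k, \rmone}_{S, j}(u) = 0$ for every such $S$ and $j$.

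The key observation is that each degree of freedom $\ell^{k, \rmone}_{S, j}$ with $\dim S \ge k+1$ sees $v$ only through $\d^k v$. Indeed, if $S = T$ is a cell then $\ell^{k,\rmone}_{T,j}(v) = (\d^k \psi^k_{T,j}, \d^k v)_{T}$ directly, while if $S \in \Delta_l(\mesh)$ with $l \in k+1:d-1$ then the (global version of the) commuting property \cref{eq:trace-ds-commute} gives $\d^k_S \tr^k_S v = \tr^{k+1}_S \d^k v$, so $\ell^{k,\rmone}_{S,j}(v) = (\d^k_S \psi^k_{S,j}, \tr^{k+1}_S \d^k v)_S$. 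In either case $\d^k u = \d^k w$ forces $\ell^{k,\rmone}_{S,j}(u) = \ell^{k,\rmone}_{S,j}(w)$. Finally $\ell^{k,\rmone}_{S,j}(w) = 0$: by \cref{eq:global-spaces-typeI}, $W^k = \spn\{\phi^{k,\rmone}_{S',1} : S' \in \Delta_k(\mesh),\ \tr^k_{\Gamma_D}\phi^{k,\rmone}_{S',1}=0\}$, and since the degrees of freedom and basis functions are dual, $\ell^{k,\rmone}_{S,j}(\phi^{k,\rmone}_{S',1}) = \delta_{SS'}\delta_{j1} = 0$ whenever $\dim S \ge k+1 > k = \dim S'$. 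Hence all coefficients of $u$ vanish and $u \equiv 0$.

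The ``consequently'' statement is then immediate: if $z \in \d^k(\mathring{X}^{k, \rmone}_{\interface} \oplus \mathring{X}^{k, \rmone}_{\interior}) \cap \d^k W^k$, write $z = \d^k u$ with $u \in \mathring{X}^{k, \rmone}_{\interface} \oplus \mathring{X}^{k, \rmone}_{\interior}$; then $\d^k u = z \in \d^k W^k$, so $u = 0$ by the first part and thus $z = 0$.

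I expect the only point needing real care to be the global commuting relation for subsimplices of intermediate dimension $l \in k+1:d-1$: one must justify that $\d^k_S \tr^k_S v = \tr^{k+1}_S \d^k v$ holds for $v \in X^k_p(\mesh)$ and not merely on the reference cell \cref{eq:trace-ds-commute}. Fixing a cell $T \supset S$, this reduces to the reference identity because $\tr^k_S v = \tr^k_S(v|_T)$ by conformity, $\d^k$ intertwines the pullbacks $\pullback^k_T$ and $\pullback^{k+1}_T$, and traces of conforming discrete forms are single-valued on $S$. Everything else is bookkeeping with the dual-basis relations.
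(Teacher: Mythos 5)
Your proof is correct and follows essentially the same approach as the paper's: both show that the type-$\rmone$ degrees of freedom for $S$ of dimension $\ge k+1$ only depend on $\d^k$ of the argument (via the commuting relation \cref{eq:trace-ds-commute}), and both use that these degrees of freedom vanish on Whitney forms (which is the content of the dual-basis relation combined with \cref{lem:basis-contains-whitney}). The only presentational difference is that the paper pulls back cell-by-cell to the reference element rather than arguing with global degrees of freedom directly, but as you correctly observe at the end this is just bookkeeping.
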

\begin{proof}
	Let $v \in \mathring{X}^{k, \rmone}_{\interface} \oplus \mathring{X}^{k, 
		\rmone}_{\interior}$ and $w \in W^k$ satisfy $\d^k v = \d^k w$.
	Let $T \in \mesh$, $\hat{v} \in \mathring{X}^{k, 
		\rmone}_{\interface}(\That)$, and 
	$\hat{w} \in W^{k}(\That)$ be such that $v|_{T} = \pullback^k_T(\hat{v})$
	and $w|_{T} = \pullback^k_T(\hat{w})$, where $\mathring{X}^{k, 
		\rmone}_{\interface}(\That)$, etc. denotes the spans of the 
		corresponding 
	basis functions on the reference cell. 
	Since 
	\begin{align*}
		\pullback^{k+1}_T (\d^k \hat{w}) = \d^k w|_{T} = \d^k v|_{T} =  
		\pullback^{k+1}_T (\d^k \hat{v})
	\end{align*}
	and $\pullback^{k+1}_T$ is invertible, there holds $\d^k \hat{w} = \d^k 
	\hat{v}$.
	However, for $S \in \Delta_l(\That)$, $l \in k+1:d$, and $j \in 1:N_S^k$, 
	there holds
	\begin{align*}
		(\d^k_S, \psi_{S, j}, \d^k_S \tr_S^k \hat{v})_S = (\d^k_S, \psi_{S, 
		j}, 
		\tr_S^{k+1} \d^k \hat{v})_S &= (\d^k_S, \psi_{S, j}, \tr_S^{k+1} \d^k 
		\hat{w})_S \\
		&= (\d^k_S, \psi_{S, j}, \d^k_S \tr_S^k \hat{w})_S = 0,
	\end{align*}
	where we used \cref{eq:trace-ds-commute} for the first and 
	third equalities and \cref{lem:basis-contains-whitney} for the final 
	equality. Thus,	
	\begin{align*}
		\hat{v} &= \sum_{l=k+1}^{d} \sum_{S \in \Delta_l(\That)} 
		\sum_{j=1}^{N_S^k} (\d^k_S \psi_{S, j}, \d^k_S \tr_S^k \hat{v})_S 
		\phi_{S, 
			j}^{k, \rmone} = 0 \implies v \equiv 0.
	\end{align*}
\end{proof}

The next result shows the kernel of $\d^k$ consists of a low-order 
component from the Whitney forms plus the (high-order) type-$\rmtwo$ space, 
while 
the image of $\d^k$ also contains a low-order component from the Whitney 
forms 
and a component from the high-order type-$\rmone$ space.
\begin{lemma} \label{lem:kernel-image-dk-global-splitting}
	For $k \in 0:d-1$, there holds
	\begin{align}
		\label{eq:dk-kernel-decomp}
		\begin{aligned}
			\ker(\d^k; X^k) &= X^{k, \rmtwo}_{\interface} \oplus 
			\mathring{X}^{k, \rmtwo}_{\interior} \oplus 
			\ker(\d^k; W^k) \\
			&= \d^{k-1} \mathring{X}^{k-1, \rmone}_{\interface} 
			\oplus 
			\d^{k-1} \mathring{X}^{k-1, \rmone}_{\interior} \oplus \d^{k-1} 
			W^{k-1} \oplus \mathfrak{H}^{k},
		\end{aligned}
	\end{align}
	where $\mathfrak{H}^{k}$ denote the space of lowest-order harmonic forms:
	\begin{align} \label{eq:harmonic-forms}
		\mathfrak{H}^{k} := \{ \mathfrak{h} \in \ker(\d^k; W^k) : 
		(\mathfrak{h}, 
		\d^{k-1} w) = 0 \ \Forall w \in W^{k-1} \}.
	\end{align}
	Moreover, there holds
	\begin{align} \label{eq:dk-image-decomp}
		\d^k X^k &= \d^k \mathring{X}^{k, \rmone}_{\interface} \oplus \d^k 
		\mathring{X}^{k, \rmone}_{\interior} \oplus \d^k W^k.
	\end{align}
\end{lemma}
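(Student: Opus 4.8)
The plan is to work cell by cell and reduce everything to the reference-cell facts \Cref{lem:basis-contains-whitney,lem:trace-basis-eigenfunctions,lem:d-preserves-basis-functions} together with the disjointness result \Cref{lem:image-type-one-bubble-disjoint-whitney}. For the image identity \cref{eq:dk-image-decomp}, I would start from the global direct-sum decomposition \cref{eq:typeI-typeII-direct-sum}, apply \cref{eq:d-preserves-basis-functions-global} to see that $\d^k$ annihilates all of $X^{k,\rmtwo}$, and conclude $\d^k X^k = \d^k X^{k,\rmone} = \d^k W^k + \d^k\mathring{X}^{k,\rmone}_{\interface} + \d^k\mathring{X}^{k,\rmone}_{\interior}$. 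It then remains to upgrade that sum to a direct sum. The disjointness of $\d^k W^k$ from $\d^k(\mathring{X}^{k,\rmone}_{\interface}\oplus\mathring{X}^{k,\rmone}_{\interior})$ is exactly \Cref{lem:image-type-one-bubble-disjoint-whitney}; for the remaining pair, suppose $\d^k u = \d^k v$ with $u \in \mathring{X}^{k,\rmone}_{\interface}$ and $v \in \mathring{X}^{k,\rmone}_{\interior}$. On each cell $T$, $v$ is an interior bubble, so $\tr^k_S(v|_T)=0$ for every proper subsimplex $S$, whence $\d^k_S\tr^k_S(u|_T) = \d^k_S\tr^k_S(v|_T)=0$ for all $S\in\Delta_l(\That)$, $l\in k+1:d-1$; by the degree-of-freedom characterisation used in the proof of \Cref{lem:abstract-dofs-stiff-unisolvant} and \Cref{lem:image-type-one-bubble-disjoint-whitney}, the interface bubble part vanishes, giving $u\equiv 0$, hence $v\equiv 0$ as well.

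For the kernel identity \cref{eq:dk-kernel-decomp}, I would first establish the top line. The inclusion $X^{k,\rmtwo}_{\interface}\oplus\mathring{X}^{k,\rmtwo}_{\interior}\oplus\ker(\d^k;W^k)\subseteq\ker(\d^k;X^k)$ is immediate from \cref{eq:d-preserves-basis-functions-global} (type-$\rmtwo$ functions are $\d^{k-1}$ of something, hence closed) and the definition of $\ker(\d^k;W^k)$. For the reverse inclusion, take $u\in\ker(\d^k;X^k)$ and write $u = u_{\rmone}+u_{\rmtwo}$ via \cref{eq:typeI-typeII-direct-sum}, and further split $u_{\rmone}=w+u_{\interface}+u_{\interior}$ along \cref{eq:global-space-typeI}. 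Then $\d^k u_{\rmtwo}=0$, so $\d^k w = -\d^k(u_{\interface}+u_{\interior})\in\d^k W^k\cap\d^k(\mathring{X}^{k,\rmone}_{\interface}\oplus\mathring{X}^{k,\rmone}_{\interior})=\{0\}$ by \Cref{lem:image-type-one-bubble-disjoint-whitney}; that same lemma forces $u_{\interface}+u_{\interior}\equiv 0$, and then $w\in\ker(\d^k;W^k)$, establishing the top line (the sum is direct because \cref{eq:typeI-typeII-direct-sum} and \cref{eq:global-space-typeI} already are). For the second line, I would refine the three summands: $X^{k,\rmtwo}_{\interface}=\d^{k-1}\mathring{X}^{k-1,\rmone}_{\interface}$ and $\mathring{X}^{k,\rmtwo}_{\interior}=\d^{k-1}\mathring{X}^{k-1,\rmone}_{\interior}$ follow directly from \cref{eq:d-preserves-basis-functions-global} applied to the defining basis functions of the type-$\rmtwo$ spaces in \cref{eq:global-spaces-typeII}; and $\ker(\d^k;W^k)=\d^{k-1}W^{k-1}\oplus\mathfrak{H}^k$ is the standard Hodge decomposition of the lowest-order Whitney subcomplex, with $\mathfrak{H}^k$ defined in \cref{eq:harmonic-forms} exactly as the $L^2$-orthogonal complement of the exact part, so orthogonality gives directness. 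Substituting these three refinements into the top line yields the bottom line.

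I expect the main obstacle to be the directness of the four-term sum in the second line of \cref{eq:dk-kernel-decomp}, specifically showing that $\d^{k-1}\mathring{X}^{k-1,\rmone}_{\interface}$, $\d^{k-1}\mathring{X}^{k-1,\rmone}_{\interior}$, and $\d^{k-1}W^{k-1}$ have pairwise trivial intersection: this is the $(k-1)$-form instance of the image decomposition \cref{eq:dk-image-decomp} I prove above, so one should either invoke \cref{eq:dk-image-decomp} at level $k-1$ (legitimate, since both parts of the lemma are proved for the same fixed $k$ and the $(k-1)$ statement is a previously available fact only if one is careful about the induction ordering) or, more safely, re-derive the needed disjointness directly from \Cref{lem:image-type-one-bubble-disjoint-whitney} and the cell-by-cell interior-bubble argument sketched in the first paragraph. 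I would also take care that the $\mathfrak{H}^k$ summand is genuinely disjoint from all three exact pieces: from the Whitney piece by the orthogonality in \cref{eq:harmonic-forms}, and from the two high-order pieces because any element of $\d^{k-1}(\mathring{X}^{k-1,\rmone}_{\interface}\oplus\mathring{X}^{k-1,\rmone}_{\interior})$ that also lay in $W^k$ would, by \Cref{lem:image-type-one-bubble-disjoint-whitney} (applied at level $k-1$ to compare with $\d^{k-1}W^{k-1}$), have to be zero.
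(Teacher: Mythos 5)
Your proposal is correct and follows essentially the same route as the paper: both reduce the kernel and image identities to \Cref{lem:image-type-one-bubble-disjoint-whitney}, together with the global relation \cref{eq:d-preserves-basis-functions-global}. Two small points of comparison are worth noting. First, for the remaining pairwise disjointness in \cref{eq:dk-image-decomp}, you re-derive it cell by cell using the vanishing interior traces and the degree-of-freedom characterisation; the paper instead observes that $\d^k(u_\interface - u_\mathcal{I}) = 0 \in \d^k W^k$ so that \Cref{lem:image-type-one-bubble-disjoint-whitney} applies \emph{directly} to $u_\interface - u_\mathcal{I} \in \mathring{X}^{k,\rmone}_\interface \oplus \mathring{X}^{k,\rmone}_\interior$ and forces it to vanish. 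Your argument works but is an unnecessary detour: the lemma you already cite covers the case without re-opening the reference-cell machinery.

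Second, the circularity you flag for the directness of the four-term sum in the second line of \cref{eq:dk-kernel-decomp} is not actually present, so neither the induction on $k$ nor the safer re-derivation is needed. The three-term first line is automatically a direct sum, because its three summands are spanned by pairwise disjoint subsets of the global basis (Whitney forms are specific type-$\rmone$ basis functions, the type-$\rmtwo$ interface and interior spaces are spanned by the remaining type-$\rmtwo$ functions, and $\ker(\d^k; W^k) \subseteq W^k$). The passage to the second line is then a pure substitution: the set equalities $X^{k,\rmtwo}_\interface = \d^{k-1}\mathring{X}^{k-1,\rmone}_\interface$ and $\mathring{X}^{k,\rmtwo}_\interior = \d^{k-1}\mathring{X}^{k-1,\rmone}_\interior$ (from \cref{eq:d-preserves-basis-functions-global}), together with the lowest-order Hodge splitting $\ker(\d^k; W^k) = \d^{k-1} W^{k-1} \oplus \mathfrak{H}^k$, replace each direct summand of the first line by an equal set or by a finer internal direct sum of it. Directness of the four-term sum is therefore inherited from the first line; no disjointness of $\d^{k-1}\mathring{X}^{k-1,\rmone}_\interface$, $\d^{k-1}\mathring{X}^{k-1,\rmone}_\interior$, and $\d^{k-1} W^{k-1}$ at level $k-1$ has to be established independently.
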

\begin{proof}
	\noindent \textbf{Step 1: \cref{eq:dk-kernel-decomp}. } Thanks to 
	\cref{eq:d-preserves-basis-functions-global}, $X^{k, 
		\rmtwo}_{\interface} = \d^{k-1} \mathring{X}^{k-1, 
		\rmone}_{\interface}$ 
	and $\mathring{X}^{k, \rmtwo}_{\interior} = \d^{k-1} \mathring{X}^{k-1, 
		\rmone}_{\interior}$, and so $X^{k, \rmtwo}_{\interface} 
	\oplus 
	\mathring{X}^{k, \rmtwo}_{\interior} \subset \ker(\d^k; X^k)$. 
	
	Now suppose that $u \in 
	\ker(\d^k; X^k)$. Then, we decompose 
	$u = u_{\rmone} + u_{\rmtwo} + w$, where 
	$u_{\rmone} \in \mathring{X}^{k, \rmone}_{\interface} \oplus 
	\mathring{X}^{k, \rmone}_{\interior}$, 
	$u_{\rmtwo} \in X^{k, \rmtwo}_{\interface} \oplus 
	\mathring{X}^{k, \rmtwo}_{\interior}$, and $w \in W^k$. 
	Then,
	\begin{align*}
		0 = \d^k u = \d^k u_{\rmone} + \d^k u_{\rmtwo} + 
		\d^k w = \d^k u_{\rmone} + \d^k w,
	\end{align*}
	and so $\d^k u_{\rmone} \in \d^k W^k$.  By 
	\cref{lem:image-type-one-bubble-disjoint-whitney}, $u_{\rmone} \equiv 0$, 
	and so $\d^k w \equiv 0$. \Cref{eq:dk-kernel-decomp} now follows.
	
	\noindent \textbf{Step 2: \cref{eq:dk-image-decomp}. } Thanks to Step 1 
	and 
	\cref{lem:image-type-one-bubble-disjoint-whitney}, we have $\d^k X^k = 
	\d^k 
	\mathring{X}^{k, \rmone}_{\interface} \oplus \d^k \mathring{X}^{k, 
		\rmone}_{\interior} \oplus \d^k W^k$, and so it suffices to show that 
		$\d^k 
	\mathring{X}^{k, \rmone}_{\interface} \cap \d^k \mathring{X}^{k, 
		\rmone}_{\interior} 
	= \{0\}$. 
	Let  $u_{\interface} \in \mathring{X}^{k, \rmone}_{\interface}$ and 
	$u_{\interior} \in   
	\mathring{X}^{k, \rmone}_{\interior}$ satisfy $\d^k (u_{\interface} -
	u_{\interior}) = 0$. By 
	\cref{lem:image-type-one-bubble-disjoint-whitney}, 
	$u_{\interface} = u_{\interior}$, and so $u_{\interface} = 
	u_{\interior} = 0$.
\end{proof}

\subsection{Matrix properties on a physical cell}

Let $T \in \mesh$ and consider the local matrix $A$ corresponding to the 
restriction of $a^k(\cdot,\cdot)$ \cref{eq:weak-form} to the cell $T$:
\begin{align*}
	A_{ij} = (\phi_i, \beta \phi_j)_T + (\d^k \phi_i, \alpha \d^k \phi_j)_T, 
	\qquad i, j \in 1:\dim X^k(T),
\end{align*}
where $\{ \phi_i \}$ consists of all global basis functions supported on $T$. 
As 
in \cref{sec:reference-matrix-properties}, we partition $A$ into the 
contribution 
from interior and interface basis functions:
\begin{align*}
	A = \begin{bmatrix}
		A_{\interior \interior} & A_{\interior \interface} \\
		A_{\interface \interior} & A_{\interface \interface}
	\end{bmatrix}.
\end{align*}
If $T$ is an equilateral simplex, then 
\cref{lem:ref-element-matrix-properties} 
shows that $A_{\interior \interior}$ is diagonal and that the $A_{\interior 
	\interface}$ and $A_{\interface \interior}$ blocks only contain a mass 
	term. This 
structure is lost if $T$ is not an equilateral simplex due to the mapping 
$\pullback_T^k$. However, both the coupling and the mass terms only introduce 
weak coupling in the sense that if we define
\begin{align*}
	P := \begin{bmatrix}
		\diag(A_{\interior \interior}) & 0 \\
		0 & A_{\interface \interface}
	\end{bmatrix},
\end{align*} 
then $P$ is spectrally equivalent to $A$, as the following 
result quantifies.
\begin{lemma} \label{lem:physical-element-decouple}
	For $k \in 0:d-1$ and $T \in \mesh$, there exists a constant $C > 0$ 
	depending only on shape regularity, $d$, $k$, and $\mathrm{diam}(\Omega)$ 
	such that
	\begin{align}
		\label{eq:physical-element-decouple}
		\kappa_2\left(P^{-1} A \right) \leq C \max\left\{ 1, 
		\frac{\beta}{\alpha} h_T^2\right\}
	\end{align}
	where $\kappa_2(\cdot)$ denotes the condition number with respect to the 
	2-norm.
\end{lemma}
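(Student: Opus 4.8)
The plan is to establish a two-sided spectral equivalence between $A$ and $P$ and read off the condition number. Since $P$ is symmetric positive definite whenever $\beta>0$ (each diagonal entry of $A_{\interior\interior}$ is bounded below by $\beta\|\phi_i\|_T^2>0$, and $A_{\interface\interface}\succeq \beta M_{\interface\interface}\succ 0$), we have $\kappa_2(P^{-1}A)=\lambda_{\max}(P^{-1}A)/\lambda_{\min}(P^{-1}A)$, and the case $\beta=0$ follows by continuity. I would bound the two eigenvalues separately. For $\lambda_{\max}$ the work is minimal: because $A$ is positive semidefinite, the generalized Cauchy--Schwarz inequality gives $|\langle A_{\interior\interface}x_\interface,x_\interior\rangle|\le \langle A_{\interior\interior}x_\interior,x_\interior\rangle^{1/2}\langle A_{\interface\interface}x_\interface,x_\interface\rangle^{1/2}$, whence $\langle Ax,x\rangle\le 2(\langle A_{\interior\interior}x_\interior,x_\interior\rangle+\langle A_{\interface\interface}x_\interface,x_\interface\rangle)$; combined with Step~2 below this yields $\lambda_{\max}(P^{-1}A)\le C$ with $C$ depending only on shape regularity. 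Thus the entire $\max\{1,\tfrac{\beta}{\alpha}h_T^2\}$ factor must come from the lower bound on $\lambda_{\min}(P^{-1}A)$.

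\textbf{Step 2 (interior block).} I would first show $A_{\interior\interior}$ and $\diag(A_{\interior\interior})$ are spectrally equivalent with constants depending only on shape regularity (no $\beta/\alpha$). Pulling back to $\That$, $\langle A_{\interior\interior}x,x\rangle = |\det \jac_T|(\beta\|\hat v_\interior\|_{G_k}^2+\alpha\|\d^k\hat v_\interior\|_{G_{k+1}}^2)$, where $G_l$ denotes the constant symmetric positive definite metric tensor arising from the pullback of $l$-forms, with $\lambda(G_l)\sim h_T^{-2l}$ and $\kappa(G_l)$ bounded by shape regularity. Using the reference structure of $\hat M$ and $\hat K$ from \cref{lem:ref-element-matrix-properties}---in particular that the interior type-$\rmone$ bubbles are $L^2(\That)$-orthonormal in the $\d^k$ inner product with $\|\cdot\|_{L^2(\That)}^2$-norms equal to the eigenvalues $\lambda_{T,j}$, that the interior type-$\rmtwo$ functions satisfy $\d^k(\cdot)=0$, and that interior type-$\rmone$ and type-$\rmtwo$ bubbles are $L^2(\That)$-orthogonal---one sandwiches $\langle A_{\interior\interior}x,x\rangle$ (and identically $\langle \diag(A_{\interior\interior})x,x\rangle$) between constant multiples of $\sum_j (x_j^{\rmone})^2(\beta h_T^{d-2k}\lambda_{T,j}+\alpha h_T^{d-2k-2})+\sum_n (x_n^{\rmtwo})^2\,\beta h_T^{d-2k}$, invoking the elementary fact that $\|u+w\|_{G_l}^2\ge \kappa(G_l)^{-1}(\|u\|_{G_l}^2+\|w\|_{G_l}^2)$ whenever $u\perp_{L^2(\That)}w$ (apply Cauchy--Schwarz to the semidefinite form $(G_l-\lambda_{\min}(G_l)I)$). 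Consequently $P$ is spectrally equivalent to $\blockdiag(A_{\interior\interior},A_{\interface\interface})$ with shape-regularity constants.

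\textbf{Step 3 (lower bound).} It remains to show $\langle Ax,x\rangle\ge c\,\gamma^{-1}\langle Px,x\rangle$ with $\gamma:=\max\{1,\tfrac{\beta}{\alpha}h_T^2\}$. Writing $v=v_\interior+v_\interface$ and using Step~2, a Young/Schur-complement argument reduces this to bounding the coupling cosine $\theta:=\sup |a^k(v_\interior,v_\interface)_T|\big/(a^k(v_\interior,v_\interior)_T\,a^k(v_\interface,v_\interface)_T)^{1/2}$ by $1-c'\gamma^{-1}$, since then $a^k(v,v)_T\ge (1-\theta)(a^k(v_\interior,v_\interior)_T+a^k(v_\interface,v_\interface)_T)$. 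Pulling back, $a^k(v_\interior,v_\interface)_T=|\det\jac_T|(\beta(\hat v_\interior,\hat v_\interface)_{G_k}+\alpha(\d^k\hat v_\interior,\d^k\hat v_\interface)_{G_{k+1}})$, and I would exploit the reference-cell orthogonalities implied by \cref{lem:ref-element-matrix-properties}: on $\That$ the stiffness coupling vanishes ($\d^k$ of the interior span is $L^2(\That)$-orthogonal to $\d^k$ of the interface span, because the interior type-$\rmone$ degrees of freedom \cref{eq:type-I-dofs} annihilate interface basis functions), and the only surviving mass coupling is that of the interior type-$\rmone$ bubbles (the interior type-$\rmtwo$ span is $L^2(\That)$-orthogonal to the whole interface span). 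The change of metric turns each vanishing reference inner product $(u,w)_{L^2(\That)}=0$ into $|(u,w)_{G_l}|\le(1-\kappa(G_l)^{-1})\|u\|_{G_l}\|w\|_{G_l}$ by the same semidefinite Cauchy--Schwarz trick, so both the stiffness coupling and the interior-type-$\rmtwo$ mass coupling are strict contractions with shape-regularity-dependent constants strictly below $1$. For the remaining interior-type-$\rmone$ mass coupling I would use a $p$-robust Poincar\'e inequality on the type-$\rmone$ bubble spaces, $\|\hat v_\interior^{\rmone}\|_{L^2(\That)}\le C_P\|\d^k\hat v_\interior^{\rmone}\|_{L^2(\That)}$ (equivalently $\max_j\lambda_{T,j}\le C_P$ uniformly in $p$, which is the polynomial-robust Poincar\'e inequality for bubble complexes), together with $\lambda_{\max}(G_k)/\lambda_{\min}(G_{k+1})\le C h_T^2$, to bound it by $(C_P C\tfrac{\beta}{\alpha}h_T^2)^{1/2}$ times the geometric mean of the interior stiffness energy and the interface mass energy, hence by $(C_P C\tfrac{\beta}{\alpha}h_T^2)^{1/2}(a^k(v_\interior,v_\interior)_T\,a^k(v_\interface,v_\interface)_T)^{1/2}$. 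Assembling the three contributions and optimizing the Young parameter gives $1-\theta\gtrsim\gamma^{-1}$, hence $\lambda_{\min}(P^{-1}A)\gtrsim\gamma^{-1}$ and the claim.

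The main obstacle is Step~3: the non-orthogonality of the pullback metric destroys the clean reference-cell block structure, and one must show the induced couplings remain harmless. The genuinely delicate point is that the interior-type-$\rmone$ mass coupling is already nonzero on $\That$ and is controlled only through the $p$-robust Poincar\'e inequality on the bubble spaces; it is precisely this coupling that forces the unavoidable $\tfrac{\beta}{\alpha}h_T^2$ degradation, and keeping the Poincar\'e constant (equivalently $\max_j\lambda_{T,j}$) independent of $p$ is the essential analytic input that makes the estimate $p$-robust.
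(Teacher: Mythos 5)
Your proposal takes a genuinely different and more self-contained route than the paper, and it correctly identifies the same essential analytic input. The paper's proof of this lemma is a single sentence: it invokes \cref{thm:generic-subspace-decomp-interface-interior-split} on a single cell with the trivial one-subspace decomposition $X^k_1 = X^k(T)$, which in turn routes through the scaling transfer in \cref{cor:stable-decomp-interior-interface}, the diagonal-scaling lemma preceding it, and the reference-cell saddle-point analysis of \cref{thm:spectral-equiv-reference-stiffness}, where $p$-robustness is obtained from the inf-sup constant $\beta^k$ via a $p$-uniform right inverse of $\d^k$ due to Ern et al. You instead prove the bound directly as a two-step spectral equivalence $A\sim\blockdiag(A_{\interior\interior},A_{\interface\interface})\sim P$; your invocation of the $p$-robust Poincar\'e inequality on $\mathring{X}^{k,\rmone}(\That)$ (equivalently, $\max_j\lambda_{T,j}$ bounded uniformly in $p$) is the same analytic input as the paper's right inverse, your use of the reference-cell orthogonalities from \cref{lem:ref-element-matrix-properties} to neutralize the stiffness and type-$\rmtwo$ mass couplings is correct, and your diagnosis of the interior type-$\rmone$ mass coupling as the sole source of the $\gamma := \max\{1,\tfrac{\beta}{\alpha}h_T^2\}$ degradation is exactly right.

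There is, however, a gap in the ``assembling the three contributions and optimizing the Young parameter'' step. Your bound $|T_3|\le\bigl(C\tfrac{\beta}{\alpha}h_T^2\bigr)^{1/2}\bigl(\alpha\|\d^k v_\interior^\rmone\|_T^2\bigr)^{1/2}\bigl(\beta\|v_\interface\|_T^2\bigr)^{1/2}$ is vacuous (weaker than Cauchy--Schwarz) whenever $\tfrac{\beta}{\alpha}h_T^2\gtrsim 1/C$, so in that regime no Young parameter yields a positive coefficient in front of $\beta\|v_\interface\|_T^2$ after subtracting the three couplings; the ``strict contraction'' structure you rely on simply is not present in $T_3$, and absorbing the $Y_1^2$ defect into the $X_3^2$ surplus requires the $L^2(\That)$-orthogonality structure rather than a triangle inequality on the couplings. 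The fix, which keeps your overall route, is to not estimate $T_3$ at all: apply your semidefinite Cauchy--Schwarz trick directly to the full quadratic form $a^k(v,v)=\beta\|v_\interior^\rmone+v_\interior^\rmtwo+v_\interface\|_{G_k}^2+\alpha\|\d^k v_\interior^\rmone+\d^k v_\interface\|_{G_{k+1}}^2$, using the two orthogonalities $v_\interior^\rmtwo\perp_{L^2(\That)}(v_\interior^\rmone+v_\interface)$ and $\d^k v_\interior\perp_{L^2(\That)}\d^k v_\interface$ to peel off $\|v_\interior^\rmtwo\|_{G_k}^2$, $\|\d^k v_\interior^\rmone\|_{G_{k+1}}^2$, $\|\d^k v_\interface\|_{G_{k+1}}^2$, and $\|v_\interior^\rmone+v_\interface\|_{G_k}^2$; then recover $\|v_\interface\|_{G_k}^2\le 2\|v_\interior^\rmone+v_\interface\|_{G_k}^2+2\|v_\interior^\rmone\|_{G_k}^2$ and control the second term via the Poincar\'e inequality and $\lambda_{\max}(G_k)/\lambda_{\min}(G_{k+1})\lesssim h_T^2$. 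That is precisely where the $\max\{1,\tfrac{\beta}{\alpha}h_T^2\}$ factor appears, and the rest of your argument then closes.
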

\noindent The proof of \cref{lem:physical-element-decouple} is given in
\cref{sec:proof-physical-element-decouple}.

Provided that $\beta/\alpha$ is not too large, 
\cref{eq:physical-element-decouple} suggests that we may construct efficient 
preconditioners from existing ones by dropping the interior-interface 
coupling 
and by using a point-Jacobi (diagonal scaling) preconditioner on the interior 
basis functions. We combine this strategy with space decomposition 
preconditioners in the next section.

\section{Preconditioning} \label{sec:preconditioning}

In this section, we propose preconditioners for the Riesz maps  
\cref{eq:hgrad,eq:hcurl,eq:hdiv}. We describe our preconditioners in terms of 
space decompositions~\cite{ToWi06,xu92}. To capitalize on the weak coupling 
between 
interior and interface functions described in 
\cref{lem:physical-element-decouple}, we group the global interior and global 
interface functions together in their own subspaces:
\begin{align}
	\label{eq:global-space-interior-interface}
	X^{k}_{\interior} &:= \mathring{X}^{k, \rmone}_{\interior} 
	\oplus \mathring{X}^{k, \rmtwo}_{\interior} \quad \text{and} \quad 	
	X^k_{\interface} := W^k \oplus \mathring{X}^{k, \rmone}_{\interface} 
	\oplus 
	X^{k, \rmtwo}_{\interface}.
\end{align}
\noindent We also collect all $N^k_{\interior} := \dim 
X^k_{\interior}$ interior basis functions $\{ 
\phi_{\interior, \ell}^{k} :  \ell \in 1:N^k_{\interior} \}$.

To illustrate how the interior-interface decoupling strategy 
in \cref{lem:physical-element-decouple} may be applied to arbitrary space 
decompositions of $X^k$, we present the corresponding result for Schwarz 
methods with exact solvers. To this end, let $\{X_m^k\}_{m=1}^{M}$ be a 
subspace 
decomposition of $X^k$:
\begin{align} \label{eq:abstract-space-decomp}
	X^k = \sum_{m=1}^{M} X_m^k, \qquad \text{where} \quad  X_m \subseteq X^k 
	\quad 
	\forall m \in 1:M,
\end{align}
where each space is equipped with the $a^k(\cdot,\cdot)$ inner product. 
Then, we may decouple the interior and interface functions to arrive at the 
finer 
decomposition
\begin{align} \label{eq:abstract-space-decomp-split-all-interior}
	X^k = \sum_{m=1}^{M} X^k_m \cap X^k_{\interface} + X^k_{\interior},
\end{align}
where $X^k_m \cap X^k_{\interface}$ is equipped with $a^k(\cdot,\cdot)$ for 
$m 
\in 1:M$ and $X^k_{\interior}$ is equipped with a point-Jacobi solver
\begin{align*}
	a_{\interior}^k(u,v) := \sum_{\ell=1}^{N^k_{\interior}} a^k( u_{\ell}, 
	v_{\ell} ) 
	\qquad \forall u, v \in X^k_{\interior},
\end{align*}
where $u_{\interior} = \sum_{\ell=1}^{N^k_{\ell}} u_{\ell}$ 
with $u_{\ell} \in \spn\{\phi^k_{\ell} \}$ and similar notation for $v$. 
Note that the decomposition 
\cref{eq:abstract-space-decomp-split-all-interior} and associated bilinear 
forms 
is equivalent to the decomposition
\begin{align} \label{eq:abstract-space-decomp-split}
	X^k = \sum_{m=1}^{M} X^k_m \cap X^k_{\interface} + 
	\sum_{\ell=1}^{N^k_{\interior}} \spn\{ \phi^k_{\ell} \},
\end{align}
where each space is equipped with $a^k(\cdot,\cdot)$; we will use 
\cref{eq:abstract-space-decomp-split-all-interior} and 
\cref{eq:abstract-space-decomp-split} interchangeably. Then, 
standard arguments from the domain decomposition literature
(see e.g.~\cite[Remark on p.~178]{GO95} combined with 
\cref{cor:stable-decomp-interior-interface} and 
\cref{lem:stable-decomp-physical-interior} show
that the interior-interface splitting is, up to a constant 
independent of $h$, $p$, $\alpha$, and $\beta$, as stable as the original 
decomposition \cref{eq:abstract-space-decomp} provided that $\beta h^2 / 
\alpha 
\leq 1$:
\begin{theorem} \label{thm:generic-subspace-decomp-interface-interior-split}
	Suppose that the decomposition \cref{eq:abstract-space-decomp} is energy 
	stable: For every $u \in X^k$, 
	there exists $u_m \in X^k_m$, $m \in 1:M$ such that
	\begin{align} \label{eq:stable-decomposition-assumption}
		\sum_{m=1}^{M} u_m = u \quad \text{and} \quad \sum_{m=1}^{M} 
		a^k(u_m, u_m) 
		&\leq C_{\mathrm{stable}}(\alpha, \beta) a^k(u,u),
	\end{align}
	where $C_{\mathrm{stable}}(\alpha, \beta) \geq 1$ is independent of $u$.
	Then, the decomposition \cref{eq:abstract-space-decomp-split} is also 
	energy 
	stable: For every $u \in X^k$, there exists $u_m 
	\in X^k_m \cap X^k_{\interface}$, $m \in 1:M$, and $u_{\interior} \in 
	X^k_{\interior}$, such that
	\begin{subequations} \label{eq:abstract-space-decomp-split-stable}
		\begin{align}
			\sum_{m=1}^{M} u_m + u_{\interior} &= u, \\
			\sum_{m=1}^{M} 
			a^k(u_m, u_m) + a^k_{\interior}(u_{\interior}, u_{\interior})  
			&\leq C C_{\mathrm{stable}}(\alpha, \beta) \max\left\{ 1, 
			\frac{\beta}{\alpha} h^2 \right\} a(u, u),
		\end{align}
	\end{subequations}
	where $C$ depends only on shape regularity, $d$, and $k$. Moreover, 
	$a^k_{\interior}(\cdot,\cdot)$ is locally stable:
	\begin{align} \label{eq:abstract-space-decomp-interior-stability}
		a^k(v, v) \leq C a_{\interior}^k(v, v)  \qquad \forall v \in 
		X^k_{\interior}.
	\end{align}
\end{theorem}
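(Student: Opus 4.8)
The plan is to exploit the fact that the global basis of $X^k$ partitions into interior basis functions (supported on a single cell) and interface basis functions, so that $X^k = X^k_{\interface} \oplus X^k_{\interior}$ is a direct sum and the projection $Q\colon X^k \to X^k$ that annihilates every interior basis coefficient — with complementary projection $R := I - Q$ onto $X^k_{\interior}$ — is well defined. I will assume, as is the case for the vertex- and edge-star patch decompositions used later, that each $X^k_m$ is the span of a subset of the global basis functions; this guarantees $Q X^k_m \subseteq X^k_m \cap X^k_{\interface}$ and $R X^k_m \subseteq X^k_{\interior}$. The whole proof then reduces to two local (cell-by-cell) facts, after which the stable decomposition is obtained simply by applying $Q$ to an abstractly stable decomposition.

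\textbf{First ingredient: $Q$ and $R$ are bounded in the energy norm.} Since interior basis functions live on a single cell, $a^k(v,v) = \sum_{T \in \mesh} (v|_T)^\top A^T (v|_T)$ and both $Q$ and $R$ commute with restriction to $T$. Fix $T$, put $P^T := \blockdiag(\diag(A^T_{\interior \interior}), A^T_{\interface \interface})$, and let $\lambda_{\max}, \lambda_{\min}$ be the extreme generalized eigenvalues of the pencil $(A^T, P^T)$, so that $\lambda_{\max}/\lambda_{\min} \le C\max\{1, \frac{\beta}{\alpha} h_T^2\}$ by \cref{lem:physical-element-decouple}. Because $P^T$ is block diagonal across the interior/interface split and $Qv|_T$ has zero interior block,
\[
  (Qv|_T)^\top A^T (Qv|_T) \le \lambda_{\max} (Qv|_T)^\top P^T (Qv|_T) \le \lambda_{\max} (v|_T)^\top P^T (v|_T) \le \frac{\lambda_{\max}}{\lambda_{\min}} (v|_T)^\top A^T (v|_T),
\]
and the identical argument applied to $R$ (whose image has zero interface block) gives the same bound. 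Summing over $T$ yields $a^k(Qv, Qv) + a^k(Rv, Rv) \le C \max\{1, \frac{\beta}{\alpha} h^2\}\, a^k(v,v)$ for all $v \in X^k$.

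\textbf{Second ingredient plus assembly.} For $v \in X^k_{\interior}$ we again have $a^k(v,v) = \sum_T (v|_T)^\top A^T_{\interior \interior} (v|_T)$ and $a^k_{\interior}(v,v) = \sum_T (v|_T)^\top \diag(A^T_{\interior \interior}) (v|_T)$. By \cref{lem:ref-element-matrix-properties} the reference interior blocks $\hat{M}_{\interior \interior}$ and $\hat{K}_{\interior \interior}$ are diagonal, and since $\diag(\cdot)$ is linear a change of variables shows that $A^T_{\interior \interior}$ is uniformly spectrally equivalent to $\diag(A^T_{\interior \interior})$ with constants depending only on shape regularity, $d$, and $k$; summing over $T$ gives both $a^k(v,v) \le C\, a^k_{\interior}(v,v)$ (this is \cref{eq:abstract-space-decomp-interior-stability}) and $a^k_{\interior}(v,v) \le C\, a^k(v,v)$. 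Now, given $u \in X^k$, take a stable decomposition $u = \sum_{m=1}^M w_m$, $w_m \in X^k_m$, as in \cref{eq:stable-decomposition-assumption}, and set $u_m := Q w_m \in X^k_m \cap X^k_{\interface}$ and $u_{\interior} := R u \in X^k_{\interior}$; by linearity $\sum_m u_m + u_{\interior} = Qu + Ru = u$. The first ingredient and \cref{eq:stable-decomposition-assumption} give
\[
  \sum_{m=1}^M a^k(u_m, u_m) = \sum_{m=1}^M a^k(Qw_m, Qw_m) \le C \max\Big\{1, \frac{\beta}{\alpha} h^2\Big\} \sum_{m=1}^M a^k(w_m, w_m) \le C\, C_{\mathrm{stable}}(\alpha,\beta) \max\Big\{1, \frac{\beta}{\alpha} h^2\Big\}\, a^k(u,u),
\]
while the second ingredient followed by the bound on $R$ gives $a^k_{\interior}(u_{\interior}, u_{\interior}) \le C\, a^k(Ru, Ru) \le C \max\{1, \frac{\beta}{\alpha} h^2\}\, a^k(u,u)$. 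Adding the two and using $C_{\mathrm{stable}}(\alpha,\beta) \ge 1$ yields \cref{eq:abstract-space-decomp-split-stable}.

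\textbf{Main obstacle.} The one genuinely load-bearing step is the first ingredient: boundedness of the interface projection $Q$ in the energy norm requires the surrogate $P^T$ to be \emph{block} diagonal across the interior/interface partition, so that the interior--interface cross term is dropped rather than merely perturbed — this is precisely the structural payoff of \cref{lem:physical-element-decouple} and, underneath it, of the orthogonality-promoting basis. Everything else is bookkeeping, modulo the mild hypothesis, automatic for the patch decompositions considered, that each $X^k_m$ respects the interior/interface basis split.
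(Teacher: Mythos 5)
Your overall strategy mirrors the paper's: define the interface and interior projections $Q = I - \Pi^k_{\interior}$ and $R = \Pi^k_{\interior}$, apply them to a given energy-stable decomposition, and control the pieces cell-by-cell via a spectral-equivalence estimate for the block-diagonal surrogate. The paper's proof in \cref{sec:proof-generic-subspace-decomp-interface-interior-split} does exactly this.

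However, as written your argument is circular. Your ``first ingredient'' invokes \cref{lem:physical-element-decouple} to bound $\lambda_{\max}/\lambda_{\min}$ for the pencil $(A^T, P^T)$, but the paper's proof of \cref{lem:physical-element-decouple} (\cref{sec:proof-physical-element-decouple}) consists of a single line: ``the result follows from \cref{thm:generic-subspace-decomp-interface-interior-split} applied to a single element.'' So you are using the theorem to prove itself. The non-circular input you want is \cref{cor:stable-decomp-interior-interface}, specifically the element-wise estimate \cref{eq:stable-decomp-physical-stiffness} together with the diagonal equivalence \cref{eq:stable-decomp-physical-interior}. These rest on the reference-cell result \cref{thm:spectral-equiv-reference-stiffness}, which is where the real work lives: for $k=0$ the boundedness of $\Pi^k_{\interior}$ follows from a Poincar\'e inequality, while for $k>0$ it requires a saddle-point argument with a $p$-uniform inf-sup constant (a bounded right inverse of $\d^k$ on the interior bubbles, via \cite{Ern2024}). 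Your proposal treats this as already available via \cref{lem:physical-element-decouple}, thereby hiding the one genuinely nontrivial estimate. Once you replace the citation with \cref{eq:stable-decomp-physical-stiffness} and \cref{eq:stable-decomp-physical-interior}, your argument is essentially the paper's. As a minor point, your justification of the ``second ingredient'' (that $A^T_{\interior\interior}$ is uniformly spectrally equivalent to its diagonal ``since $\diag(\cdot)$ is linear'') is too thin; the actual argument is a scaling argument from the reference cell for $(\alpha,\beta) \in \{(0,1),(1,0)\}$, followed by linearity in $(\alpha,\beta)$, as in the lemma yielding \cref{eq:stable-decomp-physical-interior}.
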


One can apply \cref{thm:generic-subspace-decomp-interface-interior-split} to 
obtain condition number bounds on Schwarz methods associated to the subspace 
decomposition \cref{eq:abstract-space-decomp-split}, where the spaces are 
equipped with the solvers described above. For example, if the subspaces 
$\{X_m^k\}_{m=1}^{M}$ can be colored with $N_{\mathrm{color}}$ 
colors, then $\{X_m^k \cap X^k_{\interface}\}_{m=1}^{M}$ and 
$X^k_{\interior}$ can be be colored with $N_{\mathrm{color}}+1$ 
colors. Thanks to \cref{eq:abstract-space-decomp-split-stable}, 
\cref{eq:abstract-space-decomp-interior-stability}, and standard theory 
\cite[Theorem 2.7]{ToWi06}, the additive Schwarz preconditioner $P^{-1}$ 
associated to the subspace decomposition 
\cref{eq:abstract-space-decomp-split} satisfies
\begin{align*} 
	\kappa_2( P^{-1} A ) \leq C C_{\mathrm{stable}}(\alpha, \beta) 
	(N_{\mathrm{color}} + 1) 
	\max\left\{1, \frac{\beta}{\alpha} h^2 \right\},
\end{align*}
where $C$ depends only on shape regularity, $d$, and $k$. Similar results can 
be 
obtained for multiplicative and hybrid methods.

Note that a Schwarz method associated to the decomposition 
\cref{eq:abstract-space-decomp-split} is substantially cheaper than the 
Schwarz 
method associated to the original decomposition 
\cref{eq:abstract-space-decomp}, 
as the interior and interface functions are completely decoupled and a 
point-Jacobi preconditioner is employed on the interior basis functions. 
Decoupling the interior and interface functions is reminiscent of static 
condensation in which the $\mathcal{O}(p^3)$ interior degrees of freedom are 
exactly eliminated from the global system at a cost of $\mathcal{O}(p^9)$ 
operations, and problem is reformulated only on the interface degrees of 
freedom. 
The key advantage of the decomposition \cref{eq:abstract-space-decomp-split} 
is 
that we retain the decoupling of the interior and interface functions 
\textit{without actually performing static condensation}, and we only require 
a 
point-Jacobi preconditioner for the interior degrees of freedom. 

We now turn to 
specific space decompositions.

\subsection{Pavarino--Arnold--Falk--Winther decompositions}

To start, we consider the Pavarino--Arnold--Falk--Winther (PAFW) 
decomposition~\cite{pavarino93,arnold00,zaglmayr06,Schoberl08,brubeck24}. The
decomposition for $k$-forms is given by
\begin{equation} \label{eq:pafw}
	X^k = W^k
	+ \sum_{V\in \Delta_0(\mesh)} \left. X^k \right|_{\star V},
\end{equation}
which consists of two components. The first summand is the lowest-degree 
space and plays the role of the coarse grid, propagating coarse-scale 
information 
globally. The second summand restricts the fine space to small subspaces, 
each 
supported on the \emph{star} of a vertex $\star V$, where $V \in 
\Delta_0(\mesh)$ 
is a vertex of the mesh. The star operation is well-known in algebraic 
topology~\cite[\S 2]{munkres84}\cite{egorova09,pcpatch}; it gathers all 
entities 
that contain the given entity as a subentity. In particular, the star of a 
vertex 
returns all cells, faces, and edges that are adjacent to this vertex, as well 
as 
the vertex itself, and functions in $\left. X^k \right|_{\star V}$ have 
vanishing 
trace on $\partial (\star V)$. This second summand plays the role of a 
multigrid 
relaxation, or the subdomain solves in a domain decomposition solver. 

We denote the decomposition \cref{eq:pafw} by $\pafw{0}{X^k}$ to indicate 
that 
the 
space in the summation is $X^k$ restricted to the star of a 0-dimensional 
subsimplex. The decomposition satisfies the hypothesis of 
\cref{thm:generic-subspace-decomp-interface-interior-split} with 
$C_{\mathrm{stable}}$ independent of $\alpha$, $\beta$, 
$h$, and $p$ \cite{Falk2025local}. However, on a regular (Freudenthal 
\cite{Bey00}) mesh, an interior vertex star contains 14 edges, 36 faces, and  
24 cells, and the vertex-star patch problems become very expensive to solve, 
particularly at high order. Thus, we employ 
the subspace splitting in \cref{eq:abstract-space-decomp-split}:
\begin{align}\label{eq:approx-sc-pafw}
	X^k = W^k + \sum_{V\in \Delta_0(\mesh)} \left. X^k_{\interface} 
	\right|_{\star V} 
	+   
	\sum_{\ell=1}^{N^k_{\interior}} \spn\left\{ \phi_{\ell}^{k} 
	\right\},
\end{align}
which we refer to as $\pafw{0}{X^k_{\interface}}+\jacobi{X^k_{\interior}}$, 
where 
$\jacobi{X^k_{\interior}}$ indicates that a point-Jacobi preconditioner is 
employed on 
the space of interior functions.
\cref{thm:generic-subspace-decomp-interface-interior-split} shows that the 
decomposition \cref{eq:approx-sc-pafw} is also uniformly stable with respect 
to $h$ and $p$ 
and stable in $\alpha$ and $\beta$ provided that the ratio $\beta h^2/\alpha$ 
is bounded. 
Crucially, the dimension of the vertex-star problems scales like 
$\mathcal{O}(p^{d-1})$ for \cref{eq:approx-sc-pafw}, while it scales like  
$\mathcal{O}(p^d)$ for \cref{eq:pafw}. 
For example, at degree $p=4$ and $p=10$, 
the vertex star $\left. X^{0}\right|_{\star V}$ has dimension 175 and 3439, 
respectively, while $\left. X_{\interface}^{0}\right|_{\star V}$ has 
dimension 
151 and 1423, respectively.

\subsection{Hiptmair--Toselli space decompositions}

For $k > 0$, finer space decompositions are possible and more efficient. 
Arnold, 
Falk \& Winther \cite{arnold00} prove that one can employ stars over entities 
of 
dimension $k-1$ 
instead of dimension 0~, with the resulting patches being much 
smaller for $k > 1$. Furthermore, as observed by Hiptmair and 
Toselli~\cite{hiptmair97,hiptmair98,hiptmair00}, one can even use stars over 
entities of dimension $k$ by 
introducing a potential space. This approach splits $X^k$ as the sum of 
the 
image of the exterior derivative $\d^{k-1}$ on a potential space $Y^{k-1}$, 
and 
some other space $Z^k \subset X^k$:
\begin{equation} \label{eq:hiptmair-abstract}
	X^k = \d^{k-1} Y^{k-1} + Z^k.
\end{equation}
The challenge here is to choose $Y^{k-1}$ and $Z^k$ 
as small as possible;
e.g. excluding as much of the kernel of $\d^{k-1}$ as possible from $Y^{k-1}$ 
and 
as much of the kernel of $\d^k$ from $Z^k$. 

\subsubsection{Standard Hiptmair--Toselli decomposition}

The largest choice $Y^{k-1} = X^{k-1}$ and $Z^k = X^k$ motivates
the Hiptmair--Toselli space decomposition
\begin{equation} \label{eq:pavarino_hiptmair}
	X^k = W^k + \sum_{J \in \Delta_{k-1}(\mesh)} \left.\d^{k-1} 
	X^{k-1}\right|_{\star J} + \sum_{L \in \Delta_{k}(\mesh)} \left. 
	X^{k} \right|_{\star L},
\end{equation}
where introducing the potential space in \cref{eq:hiptmair-abstract} enables 
the 
splitting over stars of dimension $k-1$ and $k$, respectively. We denote this 
decomposition by $\ph{X^{k-1}}{X^k}$ to indicate that potential space is 
$X^{k-1}$ and that the $k$-dimensional stars are taken over $X^k$. As shown 
in \cite{hiptmair97,hiptmair98,hiptmair00}, the $\ph{X^{k-1}}{X^k}$ 
decomposition satisfies the 
hypothesis of \cref{thm:generic-subspace-decomp-interface-interior-split} 
with 
$C_{\mathrm{stable}}$ independent of $h$, $\alpha$, and $\beta$. As before, 
the 
$\ph{X^{k-1}}{X^k}$  
decomposition can be combined with 
\cref{thm:generic-subspace-decomp-interface-interior-split}, 
yielding the 
$\ph{X^{k-1}_{\interface}}{X^k_{\interface}}+\jacobi{X^k_{\interior}}$ 
decomposition.

Note that the $\ph{X^{k-1}}{X^k}$ decomposition involves an auxiliary
problem on the local potential space $X^{k-1}|_{\star J}$:
find $\psi \in X^{k-1}|_{\star J}$ such that
\begin{equation} \label{eq:local-potential-problem}
	a^k(\d^{k-1} \phi, \d^{k-1} \psi) = 
	(\d^{k-1} \phi, \beta\d^{k-1} \psi)_{\star J} = F(\d^{k-1} \phi) 
	\quad \Forall \phi \in \left.X^{k-1}\right|_{\star J},
\end{equation}
since $\d^k \circ \d^{k-1} = 0$. For $k > 1$, problem
\cref{eq:local-potential-problem} 
is singular since $\d^{k-2} X^{k-2} \subset \ker(\d^{k-1}; X^{k-1})$. To 
address this,
a pragmatic choice is to solve the regularized problem
\begin{equation} \label{eq:local-potential-problem-added-mass}
	(\phi, \epsilon \psi)_{\star J} +
	(\d^{k-1} \phi, \beta\d^{k-1} \psi)_{\star J} = F(\d^{k-1} \phi) 
	\quad \Forall \phi \in \left.X^{k-1}\right|_{\star J},
\end{equation}
for small $\epsilon$; in this work $\epsilon = 10^{-8} \beta$ is used. 
This regularization can be avoided by posing 
\cref{eq:local-potential-problem} on a reduced space, 
as discussed in the next section.

\subsubsection{Type-$\rmone$ based decomposition}

The finest possible choice for $Y^{k-1}$ and $Z^k$ in 
\cref{eq:hiptmair-abstract} is to ensure
\begin{align*}
	Y^{k-1} \cap \ker(\d^{k-1}; X^{k-1}) = \{0\} \quad \text{and} \quad Z^k 
	\cap 
	\d^{k-1} X^{k-1} = 
	\{0\}.
\end{align*}
However, this is not practical. As shown in \cref{eq:dk-kernel-decomp},
one component of $\ker(\d^{k-1}, X^{k-1})$ is the space of low-order harmonic 
forms $\mathfrak{H}^k$ \cref{eq:harmonic-forms} which depends on the topology 
of 
$\Omega$, and forms in $\mathfrak{H}^k$ have global support owing to the 
orthogonality condition in \cref{eq:harmonic-forms}.
Instead, a pragmatic intermediate choice motivated by 
\cref{lem:kernel-image-dk-global-splitting} is to employ
\begin{equation} \label{eq:hiptmair-reduction}
	Y^{k-1} = X^{k-1, \rmone} \quad \text{and} \quad Z^k = X^{k, \rmone}, 
\end{equation}
i.e.~to use only the span of the type-I basis functions. This choice excludes 
all 
type-$\rmtwo$ basis functions in the potential space $Y^{k-1}$ and in the 
space 
$Z^{k}$, and
this choice is simple to construct because our basis is already split in this 
fashion. Since $X^{0, \rmone} = X^0$, we only obtain a 
reduction compared to the largest choice of the potential space $Y^{k-1} = 
X^{k-1}$ if $k > 1$. We then arrive at the
$\ph{X^{k-1, \rmone}}{X^{k, \rmone}}$ decomposition:
\begin{equation} \label{eq:pavarino_hiptmair-novel}
	X^k = W^k + \sum_{J \in \Delta_{k-1}(\mesh)} \left.\d^{k-1} 
	X^{k-1, \rmone} \right|_{\star J} + \sum_{L \in \Delta_{k}(\mesh)} \left. 
	X^{k, \rmone} \right|_{\star L},
\end{equation}
which we believe to be novel.  

The local potential problems for the $\ph{X^{k-1, \rmone}}{X^{k, \rmone}}$ 
decomposition take the following form for $J \in \Delta_{k-1}(\mesh)$: Find 
$\psi 
\in X^{k-1, \rmone}|_{\star J}$ such that
\begin{equation} \label{eq:local-potential-problem-type-I}
	a^k(\d^{k-1} \phi, \d^{k-1} \psi) = 
	(\d^{k-1} \phi, \beta\d^{k-1} \psi)_{\star J} = F(\d^{k-1} \phi) 
	\quad \Forall \phi \in \left.X^{k-1, \rmone}\right|_{\star J}.
\end{equation}
Note that if $w \in W^{k-1}|_{\star J}$ satisfies $\d^{k-1} w = 0$, then $w 
\equiv 0$, and so problem \cref{eq:local-potential-problem-type-I} is 
invertible
thanks to 
\cref{lem:image-type-one-bubble-disjoint-whitney,%
	lem:kernel-image-dk-global-splitting}. In particular, problem 
\cref{eq:local-potential-problem-type-I} involves fewer degrees of freedom 
than 
problem \cref{eq:local-potential-problem-added-mass}. A similar approach was 
used 
by Zaglmayr 
\cite[\S 6.3]{zaglmayr06} to solve 
the curl-curl problem on the whole domain $\Omega$ by posing the problem only 
the type-$\rmone$ space $X^{1, \rmone}$.

\begin{remark}
	Since the type-$\rmone$ space $X^{k-1, \rmone}$ captures the image of the 
	exterior derivative $\d^{k-1}$ (i.e. $\d^{k-1} X^{k-1} = \d^{k-1} X^{k-1, 
		\rmone}$), the space decompositions $\ph{X^{k-1}}{X^{k}}$ and 
		$\ph{X^{k-1, 
			\rmone}}{X^{k}}$ are identical and thus satisfy the hypotheses of 
	\cref{thm:generic-subspace-decomp-interface-interior-split} with the same 
	stability constant. The only distinction we make in the 
	notation is the form of the potential problems 
	\cref{eq:local-potential-problem-added-mass} and 
	\cref{eq:local-potential-problem-type-I}. So, one could always use the 
	type-$\rmone$ space in the potential space and avoid solving the 
	singular problem \cref{eq:local-potential-problem}.
\end{remark}

Combining \cref{eq:pavarino_hiptmair-novel} with 
\cref{thm:generic-subspace-decomp-interface-interior-split} yields the 
$\ph{X^{k-1, \rmone}_{\interface}}{X^{k, 
		\rmone}_{\interface}}+\jacobi{X^k_{\interior}}$ decomposition:
\begin{equation} \label{eq:hcurl-decomp}
	X^k =  W^k 
	+ \sum_{J\in \Delta_{k-1}(\mesh)} \left. \d^{k-1} X^{k-1, 
	\rmone}_{\interface} 
	\right|_{\star J}
	+ \sum_{L\in \Delta_k(\mesh)} \left. X^{k, \rmone}_{\interface} 
	\right|_{\star 
		L} 
	+ 
	\sum_{\ell=1}^{N^k_{\interior}} \spn\left\{ \phi_{ 
		\ell}^{k} 
	\right\}.
\end{equation}

\noindent In \Cref{tab:Hcurl-patch-size} we record the dimension of the 
largest 
subspace on 
vertices and edges in the proposed space decompositions for $\Hcurl$. 
We again observe that applying 
\cref{thm:generic-subspace-decomp-interface-interior-split} reduces patch 
size, 
that $\ph{X^0}{X^1}$ and 
$\ph{X^0_{\interface}}{X^1_{\interface}}+\jacobi{X^1_{\interior}}$ involve 
much smaller patches than 
$\pafw{0}{X^1}$ and 
$\pafw{0}{X^1_{\interface}}+\jacobi{X^1_{\interior}}$, 
respectively, and that using only the type-$\rmone$ spaces as in 
$\ph{X^0}{X^{1, \rmone}}$ and 
$\ph{X^0_{\interface}}{X^{1,\rmone}_{\interface}}+\jacobi{X^1_{\interior}}$  
yields a further substantial 
improvement to the size of the edge problems solved.

\begin{table}[tbhp]
	\caption{
		Maximum subspace sizes for solving the $\Hcurl$ Riesz map with the 
		$\pafw{0}{X^1}$ \cref{eq:pafw}, 
		$\pafw{0}{X^1_{\interface}}+\jacobi{X^1_{\interior}}$ 
		\cref{eq:approx-sc-pafw}, 
		$\ph{X^0}{X^1}$ \cref{eq:pavarino_hiptmair}, 
		$\ph{X^0_{\interface}}{X^1_{\interface}}+\jacobi{X^1_{\interior}}$, 
		$\ph{X^0}{X^{1, \rmone}}$ \cref{eq:pavarino_hiptmair-novel}, and
		$\ph{X^0_{\interface}}{X^{1,\rmone}_{\interface}}+\jacobi{X^1_{\interior}}$
		\cref{eq:hcurl-decomp}
		decompositions on a regular mesh for 
		the \Nedelec{} space of the first kind. We distinguish between the 
		maximal vertex-star patch and maximal edge-star patch. The edge patch 
		includes 6 cell interiors, 6 faces, and one edge.
	}
		\label{tab:Hcurl-patch-size}
		\begin{center}
			\setlength{\tabcolsep}{4pt}
			\begin{tabular}{rlrr}
				\toprule
				decomposition & $X^1$ & max.~vertex dim & max.~edge dim\\
				\midrule
				$\pafw{0}{X^1}$ & $\Ned_{4}$ & 776 & -\\
				$\pafw{0}{X^1_{\interface}}+\jacobi{X^1_{\interior}}$ & 
				$\Ned_{4}$ & 
				488 & -\\
				$\ph{X^0}{X^1}$ & $\Ned_{4}$ & 175 & 148\\
				$\ph{X^0_{\interface}}{X^1_{\interface}}+\jacobi{X^1_{\interior}}$
				
				& $\Ned_{4}$ & 151 & 76\\
				$\ph{X^0}{X^{1, \rmone}}$ & $\Ned_{4}$ 
				& 175 & 121 \\
				PH($X_{\interface}^0$, 
				$X_{\interface}^{1,\rmone}$)$+$J($X^1_{\interior}$) & 
				$\Ned_{4}$ 
				& 151 & 55\\
				\midrule
				$\pafw{0}{X^1}$ & $\Ned_{10}$& 12020 & -\\
				$\pafw{0}{X^1_{\interface}}+\jacobi{X^1_{\interior}}$ & 
				$\Ned_{10}$ 
				& 3380 & -\\
				$\ph{X^0}{X^1}$ & $\Ned_{10}$ & 3439 & 2710\\
				$\ph{X^0_{\interface}}{X^1_{\interface}}+\jacobi{X^1_{\interior}}$
				
				& $\Ned_{10}$ & 1423 & 550\\
				$\ph{X^0}{X^{1, \rmone}}$ & $\Ned_{10}$ & 3439 & 1981 \\
				PH($X_{\interface}^0$, 
				$X_{\interface}^{1,\rmone}$)$+$J($X^1_{\interior}$) & 
				$\Ned_{10}$ 
				& 1423 & 325\\
				\bottomrule
			\end{tabular}
		\end{center}
\end{table}

\subsubsection{Stability of $\ph{X^{k-1,\rmone}}{X^{k,\rmone}}$}

We arrived at the $\ph{X^{k-1,\rmone}}{X^{k,\rmone}}$ decomposition by an 
algebraic decomposition of $\d^{k-1} X^{k-1}$ and $X^{k}$, which does not 
necessarily imply that $\ph{X^{k-1,\rmone}}{X^{k,\rmone}}$ is a stable 
decomposition of $X^k$. The next result shows that 
$\ph{X^{k-1,\rmone}}{X^{k,\rmone}}$ is indeed a stable decomposition of $X^k$ 
with respect to the mesh size $h$.
\begin{lemma} \label{lem:ph-typeI-typeII-stable}
	Let $k \in 0:d-1$ and $C_{\mathrm{HT}}$ denote the smallest constant 
	independent of $\alpha$ and $\beta$ such that the $\ph{X^{k-1}}{X^{k}}$ 
	decomposition \cref{eq:pavarino_hiptmair} satisfies the stability 
	hypothesis of 
	\cref{thm:generic-subspace-decomp-interface-interior-split}. Then, there 
	exists a constant $C_{\rmone/\rmtwo}$ independent of $h$, $\alpha$, and 
	$\beta$ such that	
	the $\ph{X^{k-1,\rmone}}{X^{k,\rmone}}$ decomposition 
	\cref{eq:pavarino_hiptmair-novel} satisfies the stability hypothesis of 
	\cref{thm:generic-subspace-decomp-interface-interior-split} with 
	$C_{\mathrm{stable}}(\alpha, \beta) \leq C_{\rmone/\rmtwo} 
	C_{\mathrm{HT}}$.
\end{lemma}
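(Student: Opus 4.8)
The plan is to obtain the required decomposition by modifying a stable $\ph{X^{k-1}}{X^{k}}$ decomposition piece by piece. By the Remark preceding the statement, $\d^{k-1} X^{k-1} = \d^{k-1} X^{k-1,\rmone}$, so $\ph{X^{k-1}}{X^{k}}$ and $\ph{X^{k-1,\rmone}}{X^{k}}$ coincide as subspace decompositions; hence the stability hypothesis provides, for each $u \in X^k$, elements $w \in W^k$, $\psi_J \in X^{k-1,\rmone}|_{\star J}$ ($J \in \Delta_{k-1}(\mesh)$), and $z_L \in X^k|_{\star L}$ ($L \in \Delta_k(\mesh)$) with
\[ u = w + \sum_{J} \d^{k-1}\psi_J + \sum_{L} z_L, \]
\[ a^k(w,w) + \sum_{J} a^k(\d^{k-1}\psi_J, \d^{k-1}\psi_J) + \sum_{L} a^k(z_L, z_L) \le C_{\mathrm{HT}}\, a^k(u,u), \]
where $a^k(\d^{k-1}\psi_J, \d^{k-1}\psi_J) = (\d^{k-1}\psi_J, \beta\, \d^{k-1}\psi_J)$ because $\d^k \circ \d^{k-1} = 0$. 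The only summands that need not lie in the subspaces of $\ph{X^{k-1,\rmone}}{X^{k,\rmone}}$ in \cref{eq:pavarino_hiptmair-novel} are the $z_L \in X^k|_{\star L}$.

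First I would split each $z_L = z_L^{\rmone} + z_L^{\rmtwo}$ along the direct sum $X^k|_{\star L} = X^{k,\rmone}|_{\star L} \oplus X^{k,\rmtwo}|_{\star L}$ inherited from \cref{eq:typeI-typeII-direct-sum}; then $z_L^{\rmone} \in X^{k,\rmone}|_{\star L}$ is already admissible. For the type-$\rmtwo$ part, the global identity \cref{eq:d-preserves-basis-functions-global} writes $z_L^{\rmtwo} = \d^{k-1}\chi_L$, where $\chi_L$ is the corresponding combination of the type-$\rmone$ $(k-1)$-form basis functions attached to the subsimplices of $\star L$ of dimension $\ge k$; since each such subsimplex contains $L$, one has $\chi_L \in X^{k-1,\rmone}|_{\star L}$. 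Fixing once and for all a $(k-1)$-dimensional face $J(L)$ of each $L \in \Delta_k(\mesh)$, the inclusion $\star L \subseteq \star J(L)$ gives $\chi_L \in X^{k-1,\rmone}|_{\star J(L)}$, so $\chi_L$ may be absorbed into the potential term by setting $\tilde\psi_J := \psi_J + \sum_{L : J(L) = J} \chi_L$. This yields $u = w + \sum_{J} \d^{k-1}\tilde\psi_J + \sum_{L} z_L^{\rmone}$, a decomposition of exactly the form required by $\ph{X^{k-1,\rmone}}{X^{k,\rmone}}$.

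Next I would control the energies. Because at most $C = C(\text{shape regularity}, d, k)$ simplices $L$ can map to a given $J$, and their stars satisfy $\star L \subseteq \star J$, Cauchy--Schwarz over these overlapping patches gives
\[ a^k(\d^{k-1}\tilde\psi_J, \d^{k-1}\tilde\psi_J) \le C\, a^k(\d^{k-1}\psi_J, \d^{k-1}\psi_J) + C \sum_{L : J(L) = J} a^k(z_L^{\rmtwo}, z_L^{\rmtwo}). \]
Summing over $J$ (each $L$ has a unique $J(L)$), the whole estimate reduces to the local bound
\[ a^k(z_L^{\rmone}, z_L^{\rmone}) + a^k(z_L^{\rmtwo}, z_L^{\rmtwo}) \le C\, a^k(z_L, z_L), \qquad L \in \Delta_k(\mesh), \]
with $C$ depending only on shape regularity, $d$, $k$, and the polynomial degree. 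Granting it, the energy of the new decomposition is at most $C\bigl[a^k(w,w) + \sum_{J} a^k(\d^{k-1}\psi_J, \d^{k-1}\psi_J) + \sum_{L} a^k(z_L, z_L)\bigr] \le C\, C_{\mathrm{HT}}\, a^k(u,u)$, which is the claim with $C_{\rmone/\rmtwo} = C$.

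The main obstacle is this local bound, and the key simplifying observation is how $\d^k$ acts on the two summands: by \cref{lem:d-preserves-basis-functions} every type-$\rmtwo$ basis function is annihilated by $\d^k$, so $\d^k z_L^{\rmtwo} = 0$ and $\d^k z_L^{\rmone} = \d^k z_L$, and the stiffness contribution $(\alpha\, \d^k \cdot, \d^k \cdot)_{\star L}$ agrees on both sides. It therefore suffices to show $\|z_L^{\rmone}\|_{L^2(\star L)}^2 \le C\, \|z_L\|_{L^2(\star L)}^2$, since then $\|z_L^{\rmtwo}\|_{L^2(\star L)} \le \|z_L\|_{L^2(\star L)} + \|z_L^{\rmone}\|_{L^2(\star L)}$ by the triangle inequality. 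I would prove this cell by cell: for $T \subset \star L$ the restriction $z_L^{\rmone}|_T$ equals $\pullback_T^k$ applied to the type-$\rmone$ component of $(\pullback_T^k)^{-1}(z_L|_T)$ in the direct sum $X^k(\That) = X^{k,\rmone}(\That) \oplus X^{k,\rmtwo}(\That)$ of the spans of the type-$\rmone$ and type-$\rmtwo$ reference basis functions; that oblique projection is $L^2(\That)$-bounded with a constant depending only on $p$, $d$, $k$ and the fixed equilateral geometry (finite simply because the sum is direct), and $\pullback_T^k$ distorts $L^2$ norms by at most bounded powers of the condition number of the Jacobian of $F_T$, which is controlled by shape regularity. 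Summing over the finitely many cells of $\star L$ completes the argument. The constant produced is independent of $h$, $\alpha$, and $\beta$, as asserted, though --- as for $C_{\mathrm{HT}}$ itself --- it need not be independent of $p$.
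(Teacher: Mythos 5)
Your proof is correct and follows the same overall structure as the paper's: split the $k$-star summands $z_L$ along the type-$\rmone$/type-$\rmtwo$ direct sum, absorb the type-$\rmtwo$ parts into the potential ($(k-1)$-star) terms, and control the energy with Cauchy--Schwarz plus a local $H(\d^k)$-stability bound for the type-$\rmone$/type-$\rmtwo$ splitting on patches. Two technical steps differ. First, to exhibit $z_L^{\rmtwo}$ as $\d^{k-1}$ of an admissible potential, you use the basis-level identity $\phi_{S,n}^{k,\rmtwo} = \d^{k-1}\phi_{S,n}^{k-1,\rmone}$ from \cref{eq:d-preserves-basis-functions-global} coefficient by coefficient, whereas the paper invokes exactness of the discrete de~Rham subcomplex on the contractible patch $\star L$ (via \cite[Corollary~2]{Licht2017complexes}); your route is slightly more elementary and avoids the contractibility argument. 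Second, you push $\chi_L$ into a single fixed face $J(L) \in \Delta_{k-1}(L)$, while the paper distributes $u_{L,\rmtwo}$ equally over all $k+1$ faces of $L$; both are valid and lead to a finite-overlap counting argument via shape regularity. Finally, for the local $L^2$ stability $\|z_L^{\rmone}\|_{\star L}^2 \le C\|z_L\|_{\star L}^2$ you re-derive the bound from the boundedness of the oblique projection on $\That$ and the Jacobian distortion of the pullback, whereas the paper appeals to its \cref{cor:stable-decomp-interior-interface}, which tracks the constant explicitly as $C_{\rmone/\rmtwo} = C/\omega_{1,p}^{k,\rmone/\rmtwo}$; the paper's version therefore identifies the $p$-dependence more precisely (see \cref{sec:typeI-typeII-constant-p}), while yours only records existence of a $p$-dependent constant, which nevertheless suffices for the statement.
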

\noindent The proof of \cref{lem:ph-typeI-typeII-stable} is given in 
\cref{sec:proof-ph-typeI-typeII-stable}.

The constant $C_{\rmone,\rmtwo}$ in \cref{lem:ph-typeI-typeII-stable} may 
depend 
on the polynomial degree. Indeed, the numerical experiments below in 
\cref{sec:results} suggest that using the Hiptmair--Toselli  
$\ph{X^{k-1,\rmone}}{X^{k,\rmone}}$ 
decomposition to construct preconditioners does not provide $p$-independent 
iteration counts, which in turn suggests that $C_{\rmone,\rmtwo}$ depends on  
$p$. A more detailed discussion of $C_{\rmone,\rmtwo}$ is given in 
\cref{sec:typeI-typeII-constant-p}, where further experiments suggest that
$C_{\rmone,\rmtwo} \to \infty$ as $p \to \infty$.

\subsection{Decomposition for $\Hdiv$}
\label{sec:hdiv-decompositions}

For $d=3$ and $k=2$, the  $\ph{X^{1, \rmone}}{X^{2, \rmone}}$ decomposition 
\cref{eq:pavarino_hiptmair-novel} simplifies as 
\cref{eq:hdiv-dofs-no-curl,eq:global-space-typeI} 
show that $X^{2,\rmone}_{\interface} = W^{2}$. The last term in 
\cref{eq:hcurl-decomp} then becomes a splitting of $W^{2}$, 
which is not necessary as $W^2$ is already included in 
\cref{eq:pavarino_hiptmair-novel}. Combining this fact with 
\cref{thm:generic-subspace-decomp-interface-interior-split} leads to the 
following decomposition:
\begin{align} \label{eq:hdiv-ph-reduced}
	X^2 = W^2 + \sum_{E \in \Delta_{1}(\mesh)} \left. \curl 
	X_{\interface}^{1, \rmone} \right|_{\star E} + 
	\sum_{\ell=1}^{N^2_{\interior}} \spn\left\{ \phi_{\ell}^{2} \right\}.
\end{align}
Moreover, the interface potential problem on the edge star $E \in 
\Delta_1(\mesh)$
\begin{align} \label{eq:hdiv-potential-problem-edge}
	\phi \in \left. X_{\interface}^{1, \rmone} \right|_{\star E} : \qquad 
	(\curl 
	\phi, \beta 
	\curl \psi) = F(\curl \psi) \qquad \forall \psi \in \left. 
	X_{\interface}^{1, 
		\rmone} 
	\right|_{\star E}
\end{align}
and the full interface problem on the edge star
\begin{align} \label{eq:hdiv-problem-edge}
	u \in \left. X_{\interface}^{2} \right|_{\star E} : \qquad (u, \beta v) + 
	(\div u, \alpha 
	\div v) = F(v) \qquad \forall v \in \left. X_{\interface}^{2} 
	\right|_{\star 
		E}
\end{align}
have a comparable number of interface unknowns. In particular, 
\cref{eq:d-preserves-basis-functions-global,eq:dk-image-decomp} give
\begin{align*}
	\left. X_{\interface}^{2} \right|_{\star E} = \left. X_{\interface}^{2, 
		\rmone} \right|_{\star E} \oplus \left. X_{\interface}^{2, \rmtwo} 
	\right|_{\star E} = \left. W^2 \right|_{\star E} + \left. \curl 
	X_{\interface}^{1, \rmone} \right|_{\star E} = \left. W^2 \right|_{\star 
	E} 
	\oplus \left. \curl 
	\mathring{X}_{\interface}^{1, \rmone} \right|_{\star E},
\end{align*}
and so \cref{eq:hdiv-problem-edge} has only $|\Delta_2(\star E)|-1$ more 
unknowns
than \cref{eq:hdiv-potential-problem-edge}. We can thus avoid using the 
potential space $X^{1, \rmone}_{\interface}$ altogether without substantially 
increasing the patch size by replacing $\left. \curl X_{\interface}^{1, 
\rmone} 
\right|_{\star E}$ with $X_{\interface}^{2}$ to 
obtain
\begin{equation} \label{eq:hdiv-decomp}
	X^{2} = W^{2} +
	\sum_{E\in \Delta_{1}(\mesh)} \left. X^{2}_{\interface} \right|_{\star E}
	+ \ \sum_{\ell=1}^{N^{2}_{\interior}} \spn\{ \phi_{\ell}^{2} \}.
\end{equation}
We denote this decomposition by 
$\pafw{1}{X^2_{\interface}}+\jacobi{X^2_{\interior}}$, as it is of the form 
\cref{eq:approx-sc-pafw}, with 
the only difference being that edge stars are used instead of vertex stars. 
As 
shown in \cite{arnold00}, $\pafw{1}{X^2}$ satisfies the assumptions of 
\cref{thm:generic-subspace-decomp-interface-interior-split} with 
$C_{\mathrm{stable}}$ independent of $h$, $\alpha$, and $\beta$, and so 
\cref{eq:hdiv-decomp} is uniformly stable 
with respect to $h$, $\alpha$, and $\beta$ provided that $\beta h^2/\alpha$ 
is 
bounded. We remark that the $p$-stability of $\pafw{1}{X^2}$, and 
$\pafw{k-1}{X^k}$ more generally, remains an open problem for $k > 1$.

In \Cref{tab:Hdiv-patch-size} we record the dimension of the largest subspace 
on 
vertices, edges, and faces in the proposed space decompositions for $\Hdiv$. 
We 
observe that applying 
\cref{thm:generic-subspace-decomp-interface-interior-split} drastically 
reduces 
the patch size for each decomposition, while 
$\ph{X^{1,\rmone}_{\interface}}{X^{2,\rmone}_{\interface}} + 
\jacobi{X^2_{\interior}}$
and 	
$\pafw{1}{X^2_{\interface}}+\jacobi{X^2_{\interior}}$ have the smallest 
subproblems 
of about the same size.

\begin{table}[tbhp]
	{
		\caption{
			Maximum subspace sizes for solving the $\Hdiv$ Riesz map with the 
			$\pafw{0}{X^2}$ \cref{eq:pafw}, 
			$\pafw{0}{X^2_{\interface}}+\jacobi{X^2_{\interior}}$ 
			\cref{eq:approx-sc-pafw}, 
			$\ph{X^1}{X^2}$ \cref{eq:pavarino_hiptmair}, 
			$\ph{X^{1,\rmone}_{\interface}}{X^{2,\rmone}_{\interface}} + 
			\jacobi{X^2_{\interior}}$ 
			\cref{eq:hdiv-ph-reduced}, and 
			$\pafw{1}{X^2_{\interface}}+\jacobi{X^2_{\interior}}$ 
			\cref{eq:hdiv-decomp} 
			decompositions on a regular mesh for the Raviart--Thomas space. 
			The 
			face patch includes 2 
			cell interiors, and one face. 
		}
		\label{tab:Hdiv-patch-size}
		\begin{center}
			\setlength{\tabcolsep}{3.7pt}
			\begin{tabular}{rlrrr}
				\toprule
				decomposition & $X^2$ & max.~vertex dim & max.~edge dim & 
				max.~face dim\\
				\midrule
				$\pafw{0}{X^2}$ & $\RT_{4}$ & 1080 & - & -\\
				$\pafw{0}{X^2_{\interface}}+\jacobi{X^2_{\interior}}$ & 
				$\RT_{4}$ 
				& 
				360 & - & - \\
				$\ph{X^1}{X^2}$ & $\RT_{4}$ & - & 148 & 70 \\
				$\ph{X^{1,\rmone}_{\interface}}{X^{2,\rmone}_{\interface}} + 
				\jacobi{X^2_{\interior}}$
				& $\RT_{4}$ & - & 55 & - \\
				$\pafw{1}{X^2_{\interface}}+\jacobi{X^2_{\interior}}$  & 
				$\RT_{4}$ & - & 60 & -\\
				\midrule
				$\pafw{0}{X^2}$ & $\RT_{10}$& 13860 & - & -\\
				$\pafw{0}{X^2_{\interface}}+\jacobi{X^2_{\interior}}$ & 
				$\RT_{10}$ & 
				1980 & - & -\\
				$\ph{X^1}{X^2}$ & $\RT_{10}$ & - & 2710 & 1045 \\
				$\ph{X^{1,\rmone}_{\interface}}{X^{2,\rmone}_{\interface}} + 
				\jacobi{X^2_{\interior}}$
				& $\RT_{10}$ & - & 325 & - \\
				$\pafw{1}{X^2_{\interface}}+\jacobi{X^2_{\interior}}$  & 
				$\RT_{10}$ & - & 330 & - \\
				\bottomrule
			\end{tabular}
		\end{center}
	}
\end{table}

\subsection{Computational cost}

Our solver is globally matrix-free, but requires assembly of interface
submatrices over patches of mesh entities of dimension at most $d-1$.
In order to avoid computing and storing the matrix entries $a^k(\phi^k_j,
\phi^k_i)$, we instead employ a Krylov method only requiring the computation 
of
$a^k(u, \phi^k_i)$ for a given $u\in X^k$.  
Given a quadrature rule with $\bigo{p^d}$ quadrature points,
and without sum-factorization, this can
be done in $\bigo{p^{2d}}$ operations and $\bigo{p^d}$ storage per cell.
However, to form the basis there is an offline cost of $\bigo{p^{3d}}$ flops
and $\bigo{p^{2d}}$ storage to solve the eigenproblem
\cref{eq:ref-eval-problem-x-aug} and tabulate the basis at quadrature points.
Nevertheless, the tabulation is precomputed and stored, so the $\bigo{p^9}$ 
flops
are only incurred once on the reference cell, and the $\bigo{p^6}$ memory 
requirement does not scale with the number of cells.

The interface submatrices cost $\bigo{p^{3d-2}}$ flops to assemble and have 
$\bigo{p^{2(d-1)}}$ entries. The solution of each interface problem involves 
the 
factorization of the submatrix, which requires $\bigo{p^{3(d-1)}}$ flops and 
$\bigo{p^{2(d-1)}}$ storage per patch. 
At each Krylov iteration, solving the interface 
subproblems incurs $\bigo{p^{2(d-1)}}$ flops per patch. 
In three dimensions, the  costs of operator application and preconditioner 
setup
and application amount to $\bigo{p^6}$ flops and $\bigo{p^4}$ storage.

This is illustrated in \Cref{fig:complexities}, where we record the total
number of floating point operations, memory storage, and matrix nonzeros
required to solve the Riesz maps using the space decompositions with and
without the interior-interface decomposition.  
For $\Hgrad$ we use $\pafw{0}{X^0_\interface} + \jacobi{X^0_\interior}$,
for $\Hcurl$ we use $\ph{X^0_\interface}{X^{1,\rmone}_\interface} + 
\jacobi{X^2_\interior}$, 
and for $\Hdiv$ we use $\pafw{1}{X^2_\interface} + \jacobi{X^2_\interior}$.  
The analogous decompositions before interior-interface splitting
result in $\bigo{p^9}$ flops and $\bigo{p^6}$ storage and nonzeros.

\section{Numerical results} \label{sec:results}

All of the above bases and preconditioners have been implemented in Firedrake 
\cite{firedrake}, which we use to perform the numerical experiments. For 
brevity we only present results for spaces of the first kind.

\subsection{Riesz maps} \label{sec:results-riesz}

We first discretize each of the Riesz maps \cref{eq:weak-form} on an initial
Freudenthal mesh of $\Omega=(0, 1)^3$ with three elements along each edge 
(see \cref{fig:freudenthal} for the case of one element per edge) with pure 
Neumann boundary conditions ($\Gamma_N = \partial \Omega$). For
each Riesz map, we fix $\beta=1$, vary $\alpha \in \{10^{3}, 1, 10^{-3}\}$ and
$p \in 3:7$, and perform two levels of mesh refinement following
\cite{Bey00}. Note that the meshes resulting from refinement have a smaller
shape regularity constant compared to the original Freudenthal mesh, but the
shape regularity constant remains bounded away from zero as more refinements
are performed. We take the right hand side of the resulting linear system to 
be
a random vector and solve with the preconditioned conjugate gradient (PCG) 
method 
with a relative tolerance of $10^{-8}$.

\begin{figure}[htb]
	\centering
	\begin{subfigure}{0.45\linewidth}
		\centering
		\begin{tikzpicture}[3d view={120}{10}, scale=2]
			\coordinate (v0) at (-1, -1, -1) {};
			\coordinate (v1) at (1, -1, -1) {};
			\coordinate (v2) at (-1, 1, -1) {};
			\coordinate (v3) at (1, 1, -1) {};
			\coordinate (v4) at (-1, -1, 1) {};
			\coordinate (v5) at (1, -1, 1) {};
			\coordinate (v6) at (-1, 1, 1) {};
			\coordinate (v7) at (1, 1, 1) {};
			\coordinate (v1v5) at ($(v1)!0.5!(v5)$);
			\coordinate (v3v6) at ($(v3)!0.5!(v6)$);
			
			\draw (v0) -- (v1) (v0) -- (v3) (v0) -- (v7);
			\draw (v1) -- (v3) (v1) -- (v7);
			\draw (v3) -- (v7);
			
			\draw (v0) -- (v2) (v0) -- (v3) (v0) -- (v7);
			\draw (v2) -- (v3) (v2) -- (v7);
			\draw (v3) -- (v7);
			
			\draw (v0) -- (v1) (v0) -- (v5) (v0) -- (v7);
			\draw (v1) -- (v5) (v1) -- (v7);
			\draw (v5) -- (v7);
			
			\draw (v0) -- (v2) (v0) -- (v6) (v0) -- (v7);
			\draw (v2) -- (v6) (v2) -- (v7);
			\draw (v6) -- (v7);
			
			\draw (v0) -- (v4) (v0) -- (v5) (v0) -- (v7);
			\draw (v4) -- (v5) (v4) -- (v7);
			\draw (v5) -- (v7);
			
			\draw (v0) -- (v4) (v0) -- (v6) (v0) -- (v7);
			\draw (v4) -- (v6) (v4) -- (v7);
			\draw (v6) -- (v7);
		\end{tikzpicture}
		\caption{}
		\label{fig:freudenthal}
	\end{subfigure}
	\hfill
	\begin{subfigure}{0.45\linewidth}
		\centering
		\includegraphics[width=0.9\linewidth]{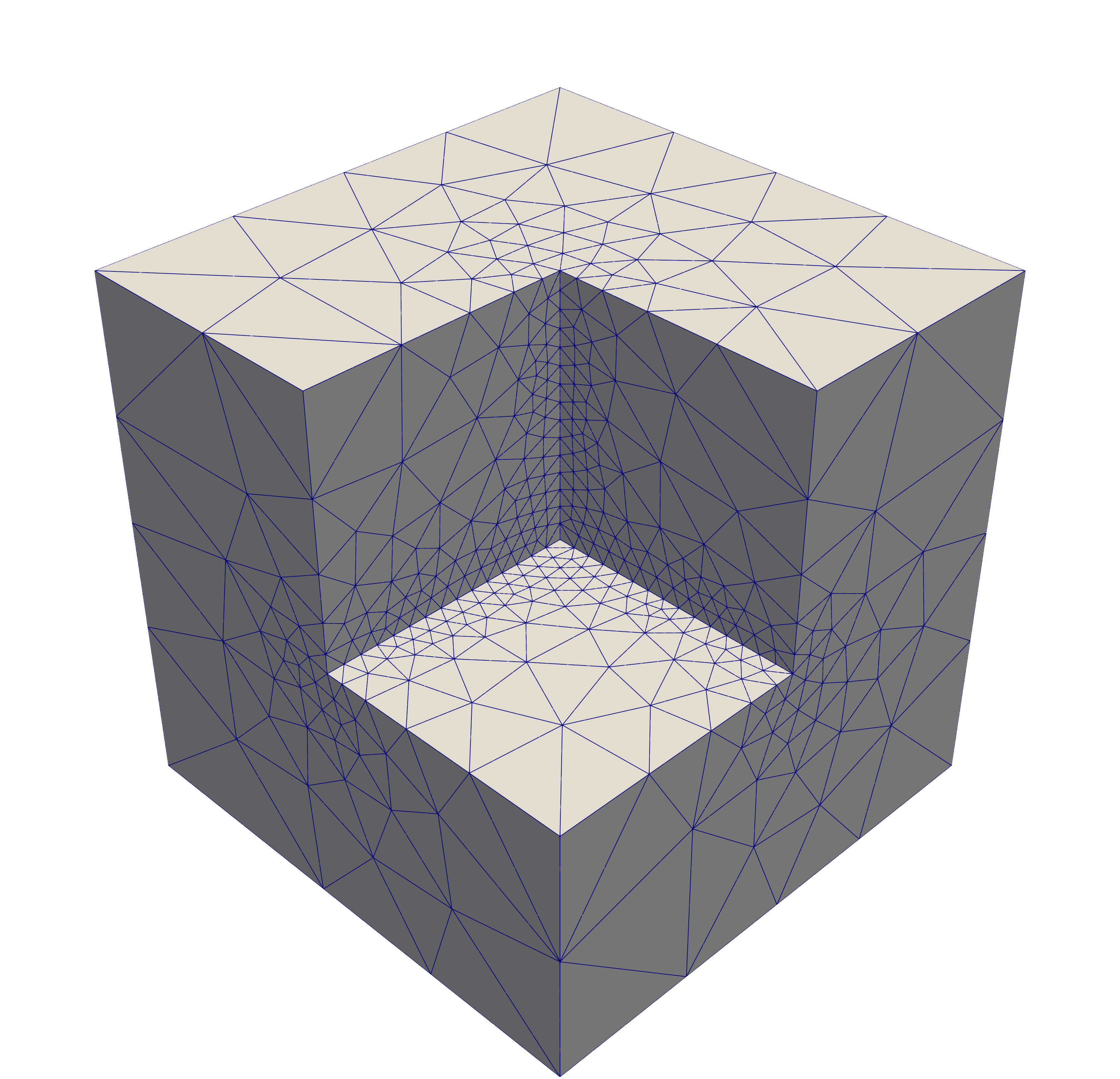}
		\caption{}
		\label{fig:fichera}	
	\end{subfigure}
	\caption{(a) Freudenthal subdivision of a cube and (b) mesh of the 
	Fichera 
		corner.}
\end{figure}

\subsubsection{$\Hgrad$ solvers}

We consider the subspace decomposition $\pafw{0}{X^0}$ \cref{eq:pafw} and the 
corresponding interior-interface splitting 
$\pafw{0}{X^0_{\interface}}+\jacobi{X^0_{\interior}}$ 
\cref{eq:approx-sc-pafw}. 
Here, and in the following examples, we use a symmetric hybrid Schwarz method 
preconditioner. In particular, the solvers from each group of spaces 
separated by 
a ``$+$" in the decompositions
\begin{align} \label{eq:hgrad-subspace-decomps}
	X^k = W^k + \sum_{V\in \Delta_0(\mesh)} \left. X^k \right|_{\star V}  
	\quad \text{and} \quad X^k = W^k + \sum_{V\in \Delta_0(\mesh)} \left. 
	X^k_{\interface} \right|_{\star V} 
	+   
	\sum_{\ell=1}^{N^k_{\interior}} \spn\left\{ \phi_{\ell}^{k} 
	\right\}
\end{align}
are weighted using estimated extremal eigenvalues and combined 
multiplicatively from right to left to right for symmetry. We equip $W^0$ 
with a geometric multigrid V-cycle with vertex patch
relaxation (equivalently point-Jacobi) and a direct Cholesky solve on the
coarsest mesh, while the remaining subspaces are equipped with exact solvers.

To more precisely describe the multiplicative splitting and the computation 
of the weights, we rewrite the hybrid methods in 
\cref{eq:hgrad-subspace-decomps} as a symmetric multiplicative Schwarz method 
with inexact solvers:  
\begin{align}
	\label{eq:generic-subspace-splitting-mult}
	X^k = \sum_{m=1}^{M} X^k_m,
\end{align}
where each space $X^k_m$ is equipped with a weight $\rho_m > 0$ and the 
inexact 
solver $\rho_m \tilde{a}_m(\cdot,\cdot)$, and the multiplicative sweep is 
done $X^k_1, \ldots, X^k_m, \ldots X^k_1$. For example, the second 
decomposition in \cref{eq:hgrad-subspace-decomps} satisfies
\begin{align*}
	X^0_1 = X^0_{\interior}, \quad X^0_2 = X^0_{\interface}, \quad \text{and} 
	\quad X^0_3 = W^0,
\end{align*}
where $\tilde{a}_1(\cdot,\cdot)$ is a point-Jacobi solver, 
$\tilde{a}_2(\cdot,\cdot)$ 
is the additive Schwarz method with decomposition $\sum_{V\in 
\Delta_0(\mesh)} \left. 
X^0_{\interface} \right|_{\star V}$ and exact solvers (additive patch solver 
on 0-stars), and $\tilde{a}_3(\cdot,\cdot)$ is a geometric multigrid V-cycle 
as above. The first decomposition in \cref{eq:hgrad-subspace-decomps} can be 
described similarly. The weight $\rho_m$ is taken to be 
\begin{equation}
	\rho_m = \frac{\tilde{\lambda}_{\min}+3\tilde{\lambda}_{\max}}{4}
\end{equation}
where $\tilde{\lambda}_{\min}$ and $\tilde{\lambda}_{\max}$ are estimates of 
the extreme generalized eigenvalues of
\begin{align*}
	u \in X^0_m, \lambda \in \mathbb{R} : \qquad a(u, v) = \lambda 
	\tilde{a}_m(u, v) \qquad \forall v \in X^0_m
\end{align*}  
computed from 10 conjugate gradient iterations with a randomized right hand 
side 
\cite{Lanczos51}. With this language, the 
$\pafw{0}{X^0_{\interface}}+\jacobi{X^0_{\interior}}$ solver is summarized in 
\cref{fig:solver-diagram-PAFW}.

\begin{figure}[htbp]
	\footnotesize
	\centering
	\begin{tikzpicture}[%
		every node/.style={draw=black, thick, anchor=west},
		grow via three points={one child at (-0.5,-0.7) and
			two children at (-0.7,-0.7) and (-0.7,-1.4)},
		edge from parent path={($(\tikzparentnode.south west)+(0.5, 0.0)$) |- 
			(\tikzchildnode.west)}]
		\node {KSP: conjugate gradients}
		child {node {PC: symmetric multiplicative Schwarz}
			child {node {$X^k_\interior$: point-Jacobi on cell interiors}}
			child {node {$X^k_\interface$: interface additive patches on 
			$l$-stars}}
			child {node {$W^k$: geometric multigrid V-cycle}
				child {node {Relaxation: patches on $l$-stars}}
				child {node {$h$-coarse: Cholesky}}
			}
		};
	\end{tikzpicture}
	\caption{Solver diagram for 
		$\pafw{l}{X^k_{\interface}}+\jacobi{X^k_{\interior}}$ used for 
		$\Hgrad$ and $\Hcurl$ 
		($l=0$) and $\Hdiv$ ($l=1$). 
	}
	\label{fig:solver-diagram-PAFW}
\end{figure}

The iteration counts for the two decompositions in 
\cref{eq:hgrad-subspace-decomps} with $p \in 5:7$ are displayed in 
\cref{tab:hgrad-unit-cube}. Here, and in the tables that follow, each column 
corresponds to a different value of $\alpha$, and the number of iterations 
for 
the solver with interior-interface splitting are displayed on the left 
followed 
by the iteration counts for the corresponding solver without the 
interior-interface splitting in brackets. We observe that across all values 
of 
$\alpha$ and $p$, the computationally cheaper 
$\pafw{0}{X^0_{\interface}}+\jacobi{X^0_{\interior}}$ solver performs within 
3 
iterations of the full $\pafw{0}{X^0_{\interface}}$ solver, and all of these 
iteration counts are robust with respect to the $\alpha$, $p$, and $h$.
\begin{table}[htbp]
	\centering
	\caption{PCG iteration counts for the $\Hgrad$ Riesz map, where $Y^0 = 
		X^0_{\interface} \ (Y^0 = X^0)$. \label{tab:hgrad-unit-cube}}
	\csvreader[
	head to column names, head to column names prefix=MY,
	tabular=rcccc,
	table head=\toprule 
	& & \multicolumn{3}{c}{$\pafw{0}{Y^0}$} \\
	{$l$} & {$p$} & {$\alpha=10^3$} & {$\alpha=1$} & {$\alpha=10^{-3}$}\\
	\midrule,
	late after line=\ifthenelse{\equal{\MYdegree}{5}}{\\\midrule}{\\},
	filter ifthen=\equal{\MYdegree}{5} \or\equal{\MYdegree}{6} 
	\or\equal{\MYdegree}{7},
	table foot=\bottomrule
	]{results/CG.3d.demkowicz.afw.combined.csv}{}
	{\MYlevel{} & \MYdegree{} & \MYFAiterations{} (\MYAiterations{})  & 
	\MYFBiterations{} (\MYBiterations{}) & \MYFCiterations{} 
	(\MYCiterations{})}
\end{table}

\subsubsection{$\Hcurl$ solvers}

We first consider the subspace decompositions based on vertex patches 
$\pafw{0}{X^1}$ and $\pafw{0}{X^1_{\interface}}+\jacobi{X^1_{\interior}}$ 
using the same symmetric hybrid Schwarz solver described above and in 
\cref{fig:solver-diagram-PAFW}. Note that the vertex patch relaxation of the 
geometric multigrid V-cycle preconditioner on $W^1$ is no longer equivalent 
to a point-Jacobi relaxation. The iteration counts for these two solvers with 
$p \in \{3,5,7\}$ are displayed in the leftmost section of 
\cref{tab:hcurl-unit-cube}. For $\alpha \geq 1$,  $\pafw{0}{X^1}$ and 
$\pafw{0}{X^1_{\interface}}+\jacobi{X^1_{\interior}}$ are within 7 iterations 
of one another, while for $\alpha = 10^{-3}$, 
$\pafw{0}{X^1_{\interface}}+\jacobi{X^1_{\interior}}$ requires about double 
the number of iterations as $\pafw{0}{X^1}$. The robustness of the 
interior-interface splitting for $\alpha \geq 1$ and its degradation for 
$\alpha = 10^{-3}$ is consistent with the stability of the decomposition in 
\cref{thm:generic-subspace-decomp-interface-interior-split}. 

We now consider two Hiptmair--Toselli variants $\ph{X^0}{X^1}$ and 
$\ph{X^0}{X^{1, \rmone}}$ along with their interior-interface splitting 
counterparts 
$\ph{X^0_{\interface}}{X^1_{\interface}}+\jacobi{X^1_{\interior}}$ and 
$\ph{X^0_{\interface}}{X^{1,\rmone}_{\interface}}+\jacobi{X^1_{\interior}}$. 
We use an analogous hybrid Schwarz method preconditioner as for 
$\pafw{0}{X^1}$. For example, the 
$\ph{X^0_{\interface}}{X^{1,\rmone}_{\interface}}+\jacobi{X^1_{\interior}}$ 
solver can be expressed as a symmetric multiplicative Schwarz method as in 
\cref{eq:generic-subspace-splitting-mult} with weighted inexact solvers by 
choosing
\begin{align*}
	X_1^1 = X^1_{\interior}, \quad X^1_2 = X^{1}_{\interface}, \quad 
	\text{and} \quad X^1_3 = W^1.
\end{align*}
As above, $\tilde{a}_1(\cdot,\cdot)$ is point-Jacobi, while 
$\tilde{a}_2(\cdot,\cdot)$ is the additive Schwarz method associated to the 
(Hiptmair--Toselli) decomposition 
\begin{align} \label{eq:hiptmair-smoothing}
	\sum_{V\in \Delta_{0}(\mesh)} \left. \grad X^{0}_{\interface} 
	\right|_{\star V}
	+ \sum_{E\in \Delta_1(\mesh)} \left. X^{1, \rmone}_{\interface} 
	\right|_{\star 
		E} 
\end{align}
with exact solvers. The inexact solver $\tilde{a}_3(\cdot,\cdot)$ is a 
geometric multigrid V-cycle preconditioner with Hiptmair--Jacobi smoothing 
meaning that the smoother is the additive Schwarz method associated to the 
decomposition \cref{eq:hiptmair-smoothing} at lowest order ($X^0_{\interface} 
= W^0$ and $X^{1, \rmone}_{\interface} =  W^1$) with exact solvers. The 
solver is summarized
in \cref{fig:solver-diagram-PH}. The other three hybrid Hiptmair--Toselli 
solvers are defined analogously.

\begin{figure}[htbp]
	\footnotesize
	\centering
	\begin{tikzpicture}[%
		every node/.style={draw=black, thick, anchor=west},
		grow via three points={one child at (-0.5,-0.7) and
			two children at (-0.7,-0.7) and (-0.7,-1.4)},
		edge from parent path={($(\tikzparentnode.south west)+(0.5, 0.0)$) |- 
			(\tikzchildnode.west)}]
		\node {KSP: conjugate gradients}
		child {node {PC: symmetric multiplicative Schwarz}
			child {node {$X^k_\interior$: point-Jacobi on cell interiors}}
			child {node {$X^k_\interface$: additive Hiptmair--Toselli 
					decomposition}
				child {node {$\d^{k-1} Y^{k-1}_\interface$: interface 
				additive 
						patches on $(k-1)$-stars}}
				child {node {$Z^k_\interface$: interface additive patches on 
						$k$-stars}}
			}
			child[missing]{}
			child[missing]{}
			child {node {$W^k$: geometric multigrid V-cycle}
				child {node {Relaxation: Hiptmair--Jacobi}}
				child {node {$h$-coarse: Cholesky}}
			}
		};
	\end{tikzpicture}
	\caption{Solver diagram for 
	$\ph{Y_{\interface}^{k-1}}{Z_{\interface}^{k,\rmone}}+\jacobi{X^k_{\interior}}$
	 used for $\Hcurl$ and $\Hdiv$.}
	\label{fig:solver-diagram-PH}
\end{figure}

The iteration counts for the four Hiptmair--Toselli decompositions are 
displayed 
in the two rightmost sections of \cref{tab:hcurl-unit-cube}. For each fixed 
value 
of $\alpha$, the $\ph{X^0}{X^1}$ and 
$\ph{X^0_{\interface}}{X^1_{\interface}}+\jacobi{X^1_{\interior}}$ solvers 
give 
nearly identical iteration counts which are robust in $h$ and $p$. Moreover, 
the 
iteration counts are robust across values of $\alpha$. 
For $\alpha \geq 1$, the $\ph{X^0}{X^{1, \rmone}}$ and 
$\ph{X^0_{\interface}}{X^{1, \rmone}_{\interface}}+\jacobi{X^1_{\interior}}$ 
solvers also give nearly identical iteration counts that are a few iterations 
less than the $\ph{X^0}{X^1}$ and 
$\ph{X^0_{\interface}}{X^1_{\interface}}+\jacobi{X^1_{\interior}}$ solvers. 
For $\alpha = 10^{-3}$, $\ph{X^0_{\interface}}{X^{1, 
\rmone}_{\interface}}+\jacobi{X^1_{\interior}}$ slightly outperforms 
$\ph{X^0}{X^{1, \rmone}}$, but both require substantially more iterations 
than for $\alpha \geq 1$ and the number of iterations decreases as the mesh 
is refined. Overall, for $\alpha \geq 1$, the finest splitting 
$\ph{X^0_{\interface}}{X^{1, \rmone}_{\interface}}+\jacobi{X^1_{\interior}}$ 
is within 6 iterations of $\pafw{0}{X^1}$, which is the most computationally 
expensive solver and requires the smallest iteration counts, while the 
slightly coarser splitting 
$\ph{X^0_{\interface}}{X^{1}_{\interface}}+\jacobi{X^1_{\interior}}$ requires 
about 3 times the number of iterations as $\pafw{0}{X^1}$ for $\alpha = 
10^{-3}$.

\begin{table}[htbp]
	\centering
	\caption{PCG iteration counts for the $\Hcurl$ Riesz map, where $Y^k = 
		X^k_{\interface} \ (Y^k = X^k)$.\label{tab:hcurl-unit-cube}}
	\csvreader[
	head to column names, head to column names prefix=MY,
	tabular=rlccc|,
	table head=\toprule 
	& & \multicolumn{3}{c|}{$\pafw{0}{Y^1}$} \\
	{$l$} & {$p$} & {$\alpha=10^3$} & {$\alpha=1$} & {$\alpha=10^{-3}$}\\
	\midrule,
	late after line=\ifthenelse{\equal{\MYdegree}{3}}{\\\midrule}{\\},
	filter ifthen=\equal{\MYdegree}{3} \or\equal{\MYdegree}{5} 
	\or\equal{\MYdegree}{7},
	table foot=\bottomrule
	]{results/N1curl.3d.demkowicz.afw.combined.csv}{}
	{\MYlevel{} & \MYdegree{} & \MYFAiterations{} (\MYAiterations{})  & 
	\MYFBiterations{} (\MYBiterations{}) & \MYFCiterations{} 
	(\MYCiterations{})}
	\hspace{-0.7em}
	\csvreader[
	head to column names, head to column names prefix=MY,
	tabular=|ccc,
	table head=\toprule
	\multicolumn{3}{c}{$\ph{Y^0}{Y^1}$}\\ 
	{$\alpha=10^3$} & {$\alpha=1$} & {$\alpha=10^{-3}$}\\
	\midrule,
	late after line=\ifthenelse{\equal{\MYdegree}{3}}{\\\midrule}{\\},
	filter ifthen=\equal{\MYdegree}{3} \or\equal{\MYdegree}{5} 
	\or\equal{\MYdegree}{7},
	table foot=\bottomrule
	]{results/N1curl.3d.demkowicz.hiptmair.combined.csv}{}
	{\MYFAiterations{} (\MYAiterations{})  & \MYFBiterations{} 
	(\MYBiterations{}) & \MYFCiterations{} (\MYCiterations{})}
	%
	%
	
	\vspace{0.3em}
	\csvreader[
	head to column names, head to column names prefix=MY,
	tabular=rlccc,
	table head=
	& & \multicolumn{3}{c}{$\ph{Y^0}{Y^{1,\rmone}}$}\\ 
	{$l$} & {$p$} & {$\alpha=10^3$} & {$\alpha=1$} & {$\alpha=10^{-3}$}\\
	\midrule,
	late after line=\ifthenelse{\equal{\MYdegree}{3}}{\\\midrule}{\\},
	filter ifthen=\equal{\MYdegree}{3} \or\equal{\MYdegree}{5} 
	\or\equal{\MYdegree}{7},
	table foot=\bottomrule
	]{results/N1curl.3d.demkowicz.hiptmair-red.combined.csv}{}
	{\MYlevel{} & \MYdegree{} & \MYFAiterations{} (\MYAiterations{})  & 
	\MYFBiterations{} 
	(\MYBiterations{}) & \MYFCiterations{} (\MYCiterations{})}
	\hphantom{	\hspace{-1em}
		\csvreader[
		head to column names, head to column names prefix=MY,
		tabular=|ccc|,
		table head=\toprule
		\multicolumn{3}{c|}{$\ph{Y^0}{Y^1}$}\\ 
		{$\alpha=10^3$} & {$\alpha=1$} & {$\alpha=10^{-3}$}\\
		\midrule,
		late after line=\ifthenelse{\equal{\MYdegree}{3}}{\\\midrule}{\\},
		filter ifthen=\equal{\MYdegree}{3} \or\equal{\MYdegree}{5} 
		\or\equal{\MYdegree}{7},
		table foot=\bottomrule
		]{results/N1curl.3d.demkowicz.hiptmair.combined.csv}{}
		{\MYFAiterations{} (\MYAiterations{})  & \MYFBiterations{} 
			(\MYBiterations{}) & \MYFCiterations{} (\MYCiterations{})}}
\end{table}

\subsubsection{$\Hdiv$ solvers}

We again first consider the vertex patch subspace decompositions 
$\pafw{0}{X^3}$ and 
$\pafw{0}{X_{\interface}^3}+\jacobi{X_{\interior}^3}$ with the same symmetric 
hybrid Schwarz solver described in \cref{fig:solver-diagram-PAFW}. The 
iteration 
counts for these two solvers with $p \in \{3,5,7\}$ are displayed in the 
upper 
left quadrant of \cref{tab:hdiv-unit-cube}. After one or two levels of 
refinement, $\pafw{0}{X_{\interface}^3}+\jacobi{X_{\interior}^3}$ requires 
about 
double the number of iterations as $\pafw{0}{X^3}$, but all iteration counts 
appear to be robust in $\alpha$, $h$, and $p$. For the edge patch subspace 
decompositions $\pafw{1}{X^3}$ and 
$\pafw{1}{X_{\interface}^3}+\jacobi{X_{\interior}^3}$, described in 
\cref{fig:solver-diagram-PAFW} and displayed in the upper right of 
\cref{tab:hdiv-unit-cube}, the iteration counts for $\alpha \geq 1$ differ by 
at 
most 2 and are insensitive to $\alpha$ and $p$. For $\alpha = 10^{-3}$, the 
iteration counts decrease as the mesh is refined and on the finest mesh, the 
iteration counts are also within 2 and appear to be insensitive to $p$.

The iteration counts for the Hiptmair--Toselli decompositions 
$\ph{X^{2}}{X^{3}}$, \\
$\ph{X_{\interface}^{2}}{X_{\interface}^{3}}+\jacobi{X_{\interior}^3}$,
$\ph{X^{2,\rmone}}{X^{3,\rmone}}$, and 
$\ph{X_{\interface}^{2,\rmone}}{X_{\interface}^{3,\rmone}}+\jacobi{X_{\interior}^3}$,
described in \cref{fig:solver-diagram-PH}, are displayed in the bottom half 
of 
\cref{tab:hdiv-unit-cube}. Note that the results are similar to the edge 
patch 
decompositions, with the only exception being that the type-I/type-II 
splitting 
$\ph{X^{2,\rmone}}{X^{3,\rmone}}$ performs worse than $\ph{X^{2}}{X^{3}}$ on 
coarser meshes. Overall, $\pafw{0}{X^3}$ and 
$\pafw{0}{X_{\interface}^3}+\jacobi{X_{\interior}^3}$ perform the best, but 
all solvers with the interior-interface split perform nearly identically for 
$\alpha \geq 1$ and on fine enough meshes for $\alpha = 10^{-3}$.

\begin{table}[htbp]
	\centering
	\caption{PCG iteration counts for the $\Hdiv$ Riesz map, where $Y^k = 
		X^k_{\interface} \ (Y^k = X^k)$.\label{tab:hdiv-unit-cube}}
	\csvreader[
	head to column names, head to column names prefix=MY,
	tabular=rlccc|,
	table head=\toprule 
	&  & \multicolumn{3}{c|}{$\pafw{0}{Y^2}$} \\
	{$l$} & {$p$} & {$\alpha=10^3$} & {$\alpha=1$} & {$\alpha=10^{-3}$}\\
	\midrule,
	late after line=\ifthenelse{\equal{\MYdegree}{3}}{\\\midrule}{\\},
	filter ifthen=\equal{\MYdegree}{3} \or\equal{\MYdegree}{5} 
	\or\equal{\MYdegree}{7}
	]{results/N1div.3d.demkowicz.afw0.combined.csv}{}
	{\MYlevel{} & \MYdegree{} & \MYFAiterations{} (\MYAiterations{})  & 
	\MYFBiterations{} (\MYBiterations{}) & \MYFCiterations{} 
	(\MYCiterations{})}
	\hspace{-0.7em}
	\csvreader[
	head to column names, head to column names prefix=MY,
	tabular=|ccc,
	table head=\toprule 
	\multicolumn{3}{c}{$\pafw{1}{Y^2}$} \\
	{$\alpha=10^3$} & {$\alpha=1$} & {$\alpha=10^{-3}$}\\
	\midrule,
	late after line=\ifthenelse{\equal{\MYdegree}{3}}{\\\midrule}{\\},
	filter ifthen=\equal{\MYdegree}{3} \or\equal{\MYdegree}{5} 
	\or\equal{\MYdegree}{7}
	]{results/N1div.3d.demkowicz.afw.combined.csv}{}
	{\MYFAiterations{} (\MYAiterations{})  & \MYFBiterations{} 
	(\MYBiterations{}) & \MYFCiterations{} (\MYCiterations{})}
	
	\csvreader[
	head to column names, head to column names prefix=MY,
	tabular=rlccc|,
	table head= \toprule
	&  & \multicolumn{3}{c|}{$\ph{Y^1}{Y^2}$} \\
	{$l$} & {$p$} & {$\alpha=10^3$} & {$\alpha=1$} & {$\alpha=10^{-3}$}\\
	\midrule,
	late after line=\ifthenelse{\equal{\MYdegree}{3}}{\\\midrule}{\\},
	filter ifthen=\equal{\MYdegree}{3} \or\equal{\MYdegree}{5} 
	\or\equal{\MYdegree}{7},
	table foot=\bottomrule
	]{results/N1div.3d.demkowicz.hiptmair.combined.csv}{}
	{\MYlevel{} & \MYdegree{} & \MYFAiterations{} (\MYAiterations{})  & 
	\MYFBiterations{} (\MYBiterations{}) & \MYFCiterations{} 
	(\MYCiterations{})}
	\hspace{-0.7em}
	\csvreader[
	head to column names, head to column names prefix=MY,
	tabular=|ccc,
	table head= \toprule
	\multicolumn{3}{c}{$\ph{Y^{1,\rmone}}{Y^{2,\rmone}}$} \\
	{$\alpha=10^3$} & {$\alpha=1$} & {$\alpha=10^{-3}$}\\
	\midrule,
	late after line=\ifthenelse{\equal{\MYdegree}{3}}{\\\midrule}{\\},
	filter ifthen=\equal{\MYdegree}{3} \or\equal{\MYdegree}{5} 
	\or\equal{\MYdegree}{7},
	table foot=\bottomrule
	]{results/N1div.3d.demkowicz.hiptmair-red.combined.csv}{}
	{\MYFAiterations{} (\MYAiterations{})  & \MYFBiterations{} 
	(\MYBiterations{}) & \MYFCiterations{} (\MYCiterations{})}
\end{table}

\subsubsection{Summary remarks}
\label{sec:rm-summary-remarks}

For all of the Riesz maps considered, each space decomposition with and 
without 
the interior-interface splitting have the same behavior as $h\to 0$, 
$p\to\infty$, and 
$h^2\beta/\alpha\to 0$, which is consistent with 
\cref{thm:generic-subspace-decomp-interface-interior-split}. The degradation 
of the splitting as $h^2\beta/\alpha \to \infty$ present in 
\Cref{thm:generic-subspace-decomp-interface-interior-split} is confirmed
by a jump in iteration counts for $l=0$ and $\alpha = 10^{-3}$ between
$\pafw{0}{X^1}$ and
$\pafw{0}{X^1_{\interface}}+\jacobi{X^1_{\interior}}$,
between $\pafw{0}{X^2}$ and
$\pafw{0}{X^2_{\interface}}+\jacobi{X^2_{\interior}}$, 
between $\pafw{1}{X^2}$ and
$\pafw{1}{X^2_{\interface}}+\jacobi{X^2_{\interior}}$, and
between 
$\ph{X^1}{X^2}$ and
$\ph{X^1_{\interface}}{X^2_{\interface}}+\jacobi{X^2_{\interior}}$.
That the remaining solvers appear to be unaffected by $h^2\beta/\alpha$ 
warrants further theoretical investigation. Also note that the effect of 
changing shape regularity is observed as there is a slight increase in the 
iteration counts 
after one level of mesh refinement. Moreover, 
\cref{lem:ph-typeI-typeII-stable} seems to partially capture the lack of 
$p$-robustness of the type-I/type-II splitting in as observed in 
$\ph{X^0}{X^{1,\rmone}}$ and 
$\ph{X_{\interface}^0}{X_{\interface}^{1,\rmone}}+\jacobi{X_{\interior}^1}$ 
when $\alpha=10^{-3}$. The apparent 
$p$-robustness of all the Type-I/Type-II splittings for $\alpha \geq 1$ and 
for $\ph{X^{1,\rmone}}{X^{2,\rmone}}$ for $\alpha = 10^{-3}$ is not explained 
by our current theory.

We again emphasize that the cost of one iteration for the standard solvers, 
which include all interior degrees of freedom in the patch solves, is 
significantly more expensive than the solvers with the interior-interface 
splittings. In the case $h^2 \beta /\alpha \leq 1$, all of the iteration 
counts observed are nearly identical between solvers with and without the 
interior-interface splitting. Moreover, all solvers with the 
interior-interface splitting considered for a particular Riesz map gave 
nearly identical iteration counts. Thus, in the case that $h^2 \beta /\alpha 
\leq 1$, we recommend solvers for each of the Riesz maps as follows. For 
$\Hgrad$, there is only one choice, 
$\pafw{0}{X^0_{\interface}}+\jacobi{X^0_{\interior}}$. For $\Hcurl$, we take 
the finest splitting 
$\ph{X_{\interface}^0}{X_{\interface}^{1,\rmone}}+\jacobi{X^1_{\interior}}$. 
For $\Hdiv$, we take $\pafw{1}{X^2_{\interface}}+\jacobi{X^2_{\interior}}$ 
because this decomposition involves patch problems that are only marginally 
more expensive than the patch problems in the finest splitting $\ph{X^{1, 
\rmone}}{X^{2,\rmone}}+\jacobi{X^2_{\interior}}$, as discussed in 
\cref{sec:hdiv-decompositions}, but avoids the need for an auxiliary space. 
We will use these decompositions in the next section to precondition the 
Hodge--Laplace problem.

\subsection{Hodge--Laplacians} \label{sec:results-hodge-laplace}

The Riesz maps are very useful in the construction of block preconditioner 
for coupled
systems of partial differential equations \cite{mardal11}. Here, we employ 
block diagonal preconditioners based on our developed space decomposition
methods for the Riesz maps to solve  Hodge--Laplacians \cite{Arnold2006} on 
the Fichera corner $\Omega = (0,1)^3 \setminus (0.5, 1)^3$ with $\Gamma_N = 
\partial \Omega$ as a prototypical example.
For $k \in 1:3$, the problem is to find
$(\sigma, u) \in X^{k-1} \times X^k$ such that
\begin{subequations}
	\label{eq:hodge-laplace}
	\begin{alignat}{2}
		-(\sigma, \tau) + (u, \d^{k-1} \tau) &= 0 \qquad & &\forall  \tau \in 
		X^{k-1}
		\\
		(\d^{k-1} \sigma, v)  + (\d^k u, \d^{k} v) &= F(v) \qquad & &\forall 
		v \in X^k,
	\end{alignat}	
\end{subequations}
where we recall that $d^3 := 0$. The cases $k \in 1:2$ correspond to a mixed 
formulation of the vector Poisson problem, while the case $k=3$ 
yields the mixed formulation of the scalar Poisson problem in $\Hdiv \times 
\Ltwo$. We consider a single unstructured mesh, pictured in 
\cref{fig:fichera}. The mesh is refined towards the re-entrant vertex and the 
edges abutting the vertex, and the diameter of the refined tetrahedra are 
about $1/8$ the diameter of the largest tetrahedra. We prescribe a constant 
source term $F(v) = (f, v)$ for a constant
$f = 1$ or $f=(1, 1, 1)^\top$.

Since problem \cref{eq:hodge-laplace} is well-posed in $H(\d^{k-1}) \times 
H(\d^k)$ \cite{Arnold2006}, we consider the following block diagonal 
(weighted) Riesz map preconditioner:
\begin{equation} \label{eq:augmented-lagrangian}
	(\sigma, \tau) + (\d^{k-1}\sigma, \gamma\d^{k-1}\tau) + 
	(u, \gamma^{-1}v) + (\d^{k}u, \d^{k}v).
\end{equation}
The case $\gamma = 1$ corresponds to standard operator preconditioning 
\cite{mardal11}, while $k=3$ and $\gamma \gg 1$ corresponds to augmented 
Lagrangian preconditioning \cite{FortinGlow83,Hiptmair96}. For $k \in 1:2$ 
and $\gamma \neq 1$, the weighted Riesz map preconditioner 
\cref{eq:augmented-lagrangian} appears to be novel. We show in 
\cref{thm:augmented-lagrangian-hl} that if one uses 
\cref{eq:augmented-lagrangian} to precondition \cref{eq:hodge-laplace}, then 
all of the eigenvalues of the preconditioned system converge to $\pm 1$ as 
$\gamma \to \infty$.

Of course, the preconditioner \cref{eq:augmented-lagrangian} is too expensive 
to use directly, so we replace each of the weighted Reisz maps with the 
hybrid Schwarz method preconditioners we used above. Since we only encounter 
the case $\beta/\alpha \leq 1$, we restrict the discussion to the subspace 
decompositions we recommend in \cref{sec:rm-summary-remarks} and the 
corresponding decompositions that do not split the interior-interface degrees 
of freedom:
\begin{itemize}
	\item $\CG_p$: $\pafw{0}{X^0_{\interface}}+\jacobi{X^0_{\interior}}$ (and 
	$\pafw{0}{X^0}$),
	
	\item $\Ned_p$: 
	$\ph{X_{\interface}^0}{X_{\interface}^{1,\rmone}}+\jacobi{X^1_{\interior}}$
	 (and $\ph{X^0}{X^{1,\rmone}}$),
	
	\item $\RT_p$:  $\pafw{1}{X^2_{\interface}}+\jacobi{X^2_{\interior}}$ 
	(and $\pafw{1}{X^2}$),
	
	\item $\DG_{p-1}$: $\jacobi{X^3}$ (and $\jacobi{X^3}$), an exact solver 
	as the basis is orthogonal.
\end{itemize}

\begin{table}[htbp]
	\centering
	\caption{Preconditioned MINRES iteration counts to solve the Hodge 
	Laplacians.\label{tab:hodge-fichera}}
	\csvreader[
	head to column names, head to column names prefix=MY,
	tabular=r@{\hskip 0.75em}c@{\hskip 0.75em}c@{\hskip 0.75em}c@{\hskip 0.75em}c|,
	table head=\toprule 
	& \multicolumn{2}{c|}{$\CG_p\times \Ned_p$} \\
	{$p$} & {$\gamma=1$} & {$\gamma=10^3$}\\
	\midrule,
	table foot=\bottomrule
	]{results/hodge.N1curl.3d.demkowicz.combined.csv}{}
	{\MYdegree{} & \MYFAiterations{} (\MYAiterations{})  & \MYFBiterations{} (\MYBiterations{})}
	\hspace{-0.7em}
	\csvreader[
	head to column names, head to column names prefix=MY,
	tabular=|c@{\hskip 0.75em}c@{\hskip 0.75em}c|,
	table head=\toprule
	\multicolumn{2}{c|}{$\Ned_p \times \RT_p$}\\ 
	{$\gamma=1$} & {$\gamma=10^3$}\\
	\midrule,
	table foot=\bottomrule
	]{results/hodge.N1div.3d.demkowicz.combined.csv}{}
	{\MYFAiterations{} (\MYAiterations{})  & \MYFBiterations{} (\MYBiterations{})}
	\hspace{-0.7em}
	\csvreader[
	head to column names, head to column names prefix=MY,
	tabular=|c@{\hskip 0.75em}c@{\hskip 0.75em}c,
	table head=\toprule
   \multicolumn{2}{c}{$\RT_p\times \DG_{p-1}^{\phantom{1}}$}\\ 
	{$\gamma=1$} & {$\gamma=10^3$}\\
	\midrule,
	filter ifthen=\not\equal{\MYdegree}{7},
	table foot=\bottomrule
	]{results/hodge.DG.3d.demkowicz.combined.csv}{}
	{\MYFAiterations{} (\MYAiterations{})  & \MYFBiterations{} (\MYBiterations{})}
\end{table}

The preconditioned MINRES iteration counts for $p \in 1:6$ and $\gamma \in 
\{1,10^3\}$ with a relative tolerance of $10^{-8}$ are displayed in 
\Cref{tab:hodge-fichera}. First note that for $p=1$, the solver corresponds 
to 
using the exact weighted Riesz map preconditioner 
\cref{eq:augmented-lagrangian} 
as the multigrid V-cycle preconditioner reduces to an exact solver. Here, we 
observe that taking $\gamma = 10^{3}$ reduces the iteration counts from 5 or 
6 to 
3. For $p \geq 2$, we generally see that the solvers are robust in $p$ and 
that 
taking $\gamma$ large reduces the iteration counts at no extra expense in the 
solver. The effect is minimal for the $\Hcurl\times\Hdiv$ Hodge--Laplacian 
for 
high-order Schwarz solvers owing to difficulty of solving the individual 
Riesz 
maps. Suppose instead that we apply \cref{eq:augmented-lagrangian} as the 
preconditioner by performing an inner PCG iteration with the Schwarz solvers in 
\cref{sec:preconditioning} as the preconditioners. Then, we would expect 
between 3 and 6 outer iterations and between 20 and 25 inner iterations 
per outer iteration from 
\cref{tab:hgrad-unit-cube,tab:hcurl-unit-cube,tab:hdiv-unit-cube}. 
For the $\Hcurl\times\Hdiv$ Hodge--Laplacian, 
the total number of Schwarz solver applications is in line with our 
observed iteration counts in \cref{tab:hodge-fichera}. 
We also note that the interior-interface splitting again gives 
comparable 
iteration counts within 4 for the $\Hcurl \times \Hdiv$ and $\Hdiv \times 
\Ltwo$ 
Hodge--Laplacians and within 16 for the $\Hgrad \times \Hcurl$ 
Hodge--Laplacian.

\appendix

\section{Energy stable splittings}

By construction, the space $X^k$ admits direct sum decompositions based on 
splitting the interior and interface basis functions and based on splitting 
the 
type-$\rmone$ and type-$\rmtwo$ basis functions:
\begin{align} \label{eq:xk-interior-interface-typeI-typeII-split}
	X^k = X^{k}_{\interface} \oplus 
	X^k_{\interior} \quad \text{and} \quad 
	X^{k} = X^{k, \rmone} \oplus X^{k, \rmtwo}.
\end{align}
We now examine the energy stability of these two decompositions. Given a 
domain 
$\omega$, let
\begin{align*}
	(u, v)_{H(\d^k; \omega)} := (u, v)_{\omega} + (\d^k u, \d^k v)_{\omega} 
	\quad 
	\text{and} \quad \|u\|_{H(\d^k; \omega)}^2 := (u, u)_{H(\d^k; \omega)}.
\end{align*}

\subsection{$H(\d^k)$-stability on the reference cell}

We begin by analyzing the stability of the decompositions on the reference 
cell:
\begin{align}
	X^k(\That) = X^{k}_{\interface}(\That) \oplus 
	X^k_{\interior}(\That) \quad \text{and} \quad 
	X^{k}(\That) = X^{k, \rmone}(\That) \oplus X^{k, \rmtwo}(\That),
\end{align}
where $X^{k}_{\interface}(\That)$, etc.~denotes the space analogous to 
\cref{eq:global-spaces-typeI,eq:global-spaces-typeII,%
	eq:global-space-interior-interface} on the reference cell. In particular, 
	given 
$u \in X^k(\That)$, there exist unique $u_{\interface} \in  
X^{k}_{\interface}(\That)$, $u_{\interior} \in  X^{k}_{\interior}(\That)$, 
$u_{\rmone} \in X^{k, \rmone}(\That)$, and $u_{\rmtwo} \in X^{k, 
\rmtwo}(\That)$ 
such that
\begin{align}
	u = u_{\interface} + u_{\interior} \quad \text{and} \quad u = u_{\rmone} 
	+ 
	u_{\rmtwo},
\end{align}
and the mappings $u \mapsto (u_{\interface}, u_{\interior})$ and $u \mapsto 
(u_{\rmone}, u_{\rmtwo})$ are linear. The first result shows that the 
interior-interface decomposition is uniformly 
stable with respect to the polynomial degree.
\begin{theorem}	
	\label{thm:spectral-equiv-reference-stiffness}
	There exists a constant $C$ depending only on the dimension $d$ such that 
	for 
	all $u \in X^k(\That)$, there holds
	\begin{align}
		\label{eq:stable-decomp-reference-mass}
		\frac{1}{2} \|u\|_{\That}^2 \leq \| u_{\interface} \|_{\That}^2 
		+ \| u_{\interior} \|_{\That}^2 \leq  C \|u\|_{H(\d^k; \That)}^2
	\end{align}
	and
	\begin{align}
		\label{eq:stable-decomp-reference-dk}
		\|\d^k u\|_{\That}^2 = \| \d^k u_{\interface} \|_{\That}^2 
		+ \| \d^k u_{\interior} \|_{\That}^2. 
	\end{align}
\end{theorem}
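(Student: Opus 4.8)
The plan is to handle the $\d^k$-orthogonality \cref{eq:stable-decomp-reference-dk} and the lower bound in \cref{eq:stable-decomp-reference-mass} first, since both follow at once from the reference-cell structure, and then to reduce the upper bound to a single $p$-robust estimate. For \cref{eq:stable-decomp-reference-dk}, since $\d^k u = \d^k u_{\interface} + \d^k u_{\interior}$, it suffices to note that the cross term $(\d^k u_{\interface}, \d^k u_{\interior})_{\That}$ vanishes, which is precisely the statement $\hat{K}_{\interior\interface} = 0$ from \cref{lem:ref-element-matrix-properties} (itself a consequence of the interior type-$\rmone$ degrees of freedom \cref{eq:type-I-dofs} with $S = \That$). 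The lower bound is the elementary inequality $\|u\|_{\That}^2 = \|u_{\interface} + u_{\interior}\|_{\That}^2 \le 2\|u_{\interface}\|_{\That}^2 + 2\|u_{\interior}\|_{\That}^2$.

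For the upper bound, $u_{\interface} = u - u_{\interior}$ gives $\|u_{\interface}\|_{\That} \le \|u\|_{\That} + \|u_{\interior}\|_{\That}$, so it is enough to prove $\|u_{\interior}\|_{\That}^2 \le C\|u\|_{H(\d^k;\That)}^2$. First I would expand $u_{\interior} = \sum_j c_j \phi_{\That,j}^{k,\rmone} + \sum_n d_n \phi_{\That,n}^{k,\rmtwo}$; since $\{\phi\}$ is the dual basis to \cref{eq:abstract-dofs-stiff}, the coefficients are the interior degrees of freedom of $u$, namely $c_j = (\d^k\psi_{\That,j}^{k}, \d^k u)_{\That}$ and $d_n = (\d^{k-1}\psi_{\That,n}^{k-1}, u)_{\That}$. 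By \cref{lem:ref-element-matrix-properties} the interior mass block is $\blockdiag(\Lambda, I)$, hence $\|u_{\interior}\|_{\That}^2 = \sum_j \lambda_{\That,j} c_j^2 + \sum_n d_n^2$. The families $\{\d^k\psi_{\That,j}^{k}\}_j$ and $\{\d^{k-1}\psi_{\That,n}^{k-1}\}_n$ are $L^2(\That)$-orthonormal by \cref{eq:ref-eval-problem-x}, so Bessel's inequality yields $\sum_j c_j^2 \le \|\d^k u\|_{\That}^2$ and $\sum_n d_n^2 \le \|u\|_{\That}^2$, and therefore $\|u_{\interior}\|_{\That}^2 \le \lambda_{\max}\,\|\d^k u\|_{\That}^2 + \|u\|_{\That}^2$, where $\lambda_{\max} := \max_j \lambda_{\That,j}$.

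It thus remains to bound $\lambda_{\max}$ by a constant depending only on $d$ (and $k < d$, which is fixed). By \cref{eq:ref-eval-problem-x-aug} and the subsequent renormalization, $\lambda_{\max}$ is the reciprocal of the smallest nonzero eigenvalue of $(\d^k\cdot,\d^k\cdot)_{\That}$ relative to $(\cdot,\cdot)_{\That}$ on $\mathring{X}^{k}(\That)$, i.e.
\[
  \lambda_{\max}^{-1} = \inf_{0 \neq \psi \in \mathring{X}^{k,\rmone}(\That)} \frac{\|\d^k\psi\|_{\That}^2}{\|\psi\|_{\That}^2},
\]
so what is needed is a discrete Poincaré--Friedrichs inequality $\|\psi\|_{\That} \le C\|\d^k\psi\|_{\That}$ on $\mathring{X}^{k,\rmone}(\That)$ with $C$ independent of $p$. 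For $k=0$ this is immediate from the classical Friedrichs inequality on $H^1_0(\That)$, since $\mathring{X}^{0,\rmone}(\That) = \P_p(\That)\cap H^1_0(\That)$. The hard part will be $k \ge 1$: here $\mathring{X}^{k,\rmone}(\That)$ is $L^2$-orthogonal only to the \emph{polynomial} kernel $\d^{k-1}\mathring{X}^{k-1}(\That) = \ker(\d^k;\mathring{X}^{k}(\That))$ (using \cref{lem:bubble-subcomplex-exact}), not to the full kernel of $\d^k$ on $H_0(\d^k;\That)$, so one cannot simply restrict the continuous Poincaré inequality for co-closed forms. I would close this gap with $p$-robust $L^2$-bounded commuting projections $\Pi^j$ onto the bubble spaces of the polynomial de Rham complex on $\That$ (a standard tool in $p$-robust analysis, available via polynomial extension operators): from the continuous $L^2$-orthogonal splitting $\psi = z + \psi^{\perp}$ with $z \in \ker(\d^k; H_0(\d^k;\That))$ and $\|\psi^{\perp}\|_{\That} \le c_P\|\d^k\psi\|_{\That}$, commutativity ($\d^k\Pi^k = \Pi^{k+1}\d^k$) gives $\d^k\Pi^k z = 0$, so $\Pi^k z \in \ker(\d^k;\mathring{X}^{k}(\That))$; then $\psi = \Pi^k\psi = \Pi^k z + \Pi^k\psi^{\perp}$ together with the orthogonality of $\psi$ to $\ker(\d^k;\mathring{X}^{k}(\That))$ gives $\|\psi\|_{\That}^2 = (\psi, \Pi^k\psi^{\perp})_{\That} \le \|\Pi^k\|\,c_P\,\|\psi\|_{\That}\|\d^k\psi\|_{\That}$, i.e. $C = \|\Pi^k\|c_P$, a constant depending only on $d$ and $k$. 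Combining with the preceding paragraph yields $\|u_{\interior}\|_{\That}^2 \le C\|u\|_{H(\d^k;\That)}^2$ and hence the upper bound in \cref{eq:stable-decomp-reference-mass}. The only nonroutine ingredient is this $p$-robustness of the discrete Poincaré constant (equivalently, that $\lambda_{\max}$ stays bounded as $p \to \infty$); everything else is bookkeeping with \cref{lem:ref-element-matrix-properties,lem:trace-basis-eigenfunctions} and Bessel's inequality.
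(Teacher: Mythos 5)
Your proof is correct and takes a genuinely different route from the paper's. The two arguments agree on \cref{eq:stable-decomp-reference-dk} and on the elementary lower bound, but diverge on the key upper bound $\|u_{\interior}\|_{\That} \lesssim \|u\|_{H(\d^k;\That)}$. The paper works with the saddle-point system that $u_{\interior}$ satisfies (the conditions \cref{proof:eq:interior-orthog-conditions-1,proof:eq:interior-orthog-conditions-2}), invokes standard mixed-method stability theory, and bounds the inf-sup constant $\beta^k$ from below using the $p$-robust right inverse of $\d^k$ on $\mathring{X}^k(\That)$ from \cite[Thm.~5.1]{Ern2024}. You instead exploit the explicit eigen-structure of the basis: Lemma~\ref{lem:ref-element-matrix-properties} gives $\|u_{\interior}\|_{\That}^2 = \sum_j \lambda_j c_j^2 + \sum_n d_n^2$ exactly, and then Bessel's inequality applied to the $L^2(\That)$-orthonormal families $\{\d^k\psi_{\That,j}^k\}$ and $\{\d^{k-1}\psi_{\That,n}^{k-1}\}$ reduces everything to a $p$-uniform bound on $\lambda_{\max}$. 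This is a cleaner reduction that is more tightly coupled to the paper's specific basis construction, and it correctly identifies $\lambda_{\max}$ as the squared discrete Poincar\'e constant on $\mathring{X}^{k,\rmone}(\That)$. Both routes end up resting on the same $p$-robust ingredient; the Poincar\'e inequality you need, the right inverse the paper uses, and the $L^2$-bounded commuting projection you invoke are all equivalent formulations. One small simplification worth noting: you do not need commuting projections at all. Given $\psi \in \mathring{X}^{k,\rmone}(\That)$, the right inverse of \cite[Thm.~5.1]{Ern2024} produces $v \in \mathring{X}^k(\That)$ with $\d^k v = \d^k\psi$ and $\|v\|_{\That} \le C\|\d^k\psi\|_{\That}$; then $\psi - v \in \ker(\d^k;\mathring{X}^k(\That))$ is $L^2$-orthogonal to $\psi$, so $\|\psi\|_{\That}^2 = (\psi, v)_{\That} \le C\|\psi\|_{\That}\|\d^k\psi\|_{\That}$, giving the discrete Poincar\'e inequality directly. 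This matches the paper's citation and avoids the extra step through the continuous Hodge splitting and the (correct but less explicitly referenced) existence of $p$-robust commuting projections onto bubble spaces.
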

\begin{proof}
	We prove the case $d=3$; the case $d=2$ follows from analogous arguments.
	
	\noindent \textbf{Step 1: $\d^k$ stability. } The degrees of freedom 
	\cref{eq:type-I-dofs,eq:type-II-dofs} show that
	$u_{\interior}$ satisfies
	\begin{subequations}
		\label{proof:eq:interior-orthog-conditions}
		\begin{alignat}{2}
			\label{proof:eq:interior-orthog-conditions-1}
			(u_{\interior}, \d^{k-1} y)_{\That} &= (u, \d^{k-1} y)_{\That} 
			\qquad 
			& &\Forall y 
			\in 
			\mathring{X}^{k-1}(\That), \\
			\label{proof:eq:interior-orthog-conditions-2}
			(\d^{k} u_{\interior}, \d^k v)_{\That} &= (\d^k u, \d^k 
			v)_{\That} 
			\qquad & &\Forall 
			v 
			\in 
			\mathring{X}^k(\That),
		\end{alignat}
	\end{subequations}
	and in particular $\d^k u_{\interior}$ is the $L^2(\That)$-projection of 
	$\d^k u$ 
	onto 
	$\d^k \mathring{X}^k(\That)$. \Cref{eq:stable-decomp-reference-dk} 
	immediately 
	follows.
	
	\noindent \textbf{Step 2: $L^2$-stability. } Suppose first that $k = 0$. 
	Then, 
	Poincar\'{e}'s inequality gives $\|u_{\interior}\|_{\That} \leq C 
	\|u\|_{H(\d^k; \That)}$. Now suppose that $k \in \{1,2\}$. Then, 
	conditions 
	\cref{proof:eq:interior-orthog-conditions} ensure that there exists $q 
	\in 
	\d^k 
	\mathring{X}^k(\That)$ such that
	\begin{subequations}
		\begin{alignat}{2}
			(u_{\interior}, v)_{\That} + (\d^k v, q)_{\That} &= (u, 
			v)_{\That} 
			\qquad & &\Forall v \in 
			\mathring{X}^k(\That), \\
			(\d^k u_{\interior}, r)_{\That} &= (\d^k u, r)_{\That} \qquad & 
			&\Forall r \in \d^k 
			\mathring{X}^k(\That).
		\end{alignat}
	\end{subequations}
	Applying standard estimates for saddle point systems gives
	\begin{align*}
		\| u_{\interior} \|_{H(\d^k; \That)} \leq C (\beta^k)^{-2} 
		\|u\|_{H(\d^k; 
			\That)},
	\end{align*}
	where $\beta^k$ is the inf-sup constant
	\begin{align*}
		\beta^k := \inf_{0 \neq r \in \d^k \mathring{X}^k(\That) } \sup_{0 
		\neq v 
			\in 
			\mathring{X}^k(\That) } \frac{ (\d^k v, r)_{\That} }{ 
			\|v\|_{H(\d^k, 
				\That)} 
			\|r\| }.
	\end{align*}
	
	We now show that $\beta^k \geq \beta_0 > 0$ for some $\beta_0$ 
	independent of 
	$p$. Let $r \in \d^k \mathring{X}^k(\That)$ be given. Thanks to 
	\cite[Theorem 
	5.1]{Ern2024}, there exists $v \in \mathring{X}^k(\That)$ such that $\d^k 
	v = 
	r$ and $\|v\|_{\That} \leq C \|r\|_{\That}$, where $C$ is independent of 
	$p$ 
	and 
	$r$. Then, $\|v\|_{H(\d^k, \That)} \leq C \|r\|_{\That}$, and so $\beta^k 
	\geq 
	C^{-1} > 0$. Consequently, for $k \in 0:2$, there holds $\|u_{\interior} 
	\|_{H(\d^k, 
		\That)} \leq C \|u\|_{H(\d^k, \That)}$. Inequality 
	\cref{eq:stable-decomp-reference-mass} then follows.
\end{proof}

We now turn to the type-$\rmone$/type-$\rmtwo$ decomposition. Let $\{ u_j^k,
\omega_{j, p}^{k, \rmone/\rmtwo} : j \in 1:\dim X^k(\That) \} \subset 
X^k(\That) 
\times \mathbb{R}$ denote the eigenfunctions and eigenvalues 
(in increasing order) of the following 
eigenvalue problem: 
\begin{subequations}
	\label{eq:typeI-typeII-eigenproblem}
	\begin{alignat}{2}
		(u_i^k, u_j^k)_{\That} &= \omega_{i, p}^{k, \rmone/\rmtwo} 
		\delta_{ij} 
		\qquad & &\Forall i, j \in 1:\dim 
		X^k(\That), \\
		(u_{i, \rmone}^k, u^k_{j, \rmone})_{\That} + (u_{i, 
			\rmtwo}^k, u_{j, \rmtwo}^{k})_{\That} &= \delta_{ij} 
		\qquad & &\Forall i, j \in 1:\dim X^k(\That).
	\end{alignat}
\end{subequations}

\noindent Then, \cref{lem:d-preserves-basis-functions} and
standard arguments show that the $L^2(\Khat)$ stability of the 	 
type-$\rmone$/type-$\rmtwo$ decomposition is determined by the minimal 
eigenvalue 
$\omega_{1, p}^{k, \rmone/\rmtwo}$.
\begin{lemma} \label{lem:stable-decomp-one-two-ref}
	For all $u \in X^k(\That)$, $\d^k u = \d^k u_{\rmone}$ and there holds
	\begin{align} \label{eq:stable-decomp-one-two-ref}
		\frac{1}{2} \|u\|_{\That}^2 \leq \|u_{\rmone}\|_{\That}^2 + 
		\|u_{\rmtwo}\|_{\That}^2 \leq \frac{1}{\omega_{1, p}^{k, 
		\rmone/\rmtwo}} 
		\|u\|_{\That}^2.
	\end{align}
\end{lemma}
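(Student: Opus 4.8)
The plan is to read \cref{eq:stable-decomp-one-two-ref} off a simultaneous diagonalization of two inner products. First I would dispatch the identity $\d^k u = \d^k u_{\rmone}$: when $k=0$ there are no type-$\rmtwo$ basis functions and there is nothing to prove, while for $k \geq 1$ the reference-cell statement \cref{lem:d-preserves-basis-functions} gives $\phi_{S,n}^{k,\rmtwo} = \d^{k-1}\phi_{S,n}^{k-1,\rmone}$ for every type-$\rmtwo$ basis function, so the complex property $\d^k \circ \d^{k-1} = 0$ forces $\d^k u_{\rmtwo} = 0$, and hence $\d^k u = \d^k u_{\rmone} + \d^k u_{\rmtwo} = \d^k u_{\rmone}$.

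For the norm bounds, I would observe that the bilinear form $b(u,v) := (u_{\rmone}, v_{\rmone})_{\That} + (u_{\rmtwo}, v_{\rmtwo})_{\That}$ --- where the subscripts denote the components of the \emph{linear} direct-sum decomposition $X^k(\That) = X^{k,\rmone}(\That) \oplus X^{k,\rmtwo}(\That)$ --- is a symmetric positive definite form on the finite-dimensional space $X^k(\That)$, as is the $L^2(\That)$ inner product. The eigenproblem \cref{eq:typeI-typeII-eigenproblem} is exactly the generalized eigenproblem for this pair, so its eigenfunctions $\{u_j^k\}$ form a basis of $X^k(\That)$ that is simultaneously $b$-orthonormal and $L^2(\That)$-orthogonal, with $\omega_{1,p}^{k,\rmone/\rmtwo}$ the smallest eigenvalue. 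Expanding $u = \sum_j c_j u_j^k$ and using both normalizations in \cref{eq:typeI-typeII-eigenproblem}, together with the linearity of the type-$\rmone$/type-$\rmtwo$ projections (so that $u_{\rmone} = \sum_j c_j u_{j,\rmone}^k$ and $u_{\rmtwo} = \sum_j c_j u_{j,\rmtwo}^k$), yields $\|u_{\rmone}\|_{\That}^2 + \|u_{\rmtwo}\|_{\That}^2 = b(u,u) = \sum_j c_j^2$ and $\|u\|_{\That}^2 = \sum_j c_j^2 \, \omega_{j,p}^{k,\rmone/\rmtwo}$; the upper bound in \cref{eq:stable-decomp-one-two-ref} is then immediate from $\sum_j c_j^2 \leq (\omega_{1,p}^{k,\rmone/\rmtwo})^{-1}\sum_j c_j^2 \, \omega_{j,p}^{k,\rmone/\rmtwo}$.

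The lower bound is even softer: it is equivalent to the assertion that the largest eigenvalue of \cref{eq:typeI-typeII-eigenproblem} is at most $2$, i.e. $\|u_{\rmone} + u_{\rmtwo}\|_{\That}^2 \leq 2\|u_{\rmone}\|_{\That}^2 + 2\|u_{\rmtwo}\|_{\That}^2$, which is the elementary inequality $\|a+b\|^2 \leq 2\|a\|^2 + 2\|b\|^2$ valid in any inner product space (expand and apply Cauchy--Schwarz followed by AM--GM). I do not anticipate a genuine obstacle here --- the argument needs only that the type-$\rmone$/type-$\rmtwo$ splitting is a direct sum, hence has linear projections, and the standard simultaneous diagonalization of two symmetric positive definite forms. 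The single point requiring a moment's care is the positive-definiteness of $b$, which is immediate since $b(u,u)=0$ forces $u_{\rmone}=u_{\rmtwo}=0$ and therefore $u=0$; this is exactly parallel to the reference-cell reasoning already used for the interior--interface splitting in \cref{thm:spectral-equiv-reference-stiffness}.
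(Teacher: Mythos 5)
Your argument is correct and is essentially the paper's proof: the first claim follows from \cref{lem:d-preserves-basis-functions} exactly as you say, and the upper bound comes from the same expansion $u = \sum_j c_j u_j^k$ in the eigenbasis of \cref{eq:typeI-typeII-eigenproblem}, giving $\|u_{\rmone}\|_{\That}^2 + \|u_{\rmtwo}\|_{\That}^2 = \sum_j c_j^2$ and $\|u\|_{\That}^2 = \sum_j c_j^2\,\omega_{j,p}^{k,\rmone/\rmtwo}$. The only difference is that you spell out the elementary lower bound $\|u_{\rmone}+u_{\rmtwo}\|^2 \le 2\|u_{\rmone}\|^2 + 2\|u_{\rmtwo}\|^2$, which the paper leaves implicit.
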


\subsection{Energy stability on physical cell}

Given an element in $T \in \mesh$ and $u \in X^k$, define
\begin{align*}
	a_{T}^k(u, v) := (u, \beta v)_{T} + (\d^k u, \alpha \d^k v)_T  
	\ \ \text{and} \ \ 
	 \|u\|_{a, T}^2 := a_T^k(u, u) \qquad \Forall u, 
	v \in X^k(T).
\end{align*}
We again use the notation $u_{\interface}$, $u_{\interior}$, $u_{\rmone}$, 
and 
$u_{\rmtwo}$ to denote the decomposition of $u \in X^k$ into its components 
in 
the decompositions \cref{eq:xk-interior-interface-typeI-typeII-split}. 
Standard scaling arguments applied to 
\cref{thm:spectral-equiv-reference-stiffness} and 
\cref{lem:stable-decomp-one-two-ref} show that
these decompositions are energy stable.

\begin{corollary} \label{cor:stable-decomp-interior-interface}
	There exist constants $C_1, C_2 > 0$ depending only on shape 
	regularity, 
	$d$, and $k$ such that for all $u \in X^k_p(\mesh)$ 
	and $T \in \mesh$, there holds 
	\begin{alignat}{2}
		\label{eq:stable-decomp-physical-stiffness}
		\frac{1}{2} \|u\|_{a, T}^2 &\leq \| u_{\interface} \|_{a, T}^2 + 
		\| 		
		u_{\interior} \|_{a, T}^2 & &\leq C_1 \max\left\{ 1, 
		\frac{\beta}{\alpha} h_T^2 \right\}  \|u\|_{a, T}^2, \\
		\label{eq:stable-decomp-one-two-physical-stiffness}
		\frac{1}{2} \|u\|_{a, T}^2 &\leq \| u_{\rmone} \|_{a, T}^2 + 
		\| 		
		u_{\rmtwo} \|_{a, T}^2 & &\leq \frac{C_2}{\omega_{1, p}^{k, 
				\rmone/\rmtwo}}   \|u\|_{a, T}^2,
	\end{alignat}
	where $\omega_{1, p}^{k, \rmone/\rmtwo}$ is defined in 
	\cref{eq:typeI-typeII-eigenproblem}. 
\end{corollary}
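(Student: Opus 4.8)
The plan is to transfer the two reference-cell estimates (\cref{thm:spectral-equiv-reference-stiffness} and \cref{lem:stable-decomp-one-two-ref}) to a physical cell by the standard affine scaling argument, so that the proof reduces to bookkeeping of powers of $h_T$. Fix $T \in \mesh$ and write $u|_T = \pullback_T^k(\hat u)$ with $\hat u \in X^k(\That)$. Because the interior/interface and type-$\rmone$/type-$\rmtwo$ splittings \eqref{eq:xk-interior-interface-typeI-typeII-split} are spanned by basis functions and $\pullback_T^k$ carries reference basis functions to physical ones, both decompositions commute with the pullback: $u_{\interface}|_T = \pullback_T^k(\hat u_{\interface})$, $u_{\interior}|_T = \pullback_T^k(\hat u_{\interior})$, and similarly $u_{\rmone}|_T = \pullback_T^k(\hat u_{\rmone})$, $u_{\rmtwo}|_T = \pullback_T^k(\hat u_{\rmtwo})$, by uniqueness of the corresponding reference decompositions. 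Since $\d^k \circ \pullback_T^k = \pullback_T^{k+1} \circ \d^k$, the exterior derivative of each component is likewise the pullback of the corresponding reference component.

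Next I would record the affine scaling bounds. By shape regularity the singular values of $J_T$ are all comparable to $h_T$ and $|\det J_T| \approx h_T^d$, so \eqref{eq:pullback-3d-def} yields constants $c_1, c_2 > 0$ depending only on shape regularity and $d$ with
\begin{equation*}
	c_1 h_T^{d-2k} \|\hat v\|_{\That}^2 \le \|\pullback_T^k \hat v\|_T^2 \le c_2 h_T^{d-2k} \|\hat v\|_{\That}^2 \qquad \forall \hat v \in X^k(\That),
\end{equation*}
and the same statement with $k+1$ in place of $k$, applied to $\d^k \hat v$, gives $\|\d^k v\|_T^2 \approx h_T^{d-2k-2} \|\d^k \hat v\|_{\That}^2$. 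Thus the $L^2$-scaling of a $k$-form and of its exterior derivative differ by a factor $h_T^2$; this mismatch is exactly what produces the $\max\{1, (\beta/\alpha) h_T^2\}$ in \eqref{eq:stable-decomp-physical-stiffness}.

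For \eqref{eq:stable-decomp-physical-stiffness}, the lower bound is immediate: $a_T^k(\cdot,\cdot)$ is an inner product (as $\beta > 0$), so $\|u\|_{a,T}^2 = \|u_{\interface} + u_{\interior}\|_{a,T}^2 \le 2(\|u_{\interface}\|_{a,T}^2 + \|u_{\interior}\|_{a,T}^2)$. For the upper bound I would pull back, invoke $\|\hat u_{\interface}\|_{\That}^2 + \|\hat u_{\interior}\|_{\That}^2 \le C\|\hat u\|_{H(\d^k;\That)}^2$ and $\|\d^k\hat u_{\interface}\|_{\That}^2 + \|\d^k\hat u_{\interior}\|_{\That}^2 = \|\d^k\hat u\|_{\That}^2$ from \cref{thm:spectral-equiv-reference-stiffness}, and push forward with the scaling bounds; the term $\|\d^k\hat u\|_{\That}^2$ inside $\|\hat u\|_{H(\d^k;\That)}^2$ re-emerges weighted by $h_T^{d-2k} = h_T^2\,h_T^{d-2k-2}$, i.e.\ by $h_T^2$ relative to $\|\d^k u\|_T^2$, whence
\begin{equation*}
	\|u_{\interface}\|_{a,T}^2 + \|u_{\interior}\|_{a,T}^2 \lesssim \beta\|u\|_T^2 + (\beta h_T^2 + \alpha)\|\d^k u\|_T^2 \le 2\max\left\{1, \tfrac{\beta}{\alpha} h_T^2\right\} \|u\|_{a,T}^2,
\end{equation*}
which is \eqref{eq:stable-decomp-physical-stiffness} with $C_1$ depending only on shape regularity, $d$, and $k$.

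For \eqref{eq:stable-decomp-one-two-physical-stiffness} the scheme is the same, but now \cref{lem:stable-decomp-one-two-ref} supplies $\d^k\hat u = \d^k\hat u_{\rmone}$ (hence $\d^k u_{\rmtwo} = 0$ and $\d^k u = \d^k u_{\rmone}$) together with $\|\hat u_{\rmone}\|_{\That}^2 + \|\hat u_{\rmtwo}\|_{\That}^2 \le \frac{1}{\omega_{1,p}^{k,\rmone/\rmtwo}} \|\hat u\|_{\That}^2$ — an $L^2$ estimate with no derivative term — so there is no scaling mismatch and no $h_T^2$ factor. Pushing forward gives $\|u_{\rmone}\|_{a,T}^2 + \|u_{\rmtwo}\|_{a,T}^2 \lesssim \frac{\beta}{\omega_{1,p}^{k,\rmone/\rmtwo}}\|u\|_T^2 + \alpha\|\d^k u\|_T^2 \le \frac{C_2}{\omega_{1,p}^{k,\rmone/\rmtwo}}\|u\|_{a,T}^2$ (using $\omega_{1,p}^{k,\rmone/\rmtwo} \le 2$, which follows from the triangle inequality applied in \eqref{eq:typeI-typeII-eigenproblem}), and the lower bound is again Cauchy--Schwarz in $a_T^k(\cdot,\cdot)$. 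The only genuinely delicate point in the whole argument is keeping the two distinct $h_T$-scalings ($h_T^{d-2k}$ for the $L^2$ term, $h_T^{d-2k-2}$ for the $\d^k$ term) straight, since it is precisely their ratio that forces the $\max\{1,(\beta/\alpha)h_T^2\}$ factor; the rest is a routine combination of the two reference-cell results with standard affine estimates.
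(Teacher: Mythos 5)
Your proof is correct and follows essentially the same route as the paper: pull back to the reference cell, invoke \cref{thm:spectral-equiv-reference-stiffness} and \cref{lem:stable-decomp-one-two-ref}, and push forward with standard shape-regularity scaling, with the $\max\{1,(\beta/\alpha)h_T^2\}$ factor emerging from the $h_T^2$-mismatch between the $L^2$-scalings of a $k$-form and of its exterior derivative. The only cosmetic differences are that you write the pullback scaling explicitly as $h_T^{d-2k}$ where the paper works with spectral norms $|G_T^k|$, $|(G_T^k)^{-1}|$ of the pullback factor, and that you bound $u_\interface$ and $u_\interior$ simultaneously on the reference cell rather than bounding $u_\interior$ first and recovering $u_\interface$ by the triangle inequality.
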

\begin{proof}	
	Let $T \in \mesh$ and $\hat{u} := (\pullback_T^k)^{-1} 
	u$ so that $\d^k u = \pullback_T^{k+1} (\d^k \hat{u})$. 
	
	\noindent \textbf{Step 1: $H(\d^k; T)$-stability of $u_{\interior}$. } 
	Note 
	that 
	$\pullback^k_T$ is of the form  $\pullback^k_T \hat{v} = G_T^k \hat{v} 
	\circ 
	F_T^{-1}$ for some constant or matrix $G^k_T$. Thus, there holds
	\begin{align*}
		\|u_{\interior}\|_{T}^2 
			= |\det J_T| \cdot 
			\| G^k_T  \hat{u}_{\interior} \|_{\That}^2 
		&\leq |\det J_T| \cdot |G^k_T|^2  
			\|\hat{u}_{\interior} \|_{\That}^2 \\
		&\leq   |\det J_T| \cdot |G^k_T|^2  \left( \|\hat{u}\|_{\That}^2 + 
		\|\d^k 
		\hat{u} \|_{\That}\|^2 \right),
	\end{align*}
	where $|G^k_T|$ also denotes the spectral norm on matrices. Now,
	\begin{align*}
		\|\hat{u}\|_{\That}^2 = |\det J_T^{-1}| \cdot \| (G^k_T)^{-1} 
		u\|_{T}^2 
		\leq 
		|\det J_T|^{-1} |(G^k_T)^{-1}|^2 \|u\|_{T}^2
	\end{align*}
	and
	\begin{align*}
		\|\d^k \hat{u} \|_{\That}^2 = |\det J_T^{-1}| \cdot \| 
		(G^{k+1}_T)^{-1} 
		\d^k 
		u\|_{T}^2 \leq  |\det J_T|^{-1} |(G^{k+1}_T)^{-1}|^2 \| \d^k 
		u\|_{T}^2,
	\end{align*}
	and so
	\begin{align*}
		\|u_{\interior}\|_{T}^2 \leq |G^k_T|^2 \left( |(G^k_T)^{-1}|^2 
		\|u\|_{T}^2 +  |(G^{k+1}_T)^{-1}|^2 \| \d^k u\|_{T}^2 \right).
	\end{align*}
	One may then verify that $|G^k_T| \cdot |(G^k_T)^{-1}| \leq C$ and 
	$|G^k_T| 
	\cdot 
	|(G^{k+1}_T)^{-1}| \leq C h_T$ for some $C > 0$ 
	depending only on shape regularity, $d$, and $k$. Thus,
	\begin{align*}
		\| u_{\interior} \|_{T}^2 \leq C \left( \|u\|_{T}^2 + h_T^2 \|\d^k 
		u\|_{T}^2 \right).
	\end{align*}
	Similar scaling arguments applied to \cref{eq:stable-decomp-reference-dk} 
	show that
	\begin{align}
		\label{proof:eq:interior-dk-bound}
		\|\d^k u_{\interior} \|_{T} \leq C \| \d^k u\|_{T}.
	\end{align}
	
	\noindent \textbf{Step 2: \cref{eq:stable-decomp-physical-stiffness}. } 
	We 
	then obtain
	\begin{align*}
		(u_{\interior}, \beta u_{\interior})_{T} &\leq C \beta \left\{  
		(u, u)_{T} + h_T^2 (\d^k u, 
		\d^k u)_{T} \right\} \\
		&\leq C \left\{  (u, \beta u)_{T} + 
		\frac{\beta}{\alpha} h_T^2 (\d^k u, \alpha \d^k u)_{T} \right\} 
		\leq C \max\left\{ 1, \frac{\beta}{\alpha} h_T^2 \right\} a^k_{T}(u, 
		u),
	\end{align*}
	and so \cref{proof:eq:interior-dk-bound} gives 
	\begin{align*}
		a^k_{T}(u_{\interior}, u_{\interior}) \leq C \max\left\{ 1, 
		\frac{\beta}{\alpha} h_T^2 \right\} 
		a^k_{T}(u, u).
	\end{align*}
	Inequality \cref{eq:stable-decomp-physical-stiffness} now follows from 
	the 
	bound $\|u_{\interface}\|_{a, T}^2 \leq 2 \left( \|u\|_{a, T}^2 + 
	\|u_{\interior}\|_{a, T}^2 	\right)$.

	\noindent \textbf{Step 3: 
	\cref{eq:stable-decomp-one-two-physical-stiffness}. 
	} Inequality \cref{eq:stable-decomp-one-two-physical-stiffness} follows 
	from 
	\cref{lem:stable-decomp-one-two-ref} using analogous arguments from Steps 
	1 
	and 2 on noting that $\omega_{1, p}^{k, \rmone/\rmtwo} \leq 2$ by 
	\cref{eq:stable-decomp-one-two-ref}.
\end{proof}

We may further decompose the interior component into individual basis 
functions, 
and this decomposition is stable for any choice of $\alpha$ and $\beta$.
\begin{lemma}
	\label{lem:stable-decomp-physical-interior}
	Let $N_{K, \interior} := \dim \mathring{X}^k(T)$ and let $\{ \phi_{T, 
	i}^k : 
	i \in 1:N_{K, \interior} \}$ denote interior basis functions supported on 
	$T$.
	Then, there exists a constant $C > 0$ depending only on shape regularity 
	such 
	that if $u_{\interior} = \sum_{i=1}^{N_{K, \interior}} u_{\interior, i} 
	\phi_{T, i}^k$, 
	then 
	\begin{align}
		\label{eq:stable-decomp-physical-interior}
		C^{-1} \| u_{\interior}\|_{a, T}^2 \leq  \sum_{i=1}^{N_{K, \interior}}
		|u_{\interior, i}|^2 \| \phi_{T, i}^{k}\|_{a, T}^2 \leq C 
		\|u_{\interior}\|_{a, T}^2.
	\end{align}
\end{lemma}
\begin{proof}
	Thanks to \cref{lem:ref-element-matrix-properties}, 
	\cref{eq:stable-decomp-physical-interior} holds in the case that $T = 
	\That$ 
	is the reference cell and $(\alpha, \beta) \in \{  (0, 1), (1, 0) \}$. A 
	standard scaling argument similar to the one in Step 1 of the 
	proof of \cref{cor:stable-decomp-interior-interface} shows that 
	\cref{eq:stable-decomp-physical-interior} holds for all $T \in \mesh$ and 
	$(\alpha, \beta) \in \{  (0, 1), (1, 0) \}$, where 
	$C$ only depend on shape regularity. The case of general $\alpha$ and 
	$\beta$ 
	follows from linearity.	
\end{proof}

\subsection{Proof of \cref{lem:physical-element-decouple}}
\label{sec:proof-physical-element-decouple}

The result follows from  
\cref{thm:generic-subspace-decomp-interface-interior-split} applied on to a 
single element with $X_0^k = X^k$. \hfill \qedsymbol

\section{Further discussion of $\ph{X^{k-1,\rmone}}{X^{k,\rmone}}$} 

We first prove \cref{lem:ph-typeI-typeII-stable} and then discuss the 
$p$-dependence of the constant $C_{\rmone/\rmtwo}$.

\subsection{Proof of \cref{lem:ph-typeI-typeII-stable}}
\label{sec:proof-ph-typeI-typeII-stable}

Let $u \in X^k$ and let $u_0 \in W^k$, $u_{J} \in \left. \d^{k-1} X^{k-1} 
\right|_{\star J}$, $J \in \Delta_{k-1}(\mesh)$, and $u_{L} \in \left. X^k 
\right|_{\star L}$, $L \in \Delta_k(\mesh)$, be such that
\begin{align*}
	u_0 + \sum_{J \in \Delta_{k-1}(\mesh)} u_J + \sum_{L \in 
		\Delta_{k}(\mesh)} u_L &= u, \\
	\|u_0\|_{a}^2 + \sum_{J \in \Delta_{k-1}(\mesh)} \|u_J\|_{a}^2 + \sum_{L 
	\in 
		\Delta_{k}(\mesh)} \|u_L\|_{a}^2 &\leq C_{\mathrm{HT}} \|u\|_{a}^2,
	%
\end{align*}
where $\|\cdot\|_{a}$ again denotes the energy norm.
Thanks to \cref{eq:dk-image-decomp}, $\left. 
\d^{k-1} X^{k-1} \right|_{\star J} = \left. \d^{k-1} X^{k-1, \rmone} 
\right|_{\star J}$, and so $u_{J} \in \left. \d^{k-1} X^{k-1, \rmone} 
\right|_{\star J}$ for all $J \in \Delta_{k-1}(\mesh)$.

For each $L \in \Delta_k(\mesh)$, there exist unique $u_{L, \rmone} \in 
\left. 
X^{k, \rmone} \right|_{\star L}$ and $u_{L, \rmtwo} \in \left. X^{k, \rmtwo}  
\right|_{\star L}$ such that $u_{L} = u_{L, \rmone} + u_{L, \rmtwo}$ thanks 
to 
\cref{eq:typeI-typeII-direct-sum} restricted to $\star L$. Moreover, summing 
\cref{eq:stable-decomp-one-two-physical-stiffness} over the cells in $\star 
L$, 
we obtain
\begin{align}\label{eq:proof:typI-typeII-patch-stability}
	\| u_{L, \rmone} \|_{a, \star L}^2 + \|u_{L, \rmtwo} \|_{a, \star L}^2 
	&\leq 
	\frac{C}{\omega_{1, p}^{k, \rmone/\rmtwo}} 
	\|u_L\|_{a, \star L}^2.
\end{align}
Thanks to \cite[Corollary 2]{Licht2017complexes}, the complex 
\begin{equation*}
	\begin{tikzcd}[ampersand replacement=\&]
		\left. X^{k-1} \right|_{\star L} \arrow[r, "\d^{k-1}"] \& \left. 
		X^{k} 
		\right|_{\star L} 
		\arrow[r, "\d^k"] \& 
		\d^k \left. X^{k} \right|_{\star L} \arrow[r] \& 0
	\end{tikzcd}
\end{equation*}
is exact  since $\star L$ is contractible, and so $u_{L, \rmtwo} \in \d^{k-1} 
\left. X^{k-1} \right|_{\star L} = \d^{k-1}	\left. X^{k-1, \rmone} 
\right|_{\star L}$.

For $J \in \Delta_{k-1}(\mesh)$, let $\Delta_{k}(J)$ denote that subsimplices 
of 
$J$ of dimension $k$ and define
\begin{align*}
	\tilde{u}_J = u_J + \frac{1}{|\Delta_k(J)|} \sum_{L \in \Delta_k(J)} 
	u_{L, 
		\rmtwo}.
\end{align*}
Then, $\tilde{u}_J \in \d^{k-1}	\left. X^{k-1, \rmone} 
\right|_{\star L}$ and
\begin{align*}
	\| \tilde{u}_J \|_a^2 \leq 2 \left( \|u_J\|_a^2 + \frac{1}{|\Delta_k(J)|} 
	\sum_{L 
		\in \Delta_k(J)} \| u_{L,\rmtwo}\|_a^2 \right).
\end{align*}
Consequently, there holds
\begin{align*}
	u_0 +  \sum_{J \in \Delta_{k-1}(\mesh)} \tilde{u}_J + \sum_{L \in 
		\Delta_{k}(\mesh)} u_{L, \rmone} = u_0 + \sum_{J \in 
		\Delta_{k-1}(\mesh)} 
	u_J + \sum_{L \in \Delta_{k}(\mesh)} u_L = u,
\end{align*}
and applying \cref{eq:proof:typI-typeII-patch-stability} gives
\begin{align*}
	&\|u_0\|_a^2 + \sum_{J \in \Delta_{k-1}(\mesh)} \|\tilde{u}_J\|_a^2 + 
	\sum_{L 
		\in 
		\Delta_{k}(\mesh)} \|u_{L, \rmone}\|_a^2 \\
	&\qquad \leq 2 \left( \|u_0\|_a^2 + \sum_{J 
		\in \Delta_{k-1}(\mesh)} \|\tilde{u}_J\|_a^2 + \sum_{L \in 
		\Delta_{k}(\mesh)} \left( \|u_{L, \rmone}\|_a^2 + \|u_{L, 
		\rmtwo}\|_a^2\right) \right) \\
	&\qquad \leq C_{\mathrm{HT}} \frac{C}{\omega_{1, p}^{k, \rmone/\rmtwo}}   
	\|u\|_{a}^2,
\end{align*}
which completes the proof with $C_{\rmone/\rmtwo} = C/\omega_{1, p}^{k, 
	\rmone/\rmtwo}$. \hfill \qedsymbol

\subsection{The $p$-dependence of $C_{\rmone/\rmtwo}$}
\label{sec:typeI-typeII-constant-p}

The previous section shows that we may take $C_{\rmone/\rmtwo} = C/\omega_{1, 
	p}^{k, \rmone/\rmtwo}$ in 
\cref{lem:ph-typeI-typeII-stable}. We show the values of $\omega_{1, p}^{k, 
\rmone/\rmtwo}$ for $\Ned_p$ and $\RT_p$ with $p \in 1:10$ in 
\cref{fig:decoupling}. It appears that $\omega_{1, p}^{1, \rmone/\rmtwo} \sim 
\mathcal{O}(p^{-3})$, while $\omega_{1, p}^{2, \rmone/\rmtwo} \sim 
\mathcal{O}(p^{-2})$. Thus, the type-I/type-II splitting approach may incur 
an algebraic factor of $p$ in the condition number estimate for the Riesz map 
problems. However, we only observed such a drastic growth in terms of 
iteration counts for the solvers $\ph{X^0}{X^{1, \rmone}}$ and 
$\ph{X_{\interface}^0}{X_{\interface}^{1, \rmone}}+\jacobi{X^1_{\interior}}$ 
when $\alpha = 10^{-3}$ and $\beta = 1$, as shown in 
\cref{tab:hcurl-unit-cube}.

\begin{figure}
	\centering
	\pgfplotstableread[col sep=comma,]{./results/decoupling.N1curl.3d.csv}\tableA
\pgfplotstableread[col sep=comma,]{./results/decoupling.N1div.3d.csv}\tableB
\begin{tikzpicture}[scale=0.75]        
    \begin{loglogaxis}[
         xlabel={Degree, $p$},
         ylabel={$\omega^{k, \rmone/\rmtwo}_{1,p}$},
         ymax=2E0,
         log x ticks with fixed point,
         x tick label style={/pgf/number format/1000 sep=\,},
         xtick={1,2,4,6,8,10},
         legend cell align={left},
         legend style={at={(0.95,0.95)},anchor=north east}
        ]
        \addplot table [x=degree,y=omega0, col sep=comma] {\tableA};
        \addlegendentry{$\Ned_p$}
        \addplot table [x=degree,y=omega0, col sep=comma] {\tableB};
        \addlegendentry{$\RT_p$}
        \addplot [domain=4:10] {1/pow(x,2)} node[above, yshift=-2pt, midway, anchor=south west] {$p^{-2}$};
        \addplot [domain=4:8] {1/pow(x,3)}; 
        \addplot [domain=8:10] {1/pow(x,3)} node[above, yshift=-2pt, midway, anchor=south west] {$p^{-3}$};
    \end{loglogaxis}
\end{tikzpicture}
	\caption{
		Type based decoupling constant $\omega^{k,\rmone/\rmtwo}_{1,p}$ 
		on an equilateral reference tetrahedron.
	}
	\label{fig:decoupling}
\end{figure}

\section{Augmented Lagrangian preconditioning for Hodge Laplacians}

We first recall a few properties of closed Hilbert complexes from 
\cite{Arnold2010}. 
Let $(\{W^k\}, \{\d^k\})$ be a closed Hilbert complex with inner-product 
$\langle \cdot, \cdot \rangle$ inducing the norm $\|\cdot\|$, 
and let $(\{V^k\}, \{\d^k\})$ be the 
associated domain complex:
\begin{equation}\label{eq:abstract-short-sequence}
	\begin{tikzcd}[ampersand replacement=\&]
		\cdots \arrow[r, "\d^{k-2}"] \& V^{k-1} \arrow[r, "\d^{k-1}"] \& 
		V^{k} 
		\arrow[r, "\d^k"] \& V^{k+1} \arrow[r, "\d^{k+1}"] \& \cdots,
	\end{tikzcd}
\end{equation}
where the inner-product on $V^k$ is $\langle \cdot, \cdot \rangle + 
\langle \d^k \cdot, \d^k \cdot \rangle$. 
Two important subspaces are the space of harmonic forms $\mathfrak{H}^k$ 
and the orthogonal complement of the kernel of $\d^k$:
\begin{align*}
	\mathfrak{H}^k &:= \{ \mathfrak{h} \in V^k : \d^{k} \mathfrak{h} = 0 
	\text{ and } \langle \mathfrak{h}, \d^{k-1} 
	\sigma \rangle = 0 \ \forall \sigma \in V^{k-1} \}, \\
	\mathfrak{Z}^{k, \perp} &:= \{ v \in V^{k} : \langle v, z \rangle = 0 \ 
	\forall z \in V^k \text{ with } \d^k z = 0 \}.
\end{align*}
In particular, since the complex is closed, we have the following 
orthogonal decomposition:
\begin{align*}
	V^{k} = \mathfrak{Z}^{k-1, \perp} \oplus \mathfrak{H}^k 
		    \oplus \mathfrak{Z}^{k, \perp}
\end{align*}

The mixed formulation of the Hodge Laplace problem associated to 
\cref{eq:abstract-short-sequence} reads as follows:
Find $(\sigma, u, p) \in V^{k-1} \times V^k \times \mathfrak{H}^{k}$ such that
\begin{subequations}\label{eq:abstract-hodge-laplace}
	\begin{alignat}{2}
		-\langle \sigma, \tau \rangle + \langle u, \d^{k-1} \tau \rangle &= 0 
		\qquad & &\forall \tau \in V^{k-1}, \\
		\langle \d^{k-1} \sigma, v \rangle + \langle p, v \rangle + \langle 
		\d^k u, \d^k v \rangle &= \langle f, v \rangle \qquad & &\forall v 
		\in 
		V^{k}, \\
		\langle u, q \rangle &= 0 \qquad & &\forall q \in \mathfrak{H}^{k}.
	\end{alignat}
\end{subequations}
We may combine the bilinear forms in \cref{eq:abstract-hodge-laplace} into 
one 
large symmetric bilinear form
\begin{align*}
	A(\sigma, u, p; \tau, v, q) := \langle \d^k u, \d^k v \rangle + \langle 
	\d^{k-1} \sigma, v \rangle + \langle u, \d^{k-1} \tau \rangle - \langle 
	\sigma, \tau \rangle + \langle p, v \rangle + \langle u, q \rangle.
\end{align*} 
Given $\gamma > 0$, we also introduce the following weighted inner-product on 
$V^{k-1} \times V^k \times \mathfrak{H}^{k}$:
\begin{align*}
	B_{\gamma}(\sigma, u, p; \tau, v, q) &:= \gamma^{-1} \langle u - 
	u_{\mathfrak{H}}, v - v_{\mathfrak{H}} \rangle + \langle 
	u_{\mathfrak{H}}, 
	v_{\mathfrak{H}} \rangle + \langle \d^k u, \d^k v \rangle
	\\
	&\qquad + \langle \sigma, \tau \rangle + \gamma \langle \d^{k-1} \sigma, 
	\d^{k-1} \tau \rangle + \langle p, q \rangle.
\end{align*}

\begin{theorem} \label{thm:augmented-lagrangian-hl}
	Let $\gamma > 0$ and suppose that $V^{k-1}$ and $V^{k}$ are finite 
	dimensional. Consider the 
	following symmetric eigenvalue problem: Find $(\sigma, u, p) \in V^{k-1} 
	\times V^k \times \mathfrak{H}^{k}$ and $\lambda \in \mathbb{R}$ such that
	\begin{align} \label{eq:hodge-laplace-evals}
		A(\sigma, u, p; \tau, v, q) = \lambda B_{\gamma}(\sigma, u, p; \tau, 
		v, 
		q) \qquad 
		\forall (\tau, v, q) \in V^{k-1} \times V^k \times \mathfrak{H}^{k}.
	\end{align}
	All of the eigenpairs of \cref{eq:hodge-laplace-evals} are classified as 
	follows.
	\begin{enumerate}
		\item[(i)] For any $\sigma \in V^{k-1}$, $(\sigma, -\gamma \d^{k-1} 
		\sigma, 0)$ and $\lambda = -1$ is an eigenpair of 
		\cref{eq:hodge-laplace-evals}.
		
		\item[(ii)] For any $p \in \mathfrak{H}^k$, $(0, \lambda p, p)$ and 
		$\lambda = \pm 1$ is an eigenpair of \cref{eq:hodge-laplace-evals}.
		
		\item[(iii)] Let $u_{\perp} \in \mathfrak{Z}^{k, \perp}$ and $\mu > 
		0$ be an eigenpair of the following eigenvalue problem:
		\begin{align} \label{eq:hodge-laplace-u-evals-aux}
			\langle \d^k u_{\perp}, \d^k v_{\perp} \rangle = \mu \langle 
			u_{\perp}, v_{\perp} \rangle \qquad 
			\forall v_{\perp} \in \mathfrak{Z}^{k, \perp}.
		\end{align}
		Then, $(0, u_{\perp}, 0)$ and $\lambda = \frac{\mu \gamma}{\mu \gamma 
		+ 
			1}$ is an eignpair of \cref{eq:hodge-laplace-evals}.
		
		\item[(iv)] Let $\sigma_{\perp} \in \mathfrak{Z}^{k-1, \perp}$ and 
		$\nu > 0$ be an eigenpair of the following eigenvalue problem: 
		\begin{align} \label{eq:hodge-laplace-sigma-evals-aux}
			\langle \d^{k-1} \sigma_{\perp}, \d^{k-1} \tau_{\perp} \rangle = 
			\nu 
			\langle \sigma_{\perp}, \tau_{\perp} \rangle \qquad \forall 
			\tau_{\perp} \in 
			\mathfrak{Z}^{k-1, \perp}.
		\end{align}
		Then, the following is an eigenpair of \cref{eq:hodge-laplace-evals}:
		\begin{align*}
			(\sigma_{\perp}, \gamma \lambda^{-1} \d^{k-1} \sigma_{\perp}, 0) 
			\quad \text{and} 
			\quad \lambda = \frac{\nu \gamma }{\nu \gamma + 1}.	
		\end{align*}
	\end{enumerate}
\end{theorem}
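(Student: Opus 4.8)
The plan is to verify each of the four claimed eigenpair families directly by substitution into the generalized eigenvalue problem \cref{eq:hodge-laplace-evals}, exploiting the $L^2$-orthogonality of the Hodge decomposition \cref{eq:abstract-hodge} in both the $W^k$ and $V^k$ inner products, together with the complex property $\d^k \circ \d^{k-1} = 0$. The key structural fact I would record first is that for $(\sigma, u, p)$ with $u$ decomposed along \cref{eq:abstract-hodge} as $u = u_{\mathfrak{B}} + u_{\mathfrak{H}} + u_{\perp}$, we have $\langle \d^k u, \d^k v\rangle = \langle \d^k u_\perp, \d^k v_\perp\rangle$ (since $\d^k$ kills $\mathfrak{B}^k \oplus \mathfrak{H}^k$) and $\langle u - u_{\mathfrak{H}}, v - v_{\mathfrak{H}}\rangle = \langle u_{\mathfrak{B}}, v_{\mathfrak{B}}\rangle + \langle u_\perp, v_\perp\rangle$ by orthogonality. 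It is also convenient to note that $\d^{k-1}$ maps $\mathfrak{Z}^{k-1,\perp}$ isomorphically onto $\mathfrak{B}^k$, and that the three components of $u$ are mutually $\langle\cdot,\cdot\rangle$-orthogonal, so the quadratic forms $A$ and $B_\gamma$ decouple into independent blocks indexed by: (a) the $\sigma$ and $u_{\mathfrak{B}}$ directions, which are coupled; (b) the harmonic block $u_{\mathfrak{H}}, p$; (c) the $u_\perp$ block; and (d) the $\sigma_\perp, \d^{k-1}\sigma_\perp$ block. Cases (i)--(iv) are then precisely the eigenpairs of these decoupled blocks.

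Concretely, for (i) I would plug $(\sigma, -\gamma\d^{k-1}\sigma, 0)$ and a test triple $(\tau, v, q)$ into both sides. The terms $\langle \d^k u, \d^k v\rangle$ vanish since $u = -\gamma\d^{k-1}\sigma \in \mathfrak{B}^k \subset \ker\d^k$; the remaining terms of $A$ are $\langle \d^{k-1}\sigma, v\rangle + \langle u, \d^{k-1}\tau\rangle - \langle\sigma,\tau\rangle = \langle\d^{k-1}\sigma, v\rangle - \gamma\langle\d^{k-1}\sigma,\d^{k-1}\tau\rangle - \langle\sigma,\tau\rangle$, and matching against $-B_\gamma$ (using that $u_{\mathfrak{H}} = 0$ so the $\gamma^{-1}$ term contributes $\gamma^{-1}\langle u, v\rangle = -\langle\d^{k-1}\sigma, v\rangle$, and the $\langle p, q\rangle$ term vanishes) gives exactly $\lambda = -1$. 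For (ii), $(0, \lambda p, p)$ with $p\in\mathfrak{H}^k$: here $\d^k p = 0$, $p$ is $L^2$-orthogonal to $\mathfrak{B}^k$, and $A$ reduces to $\lambda\langle p, v\rangle + \langle p, v\rangle$ on the relevant components wait — more carefully, $A(0,\lambda p, p; \tau, v, q) = \langle p, v\rangle + \langle \lambda p, q\rangle$ and $B_\gamma(0,\lambda p, p;\tau,v,q) = \langle p, v_{\mathfrak{H}}\rangle + \langle p, q\rangle = \langle p, v\rangle$-ish plus $\langle p, q\rangle$; equating yields $\lambda^2 = 1$. I would do this bookkeeping carefully. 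For (iii) and (iv), substituting the stated forms reduces $A = \lambda B_\gamma$ to a scalar relation involving $\mu$ (resp. $\nu$): in (iii) the form $A$ gives $\langle\d^k u_\perp,\d^k v_\perp\rangle = \mu\langle u_\perp, v_\perp\rangle$ on one side and $B_\gamma$ gives $\gamma^{-1}\langle u_\perp, v_\perp\rangle + \langle\d^k u_\perp,\d^k v_\perp\rangle = (\gamma^{-1} + \mu)\langle u_\perp, v_\perp\rangle$, so $\lambda = \mu/(\gamma^{-1}+\mu) = \mu\gamma/(\mu\gamma+1)$; case (iv) is analogous after also checking that the $u = \gamma\lambda^{-1}\d^{k-1}\sigma_\perp$ component is consistent with the $\sigma$-equation, which is where the $\lambda^{-1}$ factor is forced.

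The main obstacle I anticipate is not any single computation but rather the completeness argument: showing that families (i)--(iv) exhaust \emph{all} eigenpairs, i.e. that these eigenvectors span $V^{k-1}\times V^k\times\mathfrak{H}^k$. For this I would argue that $B_\gamma$ is a genuine (positive-definite) inner product on the finite-dimensional space — this needs a short check that $B_\gamma(w,w) = 0 \implies w = 0$, using that $\|\sigma\|$ controls $\sigma$ and $\gamma^{-1}\|u - u_{\mathfrak{H}}\|^2 + \|u_{\mathfrak{H}}\|^2$ controls $u$ and $\|p\|$ controls $p$ — hence \cref{eq:hodge-laplace-evals} is a symmetric-definite generalized eigenproblem with a full $B_\gamma$-orthogonal eigenbasis. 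Then a dimension count: decompose $V^{k-1} = \mathfrak{B}^{k-1}\oplus\mathfrak{H}^{k-1}\oplus\mathfrak{Z}^{k-1,\perp}$ and $V^k = \mathfrak{B}^k\oplus\mathfrak{H}^k\oplus\mathfrak{Z}^{k,\perp}$, observe that $\dim\mathfrak{B}^k = \dim\mathfrak{Z}^{k-1,\perp}$, and tally: family (i) accounts for $\dim V^{k-1}$ vectors, family (ii) for $2\dim\mathfrak{H}^k$, family (iii) for $\dim\mathfrak{Z}^{k,\perp}$, and family (iv) for $\dim\mathfrak{Z}^{k-1,\perp} = \dim\mathfrak{B}^k$; summing and comparing with $\dim V^{k-1} + \dim V^k + \dim\mathfrak{H}^k$ — being careful that the $\sigma$-part of families (i) and (iv) together must cover all of $V^{k-1}$ while their $u$-parts cover $\mathfrak{B}^k$, and that $\mathfrak{H}^{k-1}$ enters only through family (i) with $\d^{k-1}\sigma = 0$ — should close the count exactly. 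The slightly delicate point is confirming linear independence across families (they are $B_\gamma$-orthogonal by the block decoupling, so this is automatic once the block structure is established), and handling the harmonic directions $\mathfrak{H}^{k-1}$ correctly within family (i).
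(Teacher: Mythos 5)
Your proposal is correct and follows essentially the same route as the paper's proof: verify each family (i)--(iv) by direct substitution, then establish completeness by showing the families supply $\dim V^{k-1} + \dim V^k + \dim\mathfrak{H}^k$ linearly independent eigenvectors via a Hodge-decomposition dimension count. You are somewhat more explicit than the paper about why the count suffices (positive-definiteness of $B_{\gamma}$ yields a full $B_\gamma$-orthogonal eigenbasis, and the distinct $\sigma$- or $p$-components separate families sharing an eigenvalue), but this makes rigorous what the paper leaves implicit rather than changing the argument.
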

\begin{proof}
	(i-iv) may be verified by direct computation. We only show the details 
	for 
	(iii) and (iv). Let $u_{\perp}$, $\mu$, and $\lambda$ be as in 
	(iii) and $(\tau, v, q) \in V^{k-1} \times V^k \times \mathfrak{H}^k$. 
	Then,
	\begin{align*}
		A(0, u_{\perp}, 0; \tau, v, p) &= \langle \d^k u_{\perp}, \d^k v 
		\rangle 
		+ \langle u_{\perp}, \d^{k-1} \tau \rangle + \langle u_{\perp}, q 
		\rangle, 
		=  \langle \d^k u_{\perp}, \d^k v_{\perp} \rangle \\
		B_{\gamma}(0, u_{\perp}, 0; \tau, v, p) &= \frac{1}{\gamma} \langle 
		u_{\perp}, v - 
		v_{\mathfrak{H}} \rangle + \langle \d^k u_{\perp}, \d^k v \rangle = 
		\frac{1}{\gamma} \langle u_{\perp}, v_{\perp} \rangle + \langle \d^k 
		u_{\perp}, \d^k v_{\perp} \rangle.
	\end{align*}
	Consequently, there holds
	\begin{align*}
		\lambda B_{\gamma}(0, u_{\perp}, 0; \tau, v, p) = \frac{\mu 
		\gamma}{\mu 
			\gamma + 1} \left( \frac{1}{\gamma} + \mu \right) \langle 
			u_{\perp}, 
		v_{\perp} \rangle = \mu \langle u_{\perp}, v_{\perp} \rangle = A(0, 
		u_{\perp}, 0; \tau, v, p).
	\end{align*}
	Now let $\sigma_{\perp}$, $\nu$, and $\lambda$ be as in (iv). Then,
	\begin{align*}
		A(\sigma_{\perp}, \gamma \lambda^{-1} \d^{k-1} \sigma_{\perp}, 0; 
		\tau, v, q) &= \langle 
		\d^{k-1} \sigma_{\perp}, v \rangle + \gamma \lambda^{-1} \langle 
		\d^{k-1} \sigma_{\perp}, 
		\d^{k-1} \tau \rangle - \langle \sigma_{\perp}, \tau \rangle \\
		&= \langle \d^{k-1} \sigma_{\perp}, v_{\perp} \rangle + \gamma 
		\lambda^{-1} 
		\langle \d^{k-1} \sigma_{\perp}, \d^{k-1} \tau \rangle - \langle 
		\sigma_{\perp}, \tau 
		\rangle, 
	\end{align*}
	and
	\begin{align*}
		&B_{\gamma}(\sigma_{\perp}, \gamma \lambda^{-1} 
		\d^{k-1}\sigma_{\perp}, 0; \tau, v, q) \\
		&\qquad = 
		\lambda^{-1} \langle \d^{k-1} \sigma_{\perp}, v - v_{\mathfrak{H}} 
		\rangle + 
		\langle \sigma_{\perp}, \tau \rangle +  \gamma \langle \d^{k-1} 
		\sigma_{\perp}, 
		\d^{k-1} \tau \rangle \\
		&\qquad = \lambda^{-1} \langle \d^{k-1} \sigma_{\perp}, v_{\perp} 
		\rangle + 
		\langle \sigma_{\perp}, \tau \rangle +  \gamma \langle \d^{k-1} 
		\sigma_{\perp}, \d^{k-1} \tau \rangle \\
		&\qquad = \lambda^{-1} \langle \d^{k-1} \sigma_{\perp}, v_{\perp} 
		\rangle + (1 + \nu \gamma) \langle \sigma_{\perp}, \tau \rangle.
	\end{align*}
	Consequently, there holds
	\begin{align*}
		\lambda B_{\gamma}(\sigma_{\perp}, \gamma \lambda^{-1} 
		\d^{k-1}\sigma_{\perp}, 0; \tau, 
		v, q) &= \langle \d^{k-1} \sigma_{\perp}, v_{\perp} \rangle + 
		\lambda \left(1 + \nu \gamma \right) \langle \sigma_{\perp}, \tau 
		\rangle \\
		&= \langle \d^{k-1} \sigma_{\perp}, v_{\perp} \rangle + 
		\nu \gamma \langle \sigma_{\perp}, \tau \rangle \\
		&= \langle \d^{k-1} \sigma_{\perp}, v_{\perp} \rangle +  (\nu \gamma  
		\lambda^{-1} 
		- 1) \langle \sigma_{\perp}, \tau \rangle \\
		&= A(\sigma, \gamma \lambda^{-1} \d^{k-1} \sigma_{\perp}, 0; \tau, v, 
		q).
	\end{align*}
	
	We now verify that (i-iv) contain $\dim V^{k-1} + \dim V^{k} + \dim 
	\mathfrak{H}^k$ linearly independent (L.I.) eigenvectors, and so (i-iv) 
	classify all eigenpairs of \cref{eq:hodge-laplace-evals}. It is immediate 
	that (i) contains $\dim V^{k-1}$ L.I.~eigenfunctions and (ii) contains 
	$\dim 
	\mathfrak{H}^k$ L.I.~eigenfunctions for each value of $\lambda \in \{-1, 
	1\}$. Since 
	\cref{eq:hodge-laplace-u-evals-aux,eq:hodge-laplace-sigma-evals-aux} are 
	well-defined, SPD eigenvalue problems, there are $\dim \mathfrak{Z}^{k, 
		\perp}$ L.I.~eigenfunctions in (iii) and $\dim \mathfrak{Z}^{k-1, 
		\perp}$ 
	L.I.~eigenfunctions in (iv). Thus, there are
	\begin{multline*}
		\dim V^{k-1} + \dim \mathfrak{H}^k + \dim \mathfrak{Z}^{k, \perp} + 
		\dim 
		\mathfrak{Z}^{k-1, \perp} + \dim \mathfrak{H}^k \\ = \dim V^{k-1} + 
		\dim 
		V^{k} + \dim \mathfrak{H}^k
	\end{multline*}
	L.I.~eigenfunctions, which completes the proof.
\end{proof}


\providecommand{\bysame}{\leavevmode\hbox to3em{\hrulefill}\thinspace}
\providecommand{\MR}{\relax\ifhmode\unskip\space\fi MR }
\providecommand{\MRhref}[2]{%
  \href{http://www.ams.org/mathscinet-getitem?mr=#1}{#2}
}
\providecommand{\href}[2]{#2}

\end{document}